\newcommand{\R}{\mathbb{R}}
\newcommand{\mb}{\mathbb}
\newcommand{\abs}[1]{\left| {#1} \right|}
\newcommand{\bpa}[1]{\left( {#1} \right)}
\newcommand{\bbr}[1]{\left[ {#1} \right]}
\newcommand{\var}{\mbox{Var}}
\crefname{hypothesis}{Hypothesis}{Hypotheses}
\title{Adaptive Finite-Difference Interval Estimation for Noisy Derivative-Free Optimization \thanks{Submitted to the editors 10/12/2021.
\funding{Shi and Xie were supported by the Office of Naval Research grant N00014-14-1-0313 P00003. Xuan was supported by the National Science Foundation grant DMS-1620022. Nocedal was supported by AFOSR grant FA95502110084, and by National Science Foundation grant DMS-1620022.}}}
\author{Hao-Jun Michael Shi\thanks{Department of Industrial Engineering and Management Sciences, Northwestern University, Evanston IL (\url{hjmshi@u.northwestern.edu}, \url{ycxie@u.northwestern.edu}, \url{qxuan@u.northwestern.edu}, \url{j-nocedal@northwestern.edu)}
        }
      \and
      Yuchen Xie\footnotemark[2]
      \and
       Melody Qiming Xuan\footnotemark[2]
      \and
       Jorge Nocedal\footnotemark[2]}
\begin{document}

\maketitle

% REQUIRED
\begin{abstract}
A common approach for minimizing a smooth nonlinear function is to employ finite-difference approximations to the gradient. While this can be easily performed when no error is present within the function evaluations, when the function is noisy, the optimal choice requires information about the noise level and higher-order derivatives of the function, which is often unavailable. Given the noise level of the function, we propose a bisection search for finding a finite-difference interval for any finite-difference scheme that balances the \textit{truncation error}, which arises from the error in the Taylor series approximation, and the \textit{measurement error}, which results from noise in the function evaluation. Our procedure produces reliable estimates of the finite-difference interval at low cost without explicitly approximating higher-order derivatives. We show its numerical reliability and accuracy on a set of test problems. When combined with L-BFGS, we obtain a robust method for minimizing noisy black-box functions, as illustrated on a subset of unconstrained CUTEst problems with synthetically added noise. 
\end{abstract}

% REQUIRED
\begin{keywords}
  derivative-free optimization, noisy optimization, zeroth-order optimization, nonlinear optimization, finite differences
\end{keywords}

% REQUIRED
\begin{AMS}
  90C56, 90C53, 90C30 
\end{AMS}

\section{Introduction}
\label{intro}
\setcounter{equation}{0}
A powerful approach for derivative-free optimization is to utilize finite differences to approximate gradients, and to employ these approximations within a known nonlinear optimization method; see e.g., \cite{shi2021numerical}. These methods operate by spending at least $n + 1$ function evaluations at each iteration, where $n$ is the total number of variables. This lies in contrast to interpolation-based methods, which utilize prior function evaluations with only one new evaluation at each iteration; see \cite{conn2009introduction,larson2019derivative}. Therefore, in order for the finite-difference approach to be effective, one must ensure that the quality of the gradient is satisfactory and significant progress is made at each iteration of the optimization algorithm.

Often, black-box functions to be optimized are contaminated by stochastic or computational noise. This noise could arise naturally from modeling randomness within a simulation, or as a bi-product of an adaptive computation, for example through the early termination of an iterative solver. The presence of noise has largely prevented finite-difference methods from gaining more popularity within the derivative-free optimization community, as the choice of the finite-difference interval becomes increasingly critical as the noise level increases.

In this paper, we propose a procedure for computing the finite-difference interval  that is more robust than techniques proposed in the literature. It estimates the finite-difference interval directly by balancing truncation and measurement error, rather than by first computing an estimate of a higher derivative of the function, as is common. The proposed procedure applies to any finite-difference scheme, including first-order forward and central differences, as well as higher-order schemes. 

To outline our procedure, we first note that finite-difference interval estimation requires knowledge of both the noise level and higher-order derivative of the function \cite{more2012estimating}. While the former may be known \emph{a priori} or can be estimated by sampling or computing difference tables \cite{more2011estimating}, the latter quantity is not normally available to the user. 

Let us consider, for example, the problem of estimating the derivative of a smooth univariate function $\phi : \R \rightarrow \R$. Assume that we are only provided noisy function evaluations of the form
\begin{equation}
    f(t) = \phi(t) + \epsilon(t)
\end{equation}
where $\epsilon: \R \rightarrow \R$ models the error. Let us assume for simplicity that the noise is bounded.
The simplest and cheapest finite-difference approximation to the first derivative is the forward-difference approximation. If $\phi^{(d)}$ denotes the $d$-th order derivative of $\phi$, then the forward-difference approximation is computed by
\begin{equation}\label{eq:forward diff}
    \phi^{(1)}(t) \approx \frac{f(t + h) - f(t)}{h} \triangleq f^{(1)}(t; h)
\end{equation}
where $h > 0$ is the finite-difference interval. (Note the slight abuse of notation by denoting $f^{(d)}$ as the finite-difference approximation to the $d$-th order derivative.) With no noise, excluding round-off error, one would ideally choose $h$ as small as possible, the common practical choice being $h = \max\{1, |x|\} \sqrt{\epsilon_M}$, where $\epsilon_M$ is machine precision, to handle rounding errors \cite{berahas2019derivative}. However, this choice is poor under the presence of large errors, as is well-known.

To see this, consider the following decomposition of the error in the forward-difference approximation:
\begin{equation}
    \abs{f^{(1)}(t; h) - \phi^{(1)}(t)} \leq \abs{\frac{\phi(t + h) - \phi(t)}{h} - \phi^{(1)}(t)}  + \abs{\frac{\epsilon(t + h) - \epsilon(t)}{h}}.
\end{equation}
We will call the error induced by the first term \textit{truncation error} since it arises from truncation of the Taylor series, and the error induced by the second term \textit{measurement error} due to error in the function evaluations. 

Note that if $h$ is small, then the truncation error is small but the measurement error may be large. On the other hand, if $h$ is too large, the measurement error may be small but the truncation error may be too large. Therefore, the optimal $h$ trades off these two terms by making the error from each of these two sources equal. The adaptive procedure proposed in this paper for estimating the finite-difference interval in the presence of noise  properly balances these two different sources of error by using a bisection technique. The procedure aims to be: (1) reliable, that is, applicable to most practical problems of interest, and more robust than the techniques proposed in the literature; (2) accurate, producing near-optimal estimates of the finite-difference interval; and (3) efficient, employing the least number of function evaluations possible. 

This paper is organized into five sections. We present the notation and literature review in the rest of this section. In Section \ref{sec:forward diff}, we introduce our finite-difference interval estimation procedure for the forward-difference case. In Section \ref{sec:general fd}, we present the generalized procedure for arbitrary finite-difference schemes and provide theoretical guarantees for the termination of our procedure. Extensive numerical results on synthetic problems with injected noise are provided in Section \ref{sec:experiments}, and concluding remarks are made in Section \ref{sec:final remark}.

\subsection{Literature Review}

The problem of estimating derivatives, particularly in the presence of rounding errors, is a fundamental question within numerical analysis and scientific computing. Fornberg proposed a stable algorithm for generating finite-difference formulas on arbitrarily spaced grids \cite{fornberg1988generation}. Lyness and Moler observed that the Cauchy integral theorem allows one to evaluate the $d$-th derivative of a complex function as a closed complex integral via numerical integration techniques \cite{lyness1967numerical}. This was simplified and extended by Squire and Trapp who observed that one could avoid cancellation error by using complex perturbations in the Taylor expansion, called complex step differentiation \cite{squire1998using}. This has more recently led to extensions of the complex step to evaluating the Hessian by Hare and Srivastava \cite{hare2020applying}. Brekelmans et al. compared design of experiments schemes against standard finite-difference schemes within the stochastic noise regime \cite{brekelmans2005gradient}.

To handle rounding errors, Curtis and Reid describe a heuristic that estimates the truncation and rounding errors using central and forward-difference estimates. The ratio between the two estimates of these errors are used to determine the finite-difference interval \cite{curtis1974choice}. Stepleman and Winarsky use a set of decreasing central-difference intervals. The optimal interval is obtained by the smallest interval that does not violate monotonic decrease in the absolute difference between consecutive central-difference estimates \cite{stepleman1979adaptive}. Gill, Murray, Saunders, and Wright introduced an adaptive procedure for computing forward-difference intervals by utilizing a ratio to determine the second derivative \cite{GillMurrWrig81,GillMurrSaunWrig83}. Their procedure has some similarities with our approach, which we discuss in Section~\ref{subsec:comparison}. Barton proposed an adaptive procedure for handling rounding or multiplicative errors by interpreting the function values as correct up to a fixed number of significant digits and ensuring that at least a certain number of significant digits change from the resulting difference interval \cite{barton1992computing}. Most recently, Mor\'e and Wild proposed a heuristic for estimating the second derivative for determining the forward-difference interval that checks two conditions: (1) if the noise dominates the second-order derivative; and (2) if the forward and backward difference is too large relative to the function values \cite{more2012estimating}. A comparison of the resulting errors between finite-difference and simplex gradients were analyzed in \cite{berahas2019theoretical}.

Incorporating finite differences into optimization methods have also had a long history. Kiefer and Wolfowitz first applied finite differences to stochastic approximation \cite{kiefer1952stochastic}. Kelley developed an implicit filtering BFGS method that utilizes finite differences in the case where noise decays as the iterates converge to the solution \cite{choi2000superlinear,kelley2011implicit}. Berahas et al. proposed a finite-difference L-BFGS method that incorporates \texttt{ECNoise} and a heuristic by Mor\'e and Wild to estimate the second derivative \cite{berahas2019derivative,more2011estimating,more2012estimating}. Most recently, Shi et al. tested finite-difference methods within the unconstrained, least squares, and constrained settings assuming knowledge of the noise level \cite{shi2021numerical}.

\subsection{Notation}

In the following sections, we will use Bachmann-Landau notation liberally. Suppose $f:\R \rightarrow \R^{\geq 0},~ g:\R \rightarrow \R$. We will write $g(h) = \mathcal{O}(f(h))$ if there exist constants $C_1, C_2 > 0$ and $\delta > 0$ such that $C_1 f(h) \leq \abs{g(h)} \leq C_2 f(h)$ for all $\abs{h} \leq \delta$. We write $g(h) = o(f(h))$ if for every $\epsilon > 0$ there exists a $\delta > 0$ such that $\abs{g(h)} \leq \epsilon {f(h)}$ for all $\abs{h} \leq \delta$. Finally, we write $g(h) = O(f(h))$ if there exist constants $C > 0$ and $\delta > 0$ such that $\abs{g(h)} \leq C f(h)$ for all $\abs{h} \leq \delta$. 

We will use $\phi^{(d)}: \R \rightarrow \R$ to denote the $d$-th order derivative of $\phi$. For a given vector $x \in \R^n$, $[x]_i$ denotes the $i$-th component of $x$. Similarly, for a given matrix $A \in \R^n$, $[A]_{ij}$ denotes the $(i, j)$-th entry of $A$. We will use $\|\cdot\|$ to denote the standard Euclidean norm unless otherwise specified.

\section{An Adaptive Forward-Difference Interval Estimation Procedure} \label{sec:forward diff} \ \\
Suppose we are interested in determining the finite-difference interval for the forward-difference approximation of the first derivative of $\phi$. Since the Taylor expansion of the function $\phi$ is given by
\begin{equation*}
    \phi(t + h) = \phi(t) + \phi^{(1)}(t) h + \frac{\phi^{(2)}(t)}{2} h^2 + o(h^2),
\end{equation*}
the total error can be bounded by
\begin{equation}\label{eq:error bound}
    |\phi^{(1)}(t) - f^{(1)}(t; h)| \leq \underbrace{\frac{|\phi^{(2)}(t)| h}{2}}_{T_1} + \underbrace{\frac{2 \epsilon_f}{h}}_{T_2} + o(h),
\end{equation}
where $T_1$ denotes the error arising from truncation of the Taylor series, which we call \textit{truncation error}, and $T_2$ is the error arising from the function evaluations, also called \textit{measurement error}.  
Minimizing the right hand side, ignoring the higher-order terms, leads to the near-optimal interval
\begin{equation} \label{eq:forward opt}
    h^* \approx 2 \sqrt{\frac{\epsilon_f}{|\phi^{(2)}(t)|}}.
\end{equation} 
This formula requires an estimate of the second derivative $|\phi^{(2)}(t)|$.
We now propose a procedure that yields an interval $h = \mathcal{O}\left(\sqrt{\frac{\epsilon_f}{|\phi^{(2)}(t)|}}\right)$ without estimating $|\phi^{(2)}(t)|$ separately. 

Our procedure balances the truncation $T_1$ and measurement error $T_2$ in \eqref{eq:error bound}. To do so, it estimates the ratio between these two errors directly and attempts to find an interval $h$ for which this ratio is close to some constant value.  We claim that the ratio $T_1 / T_2$  can be approximated using noisy function evaluations $f(\cdot)$ and the noise level $\epsilon_f$, through a \textit{testing ratio} such as
\begin{equation}\label{eq:test ratio}
    r(h; f, t, \epsilon_f) = \frac{|f(t + 4h) - 4 f(t + h) + 3 f(t)|}{8 \epsilon_f}.
\end{equation}
Our algorithm achieves this by finding an interval $h$ such that
\begin{equation} \label{eq:interval}
    r(h; f, t, \epsilon_f) \in [r_l, r_u]
\end{equation}
for $r_l > 1$ and $r_u > r_l + 2$.

To see why this procedure works to give us a near-optimal $h$ as in \eqref{eq:forward opt}, note that
\begin{equation}
    \phi(t + 4h) - 4 \phi(t + h) + 3 \phi(t) = 6 \phi^{(2)}(t) h^2 + o(h^2).
\end{equation}
Therefore, if we expand \eqref{eq:test ratio}, we obtain:
\begin{equation}
    r(h; f, t, \epsilon_f) = \left|\frac{3 \phi^{(2)}(t) h^2}{4 \epsilon_f} + \frac{\epsilon(t + 4h) - 4 \epsilon(t + h) + 3 \epsilon(t)}{8 \epsilon_f} + o(h^2) \right|.
\end{equation}
Since $\left|\frac{\epsilon(t + 4h) - 4 \epsilon(t + h) + 3 \epsilon(t)|}{8 \epsilon_f} \right| \leq 1$ by the fact that $|\epsilon(t)| \leq \epsilon_f$ for all $t \in \R$, by  imposing \eqref{eq:interval} and ignoring the $o(h^2)$ term, we approximately have
\begin{equation} \label{eq:forward int0}
    \frac{3 |\phi^{(2)}(t)| h^2}{4 \epsilon_f} \in [r_l - 1, r_u + 1]. 
\end{equation}
Thus, we obtain an interval $h$ such that 
\begin{equation} \label{eq:forward int}
h \in \frac{2}{\sqrt{3}} \sqrt{\frac{\epsilon_f}{|\phi^{(2)}(t)|}} \cdot [\sqrt{r_l - 1}, \sqrt{r_u + 1}]. 
\end{equation}
Therefore, if $r_l$ and $r_u$ are chosen properly, such as $r_l = 1.5$ and $r_u = 6$, we obtain $h \in [\sqrt{0.5}, \sqrt{7}] \cdot \sqrt{\frac{\epsilon_f}{|\phi^{(2)}(t)|}}$, which is the same order as the optimal finite-difference interval \eqref{eq:forward opt}, differing only by a small constant factor. 

To find an $h$ that satisfies \eqref{eq:interval}, we will perform a bisection search on $h$. In particular, if $r(h; f, t, \epsilon_f) < r_l$, then the numerator of the testing ratio is dominated by noise, indicating that $h$ is too small. On the other hand, if $r(h; f, t, \epsilon_f) > r_u$, then the numerator significantly dominates the noise, which implies that $h$ is too large. Our procedure for forward differences is summarized in Algorithm \ref{algo:fd_est_proc}.\\

\begin{algorithm}[H]
\caption{Adaptive Forward-Difference Interval Estimation}
\label{algo:fd_est_proc}

\hspace*{\algorithmicindent} \textbf{Input:} {One-dimensional noisy function $f: \mathbb{R} \to \mathbb{R}$; noise level $\epsilon_f > 0$; lower- and upper-bound $(r_l, r_u) = (1.5, 6)$;}

\hspace*{\algorithmicindent} \textbf{Output:} {Finite-difference interval $h$ such that \eqref{eq:interval} holds.}

\begin{algorithmic}[1]
\STATE{$h \gets \frac{2}{\sqrt{3}} \sqrt{\epsilon_f}$;}
\STATE{$l \gets 0$, $u \gets +\infty$;}
\WHILE{\textbf{True}}
\STATE{Evaluate $r(h; f, t, \epsilon_f) = \frac{\abs{f(t + 4 h) - 4 f(t + h) + 3 f(t)}}{8 \epsilon_f}$;}
\IF{$r(h; f, t, \epsilon_f) < r_l$}
\STATE{$l \gets h$;}
\ELSIF{$r(h; f, t, \epsilon_f) > r_u$}
\STATE{$u \gets h$;}
\ELSE
\STATE{ \textbf{break;} }
\ENDIF
\IF{$u = +\infty$}
\STATE{$ h \gets 4 h$;} \label{line: u}
\ELSIF{$l = 0$}
\STATE{$h \gets h / 4$;} \label{line: l}
\ELSE
\STATE{$h \gets (l+u)/2$;}
\ENDIF
\ENDWHILE
\RETURN $h$
\end{algorithmic}
\end{algorithm}

By scaling $h$ by a factor of 4 in lines~\ref{line: u} and \ref{line: l} in Algorithm~\ref{algo:fd_est_proc}, the new trial $h$ only requires a single new function evaluation to check the testing ratio \eqref{eq:test ratio} when $h$ is updated monotonically.

In addition, the testing ratio is affine-invariant with respect to the function of interest in the sense that $r(h; f, t, \epsilon_f)$ remains unchanged if applied to a modified function $\tilde{f}(t) = a f(t) + b$ for $a \neq 0$ and $b \in \R$ with noise level $|a| \epsilon_f$, i.e. $r(h; \tilde{f}, t, |a| \epsilon_f) = r(h; f, t, \epsilon_f)$. Therefore, the finite-difference interval $h$ will correctly remain unchanged under this transformation.

\subsection{Comparison to Prior Methods} \label{subsec:comparison}

Although the definition of the testing ratio appears similar to the ratio in Gill et al. \cite{GillMurrSaunWrig83}, which is defined as the inverse ratio
\begin{equation}\label{reverse ratio}
    \frac{4 \epsilon_f}{|f(t + \tilde{h}) - 2 f(t) + f(t - \tilde{h})|},
\end{equation}
they use the finite difference interval $\tilde{h}$ to estimate the second derivative $|\phi^{(2)}(t)|$. In contrast, our approach utilizes a bisection search as in Algorithm \ref{algo:fd_est_proc} to find the difference interval $h$ directly without estimating the second derivative. The direct estimation of $h$ avoids the nested estimation as in Gill et al., which can cause larger errors.

We also note that one may reduce the cost of our procedure by reusing prior function evaluations within the bisection search, with an appropriate choice of the scaling factor for $h$. Additionally, one can perform forward differences without additional function evaluations as $f(t+h)$ and $f(t)$ are already evaluated in our testing ratio \eqref{eq:test ratio}. This is an advantage over the procedure of Gill et al. See Section \ref{sec:general fd} for further discussion on this topic.

Our technique also differs from Mor\'e and Wild's procedure \cite{more2012estimating}. Their procedure estimates the second derivative $\phi^{(2)}(t)$ by
\begin{equation} \label{eq:second der}
    \phi^{(2)}(t) \approx \frac{f(t + \tilde{h}) - 2 f(t) + f(t - \tilde{h})}{\tilde{h}^2} \triangleq f^{(2)}(t; \tilde{h}),
\end{equation}
and then inserts this estimate into the optimal formula \eqref{eq:forward int}. The difference interval $\tilde{h}$ is required to satisfy
\begin{align}
    |f(t + \tilde{h}) - 2 f(t) + f(t - \tilde{h})| & \geq \tau_1 \epsilon_f \label{eq:more-wild 1}\\
    |f(t \pm \tilde{h}) - f(t)| & \leq \tau_2 \max\{|f(t)|, |f(t \pm \tilde{h})|\} \label{eq:more-wild 2}
\end{align}
with $\tau_1 \gg 1$ and $\tau_2 \in (0, 1)$. Their method attempts to satisfy this within two trials as follows:
\begin{enumerate}
    \item Set $\tilde{h}_1 = \sqrt[4]{\epsilon_f}$ and compute $\mu_1 = |f^{(2)}(t; \tilde{h}_1)|$ using \eqref{eq:second der}. If conditions \eqref{eq:more-wild 1} and \eqref{eq:more-wild 2} are satisfied for $\tilde{h}_1$, return $\mu_1$. 
    \item Set $\tilde{h}_2 = \sqrt[4]{\epsilon_f / \mu_1}$ and compute $\mu_2 = |f^{(2)}(t; \tilde{h}_2)|$ using \eqref{eq:second der}. If conditions \eqref{eq:more-wild 1} and \eqref{eq:more-wild 2} are satisfied for $\tilde{h}_2$, return $\mu_2$.
    \item If $|\mu_1 - \mu_2| \leq \frac{1}{2} \mu_2$, return $\mu_2$.
\end{enumerate}
If the heuristic is unable to return an estimate $\mu$ of $\phi^{(2)}(t)$ after two trials, this is considered as a failure.

While \eqref{eq:more-wild 1} appears similar to the testing ratio, it is better interpreted as ensuring that noise does not dominate the second-derivative estimation due to the large choice of $\tau_1 = 100$. The second condition \eqref{eq:more-wild 2} is not affine-invariant in the sense that adding a sufficiently large $b$ can force the condition to be satisfied. This is undesirable as perturbations of the function should not change the overall behavior of the method.

\section{Generalized Finite-Difference Interval Estimation} \label{sec:general fd}

Typically, finite-difference interval estimation procedures for numerical optimization focus on forward differences \cite{barton1992computing,GillMurrSaunWrig83,more2012estimating}. However, in the noisy regime, higher-order finite-difference approximations, such as central differences, can yield more accurate approximations; see \cite{shi2021numerical}. As a result, in order to attain the highest possible accuracy, one must design methods that efficiently find a near-optimal difference interval for more general finite-difference schemes. To handle this, we propose a generalization of the forward-difference case, Algorithm \ref{algo:fd_est_proc}, for $d$-th order derivatives.

Consider a finite-difference approximation scheme $S = (w, s)$ defined over $m$ points, where we approximate $\phi^{(d)}(t)$ using the equation
\begin{equation}\label{eq:scheme_def_w_s}
    f_S^{(d)}(t; h) = \frac{\sum_{j = 1}^m w_j \cdot f(t + h s_j)}{h^{d}} \approx \phi^{(d)}(t)
\end{equation}
where $w \in \R^m$ and $s \in \R^m$ are the associated weights and shifts of the finite-difference scheme. As in the forward-difference setting, we will use a slight abuse of notation by denoting the finite-difference approximation as $f^{(d)}_S(t; h)$. The \textit{forward-difference} scheme for approximating the first derivative (i.e., $d = 1$) is obtained by defining $s = (0,1)^T$ and $w = (-1, 1)^T$, while \textit{central-difference} scheme for $d = 1$ is defined by $s = (-1, 1)^T$ and $w = (-\frac{1}{2}, \frac{1}{2})^T$. The standard \textit{second-order central-difference} scheme (for $d=2$) is defined as $s = (-1, 0, 1)^T$ and $w = (1, -2, 1)^T$. 

In order for the finite-difference scheme to be valid, the coefficients $w$ and shifts $s$ must be chosen such that the Taylor expansion of the finite-difference approximation over the function $\phi$ satisfies
\begin{equation}\label{eq:def_error_order_q}
    \sum_{j = 1}^m w_j \cdot \phi(t + h s_j) = \phi^{(d)}(t) h^{d} + c_q \phi^{(q)}(t) h^q + o\bpa{h^{q}}.
\end{equation}
Here $q \geq d + 1$ denotes the order of the remainder term\footnote{Note that the order of accuracy can be higher than $d + 1$ for certain schemes, such as central-difference approximations.}. This ensures that $f_S^{(d)}(t; h) \approx \phi^{(d)}(t)$. In order to guarantee this, the finite-difference scheme $S$ must satisfy
\begin{equation*}
    \frac{1}{l!} \sum_{j = 1}^m w_j s_j^{l} = 
    \begin{cases}
    1 & \mbox{ if } l = d \\
    0 & \mbox{ if } l < q, ~ l \neq d
    \end{cases}
\end{equation*}
and as a result
\begin{equation*}
    c_q = \frac{1}{q!} \sum_{j = 1}^m w_j  s_j^{q}.
\end{equation*}
(See the Appendix \ref{app:fd tables} for more detail on how generic finite-difference schemes are derived.)
Therefore, in the presence of noise, the worst-case error for the finite-difference scheme of interest can be bounded by
\begin{equation*}
    |f^{(d)}(t; h) - \phi^{(d)}(t)| \leq \abs{c_q} \abs{\phi^{(q)}(t)} h^{q - d} + \|w\|_1 \epsilon_f h^{-d} + o\bpa{h^{q - d}}.
\end{equation*}
One can define an approximately optimal choice of $h$:
\begin{equation}\label{eq:h_opt_scheme}
    h^* \approx \abs{\frac{d}{q - d} \cdot \frac{\|w\|_1 \epsilon_f}{c_q \phi^{(q)}(t)}}^{1 / q}.
\end{equation}

While $\epsilon_f$ is assumed to be known and $d$, $q$, $w$ and $c_q$ are available, the $q$-th order derivative $\phi^{(q)}(x)$ is unknown and often difficult to estimate. Following the idea from the forward-difference case, we propose a procedure for estimating \eqref{eq:h_opt_scheme} directly. We first construct a testing ratio $r_S$ associated with scheme $S$:
\begin{equation}\label{eq:general test ratio}
    r_S(h; f, t, \epsilon_f) = \frac{\abs{\sum_{j = 1}^{\tilde{m}} \tilde{w}_j \cdot f(t + h \tilde{s}_j)}}{\epsilon_f}
\end{equation}
where $\tilde{w}, \tilde{s} \in \R^{\tilde{m}}$ where $\tilde{m} \geq q + 1$, $\tilde{s}_j \neq \tilde{s}_k$ for all $j \neq k$, and $\tilde{w}$ and $\tilde{s}$ satisfies
\begin{equation}\label{eq:testing_ratio_taylor}
\sum_{j = 1}^{\tilde{m}} \tilde{w}_j \cdot \phi(t + h \tilde{s}_j) = c_r \phi^{(q)}(t) h^q + o\bpa{h^{q}}, ~~~ c_r = \frac{1}{q!}\sum_{j = 1}^{\tilde{m}} \tilde{w}_j \tilde{s}_j^{q} \neq 0.
\end{equation}
Without loss of generality, we assume that $\tilde{w}$ satisfies $\|\tilde{w}\|_1 = 1$. This can be done by normalizing $\tilde{w}$. We then perform a bisection search to find an interval $h$ that satisfies
\begin{equation}\label{eq:general interval}
    r_S(h; f, t, \epsilon_f) \in [r_l, r_u]
\end{equation}
for some $r_l > 1$ and $r_u > r_l + 2$. The procedure is summarized in
Algorithm \ref{algo:gen_est_proc}. 

\begin{algorithm}[H]
\caption{Adaptive Finite-Difference Interval Estimation}
\label{algo:gen_est_proc}

\hspace*{\algorithmicindent} \textbf{Input:} {One-dimensional noisy function $f: \mathbb{R} \to \mathbb{R}$; noise level $\epsilon_f > 0$; testing ratio $r_S(h; f, t, \epsilon_f)$ for scheme $S$; lower- and upper-bound $r_l$ and $r_u$ satisfying $1 < r_l < r_u - 2$; initial interval $h_0$; scaling factor $\eta > 1$;}

\hspace*{\algorithmicindent} \textbf{Output:}{Finite-difference interval $h$ such that \eqref{eq:general interval} holds.}

\begin{algorithmic}[1]
\STATE{$h \gets h_0$;}
\STATE{$l \gets 0$, $u \gets +\infty$;}
\WHILE{\textbf{True}}
\STATE{Evaluate $r_S(h; f, t, \epsilon_f) $;}
\IF{$r_S(h; f, t, \epsilon_f) < r_l$}
\STATE{$l \gets h$;}
\ELSIF{$r_S(h; f, t, \epsilon_f) > r_u$}
\STATE{$u \gets h$;}
\ELSE
\STATE{ \textbf{break;} }
\ENDIF
\IF{$u = +\infty$}
\STATE{$h \gets \eta h$;}
\ELSIF{$l = 0$}
\STATE{$h \gets h / \eta$;}
\ELSE
\STATE{$h \gets (l+u)/2$;}
\ENDIF
\ENDWHILE
\RETURN $h$
\end{algorithmic}
\end{algorithm}

By \eqref{eq:testing_ratio_taylor}, we can see that
\begin{align}
    r_S(h; f, t, \epsilon_f) & = \abs{\frac{c_r \phi^{(q)}(t) h^q}{\epsilon_f} + \frac{\sum_{j = 1}^{\tilde{m}} \tilde{w}_j \cdot \epsilon(t + h \tilde{s}_j)}{\epsilon_f} + o\bpa{h^q}} \\
    & = \abs{\frac{c_r \phi^{(q)}(t) h^{q}}{\epsilon_f} + \Delta + o\bpa{h^{q}}}, \qquad \Delta \triangleq \frac{\sum_{j = 1}^{\tilde{m}} \tilde{w}_j \cdot \epsilon(t + h \tilde{s}_j)}{\epsilon_f}.
\end{align}
Note that by definition of $\Delta$, $\abs{\Delta} \leq 1$. This is a consequence of the requirement that $\|\tilde{w}\|_1 = 1$ and that $|\epsilon(t)| \leq \epsilon_f$ for all $t \in \R$. Therefore, if we have $r_S(h; f, t, \epsilon_f) \in [r_l, r_u]$ and if we the ignore $o\bpa{h^q}$ term, then we (approximately) have
\begin{equation} \label{eq: ratio interval}
    \abs{\frac{c_r \phi^{(q)}(t) h^q}{\epsilon_f}} \in [r_l - 1, r_u + 1],
\end{equation}
i.e.,
\begin{equation}\label{eq:general opt}
    h \in \bbr{\left(\frac{r_l - 1}{\abs{c_r}} \frac{\epsilon_f}{\abs{\phi^{(q)}(t)}}\right)^{1/q}, \left(\frac{r_u + 1}{\abs{c_r}} \frac{\epsilon_f}{\abs{\phi^{(q)}(t)}} \right)^{1/q}}.
\end{equation}
Note from \eqref{eq:h_opt_scheme} that $h$ has the same dependence on $\epsilon_f$ and $\phi^{(q)}(t)$ as $h^*$. As in the forward-difference case, our algorithm is invariant to affine transformations with respect to the function. \\

\textit{Example 1 (First-Order Central Difference).} Consider the first-order central-difference scheme for approximating the first derivative:
\begin{equation}
    f^{(1)}_S(t; h) = \frac{f(t + h) - f(t - h)}{2h},
\end{equation}
where $s = (-1, 1)^T$ and $w = (-\frac{1}{2}, \frac{1}{2})^T$. The Taylor expansion of the numerator is given as:
\begin{equation*}
    \frac{\phi(t + h) - \phi(t - h)}{2 h} = \phi^{(1)}(t) + \frac{\phi^{(3)}(t) h^2}{6} + o(h^2).
\end{equation*}
The full error of the derivative approximation and the approximate optimal choice of $h$ are:
\begin{equation*}
    \abs{f_S^{(1)}(t; h) - \phi^{(1)}(t)} \leq \frac{\abs{\phi^{(3)}(t)} h^2}{6} + \frac{\epsilon_f}{h} + o(h^2), ~~~~~ h^* \approx \sqrt[3]{\frac{3 \epsilon_f}{\abs{\phi^{(3)}(t)}}}.
\end{equation*}
One example of a valid testing ratio is:
\begin{equation}\label{eq:cd test ratio ex1}
    r_S(h; f, t, \epsilon_f) = \frac{|f(t + 3h) - 3 f(t + h) + 3 f(t - h) - f(t - 3h)|}{8 \epsilon_f}.
\end{equation}

\textit{Example 2 (Second-Order Central Difference).} Consider the second-order central-difference scheme for approximating the second derivative:
\begin{equation}
    f^{(2)}_S(t; h) = \frac{f(t + h) - 2 f(t) + f(t - h)}{h^2},
\end{equation}
where $s = (-1, 0, 1)^T$ and $w = (-\frac{1}{2}, \frac{1}{2})^T$. The Taylor expansion of the numerator is given as:
\begin{equation*}
    \frac{\phi(t + h) - 2 \phi(t) + \phi(t - h)}{h^2} = \phi^{(2)}(t) + \frac{\phi^{(4)}(t) h^2}{24} + o(h^2).
\end{equation*}
The full error of the derivative approximation and the approximate optimal choice of $h$ are:
\begin{equation*}
    \abs{f_S^{(2)}(t; h) - \phi^{(2)}(t)} \leq \frac{\abs{\phi^{(4)}(t)} h^2}{24} + \frac{4 \epsilon_f}{h^2} + o(h^2), ~~~~~ h^* \approx 2 \sqrt[4]{\frac{6 \epsilon_f}{\abs{\phi^{(4)}(t)}}}.
\end{equation*}
One example of a valid testing ratio is:
\begin{equation}\label{eq:cd test ratio ex2}
    r_S(h; f, t, \epsilon_f) = \frac{|f(t + 2h) - 4 f(t + h) + 6 f(t) - 4 f(t - h) + f(t - 2h)|}{16 \epsilon_f}.
\end{equation}

\subsection{Practical Considerations} \label{sec:practical}
We make a few observations geared to making the procedure both efficient and robust.
\medskip

\noindent\textit{I. Choice of $r_l$ and $r_u$.} Ideally, one should choose $r_l$ and $r_u$ such that they are close to the optimal ratio
\begin{equation*}
    r^* = {\frac{d}{q - d}\cdot \abs{\frac{c_r}{c_q}} \cdot \|w\|_1}
\end{equation*}
in order to yield an $h$ that is close to $h^*$ in \eqref{eq:h_opt_scheme}. However, this is not directly possible in the presence of noise, which requires that $1 < r_l < r_u - 2$ in order to ensure finite-termination; see Section \ref{sec:finite term}. Therefore, we ideally want to select $(r_l, r_u)$ sufficiently large such that $1 < r_l < r_u - 2$ and, if possible, such that $r^*$ is logarithmically centered within the interval $[r_l, r_u]$:

\begin{equation} \label{eq:log center}
    \sqrt{r_l r_u} = r^* \iff \log r^* = \frac{\log r_l + \log r_u}{2}
\end{equation}
with 
\begin{equation} \label{eq:ideal rl ru}
    (r_l, r_u) = \left(\frac{r^*}{\beta}, \beta r^* \right) \iff (\log r_l, \log r_u) = (\log r^* - \log \beta, \log r^* + \log \beta)
\end{equation}
for some $\beta > 1$. Note that having \eqref{eq:ideal rl ru} is not always possible since we require that $1 < r_l < r_u - 2$. Therefore, we use the values:
\begin{equation}\label{eq:rl ru}
    r_l = \max \left\{1 + \eta, \frac{r^*}{\beta} \right\}, ~~~~~ r_u = \max \left\{3 (1 + \eta), \beta r^* \right\}
\end{equation}
for some $\eta > 0$. (In our experiments, we set $\eta = 0.1$ and $\beta = 2$.)

Note that when $r_l, r_u > r^*$, the algorithm may overestimate $|\phi^{(q)}(t)|$ and hence underestimate $h$. In order to avoid this in practice, we have found that it is preferable to choose a testing ratio such that the optimal ratio $r^* \geq \beta (1 + \eta)$. This could be done by choosing a different $\tilde{w}$ in the testing ratio. \\
%This could be done by choosing a different testing ratio. 

\noindent\textit{II. Generation of Testing Ratio.} Although many choices of $r_S$ are possible for any finite-difference scheme $S$, it would be useful to have a method for automatically generating valid testing ratios that efficiently utilize function values. A simple yet useful way to construct $r_S$ is through the formula
\begin{equation}\label{eq:gen_ratio}
    r_S^\alpha(h; f_S, t, \epsilon_f) = \frac{\abs{\left(f^{(d)}_S(t; h) - \alpha^{-d} f^{(d)}_{S}(t; \alpha h) \right) h^d}}{A \epsilon_f},
\end{equation}
where $\alpha \neq 1$ and $A$ is computed by normalizing the coefficients such that $\|\tilde{w}\|_1 = 1$ is satisfied. 

This approach is guaranteed to generate a valid testing ratio $r_S$ for any $\alpha \neq 1$ since it cancels out the $\phi^{(d)}(t)$ term in the Taylor expansion, leaving only the relevant higher-order term of order $q$ of interest. In particular, since
\begin{align*}
    \sum_{j = 1}^m w_j \cdot \phi(t + h s_j) & = \phi^{(d)}(t) h^d + c_q \phi^{(q)}(t) h^q + o(h^q) \\
    \sum_{j = 1}^m w_j \cdot \phi(t + \alpha h s_j) & = \phi^{(d)}(t) (\alpha h)^d + c_q \phi^{(q)}(t) (\alpha h)^q + o(h^q),
\end{align*}
we obtain that
\begin{align*}
    \left(f^{(d)}_S(t; h) - \alpha^{-d} f^{(d)}_{S}(t; \alpha h) \right) h^d & = \sum_{j = 1}^m \left(w_j \cdot \phi(t + h s_j) - \alpha^{-d} w_j \cdot \phi(t + \alpha h s_j)\right) \\
    & = c_q (1 - \alpha^{q - d}) \phi^{(q)}(t) h^q + o(h^q),
\end{align*}
which satisfies \eqref{eq:testing_ratio_taylor} with an effective $c_r = c_q (1 - \alpha^{q - d}) / A$ as desired.

With this design of the testing ratio, the function values required for the corresponding finite-difference scheme are already evaluated within the testing ratio. We can therefore obtain the finite-difference approximation using previously computed function values at no additional cost.

A couple of rules of thumb can be applied for choosing $\alpha$. First, one should ideally generate a testing ratio such that there exists a valid $(r_l, r_u)$ such that \eqref{eq:ideal rl ru} can be satisfied. This could be done by selecting larger $\alpha$. Second, one can reuse prior function evaluations within the bisection search when monotonically increasing or decreasing $h$ through specific choices of $\alpha$, namely by choosing $\alpha$ as a power of $\eta$. This was done with $\eta = 4$ in the forward-difference algorithm (Algorithm \ref{algo:fd_est_proc}). \\

\noindent\textit{III. Initialization of $h_0$.} Since the difference interval $h$ that satisfies the procedure is approximately of the form \eqref{eq:general opt}, it is preferable to initialize $h_0 = \mathcal{O}(\epsilon_f^{1/q})$. Two possible choices are $h_0 = \epsilon_f^{1/q}$ or $\left(\frac{d}{q - d} \cdot \frac{\|w\|_1}{\abs{c_q}} \cdot \epsilon_f \right)^{1/q}$. The latter is based on the assumption that $|\phi^{(q)}(t)| \approx 1$. If instead the finite-difference interval is re-estimated within an optimization algorithm, we can initialize $h_0$ as the difference interval $h$ used at the prior iteration of the optimization algorithm.

We observe that on rare occasions a poor initial choice of $h_0$ can result in large error in the derivative approximation. This occurs when the initial choice of $h_0$ is too large to capture the local behavior of the function. Reducing the initial interval $h_0$ resolves this issue. \\

\noindent\textit{IV. Handling of Special Cases.} The Taylor expansion analysis elucidates two possible failure cases for our procedure. In particular, observe that
\begin{equation*}
    r_S(h; f, t, \epsilon_f) = \abs{\frac{c_r \phi^{(q)}(t) h^q}{\epsilon_f} + \Delta + o(h^q)}.
\end{equation*}
If $h$ is large (for example, when the noise level $\epsilon_f$ is high), the higher-order terms $o(h^q)$ can dominate the other terms in the Taylor series expansion. This can yield poor estimates of $h$ even if the condition $r_S(h; f, t, \epsilon_f) \in [r_l, r_u]$ is satisfied. In our numerical experiments, we have not found this to be a common issue.

The more common case is when $\phi^{(q)}(t) \approx 0$. In this case, $r_S$ will be dominated by $\Delta$. In this case, $r_S(h; f, t, \epsilon_f) < r_l$ for all $h$ and $h$ will thus monotonically increase until the maximum number of iterations is reached (which we set \texttt{max\_iter} to 20). This occurs, for example, with any $(q - 1)$-th degree polynomial. In this case, the method provides a warning but does not flag this as a failure. Note that in this case, $h$ is a good choice because letting $h^* \rightarrow \infty$ would allow for infinite reduction in the noise. \\

\noindent\textit{V. Sensitivity to $\epsilon_f$.} In practice, one must either have a priori knowledge of the noise level, or estimate the noise level using an uncertainty quantification technique. In the latter case, it is likely that $\epsilon_f$ is misestimated, resulting in larger error in the derivative. Note that if $\epsilon_f$ is overestimated, the algorithm is still guaranteed to terminate in a finite number of iterations, but may generate a larger interval $h > h^*$. On the other hand, if $\epsilon_f$ is underestimated, the algorithm is not guaranteed to terminate in a finite number of iterations, and, if the algorithm succeeds, will likely underestimate $h < h^*$. \\

\subsection{Finite Termination} \label{sec:finite term}

Next, we prove a finite termination theorem for Algorithm \ref{algo:gen_est_proc}. We start by making the following assumptions:
\begin{assum}
There exists an $\epsilon_f \geq 0$ (called the \textit{noise level} of the function) such that
\begin{equation} \label{eq:bounded-noise}
    |\epsilon(t)| \leq \epsilon_f ~~~~~ \forall t \in \mathbb{R}.
\end{equation}
\end{assum}

\begin{assum}\label{ass:bdd_diff_testing_ratio}
The testing ratio $r_S$ satisfies:
\begin{equation*}
    \abs{r_S(h; \phi, t, \epsilon_f) - r_S(h; f, t, \epsilon_f)} \leq 1, ~~ \forall t \in \mathbb{R}, ~ h > 0.
\end{equation*}
\end{assum}
Recall that the testing ratio is defined by \eqref{eq:general test ratio}. This assumption is satisfied by our requirement that $\|\tilde{w}\|_1 = 1$ and that $|\epsilon(t)| \leq \epsilon_f$.

\begin{assum}\label{ass:suff_fin_term}
$r_S(h; \phi, t, \epsilon_f)$ is continuous with $r_S(0; \phi, t, \epsilon_f) = 0$, and there exists an integer $K \in \mb{N}$ such that
\begin{equation*}
    r_S(2^K h_0; \phi, t, \epsilon_f) \geq r_u - 1.
\end{equation*}
\end{assum}
Notice that since $\phi$ is a continuous function, $r_S(h;\phi, t, \epsilon_f)$ is also continuous by its definition. Assuming that $\epsilon_f > 0$, the requirement that $r_S(0; \phi, t, \epsilon_f) = 0$ is also satisfied by validity of the testing ratio \eqref{eq:testing_ratio_taylor}. The last part of Assumption \ref{ass:suff_fin_term}, while technical, is satisfied, for example, when $\abs{\phi^{(q)}(\xi)} \geq \eta > 0$ for all $\xi \in [\min_j \{t + h \tilde{s}_j\}, \max_j \{t + h \tilde{s}_j\}]$. With these assumptions, we can now show finite termination.

\begin{theorem}
Suppose that Assumptions \ref{ass:bdd_diff_testing_ratio} and \ref{ass:suff_fin_term} are satisfied and that $h\geq 0$. In addition, suppose $r_u$ and $r_l$ are chosen such that $0 < r_l < r_u - 2$. Then, Algorithm \ref{algo:gen_est_proc} will terminate successfully in a finite number of iterations.
\end{theorem}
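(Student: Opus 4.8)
The plan is to split an execution of Algorithm~\ref{algo:gen_est_proc} into two phases---a \emph{bracketing} phase, during which one of the endpoints $l,u$ is still $0$ or $+\infty$ and $h$ is scaled up or down geometrically, and a \emph{bisection} phase, once both endpoints are finite---and to bound the number of iterations in each. Throughout I write $\rho(h):=r_S(h;\phi,t,\epsilon_f)$ for the noise-free testing ratio and $r(h):=r_S(h;f,t,\epsilon_f)$ for the computed one. Assumption~\ref{ass:bdd_diff_testing_ratio} supplies $\abs{\rho(h)-r(h)}\le 1$ for all $h>0$, while Assumption~\ref{ass:suff_fin_term} supplies that $\rho$ is continuous, $\rho(0)=0$, and $\rho(2^K h_0)\ge r_u-1$ for some $K\in\mb{N}$.

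The first step I would isolate is a one-sided transfer between the two ratios. Each time the algorithm executes $l\gets h$ it is because $r(h)<r_l$, so $\rho(h)\le r(h)+1<r_l+1$; each time it executes $u\gets h$ it is because $r(h)>r_u$, so $\rho(h)\ge r(h)-1>r_u-1$. Because the hypothesis gives $r_u-r_l>2$, these two bounds are strictly separated, $r_l+1<r_u-1$; this single inequality is the only place the gap condition $r_u>r_l+2$ enters, and it is what ultimately defeats the noise.

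For the bracketing phase I would argue finiteness directly. If $r(h_0)\in[r_l,r_u]$ the loop exits at once. If $h_0$ is too small, $h$ grows geometrically and after finitely many steps attains the value $2^K h_0$ of Assumption~\ref{ass:suff_fin_term}; there $r(2^K h_0)\ge\rho(2^K h_0)-1\ge r_u-2>r_l$, so the too-small branch is impossible and the algorithm must either succeed or set $u$, at which point $l$ is already finite and a genuine bracket $l<u$ exists. If $h_0$ is too large, $h$ shrinks; since $\rho$ is continuous with $\rho(0)=0$, there is an $h_{\min}>0$ with $\rho(h)<r_u-1$, hence $r(h)<r_u$, for all $h\le h_{\min}$, so once shrinking drives $h\le h_{\min}$ the too-large branch is impossible and the algorithm must either succeed or set $l$, again yielding a finite bracket. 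Each sub-phase uses only finitely many geometric steps.

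The main obstacle is the bisection phase, because $r(\cdot)$ is in general discontinuous---$\epsilon$ is only assumed bounded---so the intermediate value theorem cannot be applied to $r$ to produce a point of $[r_l,r_u]$. Here I would carry the invariant $\rho(l)<r_l+1$ and $\rho(u)>r_u-1$, which holds on entry (by the transfer step applied to the endpoints set during bracketing) and is preserved by every update, since replacing $l$ or $u$ by the midpoint $m$ occurs only when $r(m)<r_l$ or $r(m)>r_u$, which by the transfer step keeps the corresponding bound. Since $\rho$ is continuous on the compact initial bracket it is uniformly continuous there, so there is $\delta>0$ with $\abs{\rho(h)-\rho(h')}<(r_u-1)-(r_l+1)$ whenever $\abs{h-h'}\le\delta$. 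Bisection halves $u-l$ each iteration, so after at most $\lceil\log_2((u-l)/\delta)\rceil$ steps one would have $u-l\le\delta$; the invariant would then force $\rho(u)-\rho(l)>(r_u-1)-(r_l+1)$, contradicting uniform continuity. Hence a successful iteration---$r(m)\in[r_l,r_u]$---must occur before the bracket can shrink that far, which proves finite successful termination.
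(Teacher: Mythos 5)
Your proof is correct and uses essentially the same ingredients as the paper's: the transfer inequality $\abs{\rho(h)-r(h)}\le 1$ yielding the endpoint invariants $\rho(l)<r_l+1$ and $\rho(u)>r_u-1$, Assumption~\ref{ass:suff_fin_term} to force $u$ to become finite, and uniform continuity of $\rho$ on the compact bracket combined with the halving of $u-l$ to rule out an infinite bisection phase. The only difference is presentational: the paper runs the whole argument as a proof by contradiction over a hypothetical infinite execution, whereas you give a direct phase-by-phase bound on the number of iterations, which if anything is slightly more explicit about why the pre-bisection (shrinking) phase must end.
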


\begin{proof}
Assume by contradiction that Algorithm \ref{algo:gen_est_proc} does not terminate finitely. We denote the variables $l, u, h$ used \emph{at the beginning of} the $k$-th iteration of Algorithm \ref{algo:gen_est_proc} as $l_k, u_k, h_k$, respectively. Obviously, we have
\begin{equation*}
    0 \leq l_k \leq h_k \leq u_k, ~ \forall k \in \mb{N},
\end{equation*}
and
\begin{equation*}
    l_k \leq l_{k+1} < u_{k+1} \leq u_k , ~ \forall k \in \mb{N}.
\end{equation*}

First, we show that $r_S(l_k; \phi, t, \epsilon_f) < r_l + 1$ for all $k \in \mb{N}$, by induction on $k$. Clearly this is true for $k=0$ since $l_0 = 0$, and we have $r_S(0; \phi, t, \epsilon_f) = 0$ by Assumption \ref{ass:suff_fin_term}. Suppose the statement holds for $k \leq K$. We have two cases: (1) $r_S(h_K; f, t, \epsilon_f) < r_l$, which by Assumption \ref{ass:bdd_diff_testing_ratio} implies $r_S(h_K; \phi, t, \epsilon_f) \leq r_S(h_K; f, t, \epsilon_f) + 1 < r_l + 1$. In this case $l_{K+1} = h_K$, so $r_S(l_{K+1}; \phi, t, \epsilon_f) = r_S(h_K; \phi, t, \epsilon_f) < r_l + 1$. (2) $r_S(h_K; f, t, \epsilon_f) > r_u$, in which case $l_{K+1} = l_{K}$ so by the induction hypothesis $r_S(l_{K+1}; \phi, t, \epsilon_f) = r_S(l_{K}; \phi, t, \epsilon_f) < r_l + 1$. Therefore the induction hypothesis holds for $(K+1)$-th iteration.

By a similar argument, we can show that either $u_k = +\infty$, or $u_k < +\infty$ and $r_S(u_k; \phi, t, \epsilon_f) > r_u - 1$ for all $k \in \mb{N}$.

In summary, we can show that for all $k \in \mb{N}$, we have
\begin{align}
    \text{either}~~& r_S(l_{k}; \phi, t, \epsilon_f) < r_l + 1 < r_u - 1 < r_S(u_{k}; \phi, t, \epsilon_f), \\ 
    ~~\text{or}~~& r_S(l_{k}; \phi, t, \epsilon_f) < r_l + 1 ~~\text{and}~~u_k = +\infty.
\end{align}

Next, we claim that there exists $K_1 \in \mb{N}$ such that $u_k < +\infty$ for $k \geq K_1$. Suppose this is not the case, then we have $r_S(h_k; f, t, \epsilon_f) < r_l, ~\forall k \in \mb{N}$. In this case, we have $h_{k+1} = 2l_{k+1} = 2h_k$, so $h_k = 2^k h_0$ for all $k \in \mb{N}$. By Assumption \ref{ass:suff_fin_term}, there exists $K \in \mb{N}$ such that $r_S(h_K; \phi, t, \epsilon_f) \geq r_u - 1$, and since $r_S(h_K; f, t, \epsilon_f) \geq r_S(h_K; \phi, t, \epsilon_f) - 1$, we have $r_S(h_K; f, t, \epsilon_f) \geq r_u - 2 > r_l$, contradicting the inequality $r_S(h_k; f, t, \epsilon_f) < r_l, ~\forall k \in \mb{N}$. This proves the existence of $K_1$.

We are now ready to present the contradiction. For $k \geq K_1$, since $u_k < \infty$, we have
\begin{equation*}
    u_{k+1} - l_{k+1} = \frac{1}{2}\bpa{u_k - l_k}
\end{equation*}
This implies that $u_k - l_k \to 0$. Since $r_S(h; \phi, t, \epsilon_f)$ (as a function of $h$) is continuous and $u_{K_1} < +\infty$, $[0, u_{K_1}]$ is compact so $r_S(h; \phi, t, \epsilon_f)$ (as a function of $h$) is \emph{uniformly continuous} on $[0, u_{K_1}]$. Note that $l_k, u_k \in [0, u_{K_1}]$ for $k \geq {K_1}$, therefore we have
\begin{equation*}
    r_S(u_k; \phi, t, \epsilon_f) - r_S(l_k; \phi, t, \epsilon_f) \to 0
\end{equation*}
This contradicts the fact that 
\begin{equation*}
    r_S(l_{k}; \phi, t, \epsilon_f) < r_l + 1 < r_u - 1 < r_S(u_{k}; \phi, t, \epsilon_f), ~ \forall k \in \mb{N}, ~ k\geq K_1
\end{equation*}
Therefore, Algorithm \ref{algo:gen_est_proc} must terminate finitely. Clearly, whenever it terminates, the output $h_R$ must satisfy 
\begin{equation*}
    r_S(h_R; f, t, \epsilon_f) \in [r_l, r_u].
\end{equation*}
%This finishes the proof.
\end{proof}

\section{Numerical Experiments} \label{sec:experiments}

In this section, we present numerical results dem\hyp{}onstrating the reliability of our finite-difference interval estimation procedure. We first utilize the method for computing first and second derivatives ($d=1,2$) of commonly tested functions, with added noise. We then insert our procedure into a standard L-BFGS implementation and illustrate its performance on a subset of synthetic noisy \texttt{CUTEst} problems \cite{gould2015cutest}. All methods were implemented in Python~3.

\subsection{Finite-Difference Interval Estimation} 

We begin by testing our proposed procedure on several univariate functions. We focus on the case where $d=1$ and $2$ as this is most relevant to optimization. We  test Algorithm \ref{algo:gen_est_proc} using 6 different estimating schemes, shown in Table \ref{tab:schemes_tested}. The testing ratios are generated using formula \eqref{eq:gen_ratio} with different choices of $\alpha$. The $\alpha$ for each scheme is chosen as the smallest integer such that $r^* > \beta = 2$.

\begin{table}[tbhp]
{\footnotesize
  \caption{Schemes for approximating the $d$-th order derivative used in the experiments. The scheme is defined by $S = (w, s)$ as in \eqref{eq:scheme_def_w_s}; $q$ is defined in \eqref{eq:def_error_order_q}.}\label{tab:schemes_tested}
\begin{center}
\begin{tabular}{cccccccl} \hline
label & $d$ & $s$ & $w$ & $q$ & $\alpha$ & $r^*$ & Comment \\ \hline
	\texttt{FD} & 1 & $(0, 1)$ & $(-1, 1)$ & 2 & $4$ & $3$ & FD \\
	\texttt{CD} & 1 & $(-1, 1)$ & $(-1/2, 1/2)$ & 3 & 3 & $3$ & CD \\
	\texttt{FD\_3P} & 1 & $(0, 1, 2)$ & $(-3/2, 2, -1/2)$ & 3 & 3 & $3.69$ & FD w/ 3 points \\
	\texttt{FD\_4P} & 1 & $(0, 1, 2, 3)$ & $(-11 / 6, 3, - 3 / 2, 1 / 1)$ & 4 & 3 & $8.25$ & FD w/ 4 points \\
	\texttt{CD\_4P} & 1 & $(-2, -1, 1, 2)$ & $(1 / 12, -2 / 3, 2 / 3, -1 / 12)$ & 5 & 2 & $2.5$ & CD w/ 4 points \\
	\texttt{L2\_CD} & 2 & $(-1, 0, 1)$ & $(1, -2, 1)$ & 4 & 2 & $3$ & 2nd-order CD \\ \hline
\end{tabular}
\end{center}
}
\label{tab:schemes}
\end{table}

For a specific testing function $\phi$ at point $t$, with noise $\epsilon_f$, and  scheme $S = (w, s)$, we define the \emph{worst case relative error} induced by $h$, as:
\begin{equation} \label{eq:worst rel error}
	\delta_S(h; \phi, t, \epsilon_f) = \frac{1}{\abs{\phi^{(d)}(t)}} \bbr{\abs{\frac{\sum_{j=1}^p w_j \phi(t + s_j h)}{h^d} - \phi^{(d)}(t)} + \|w\|_1 \frac{\epsilon_f}{h^d}}.
\end{equation}
This function captures the worst case relative error of the estimation scheme $S$ on the noisy function $f$ at $t$, for a given differencing interval $h$. The value of $h$ that minimizes $\delta_S(h; \phi, t, \epsilon_f)$ is the optimal $h$. Notice that $\delta_S(h; \phi, t, \epsilon_f)$ is a deterministic function that does not rely on the realization of actual noise in $f(t)$.

In the experiments reported below, we manually inject uniformly distributed, stochastic noise into $\phi$, 
\begin{equation*}
	f(t) = \phi(t) + \epsilon(t), ~~ \epsilon(t) \sim \text{Uniform}(-\epsilon_f, \epsilon_f),
\end{equation*}
independent of all other quantities.
We let $h_\dagger$ denote the output of Algorithm~\ref{algo:gen_est_proc}.

\subsubsection{Robustness to Different Noise Levels}

We test our adaptive procedure for both forward and central differences ({\tt FD, CD}) on the simple function $\phi(t) = \cos(t)$ for different noise levels, at $t = 1.0$. The plots of the worst case relative error $\delta_S(h; \phi, t, \epsilon_f)$ and the obtained intervals $h_{\dagger}$ are illustrated in Figure \ref{fig:cos_func_noise_level} for a range of noise levels $\epsilon_f$. The dots represent the worst case relative error $\delta_S(h; \phi, t, \epsilon_f)$ at $h_\dagger$. Our results demonstrate that our method performs consistently well across a range of noise levels, since $h_{\dagger}$ approximately minimizes $\delta_S(h; \phi, t, \epsilon_f)$ in all cases. Results for
all other finite-difference schemes listed in Table \ref{tab:schemes_tested} are reported the Appendix \ref{app:experiments}.

\begin{figure}[tbhp]
    \centering
    \includegraphics[width=0.49\linewidth]{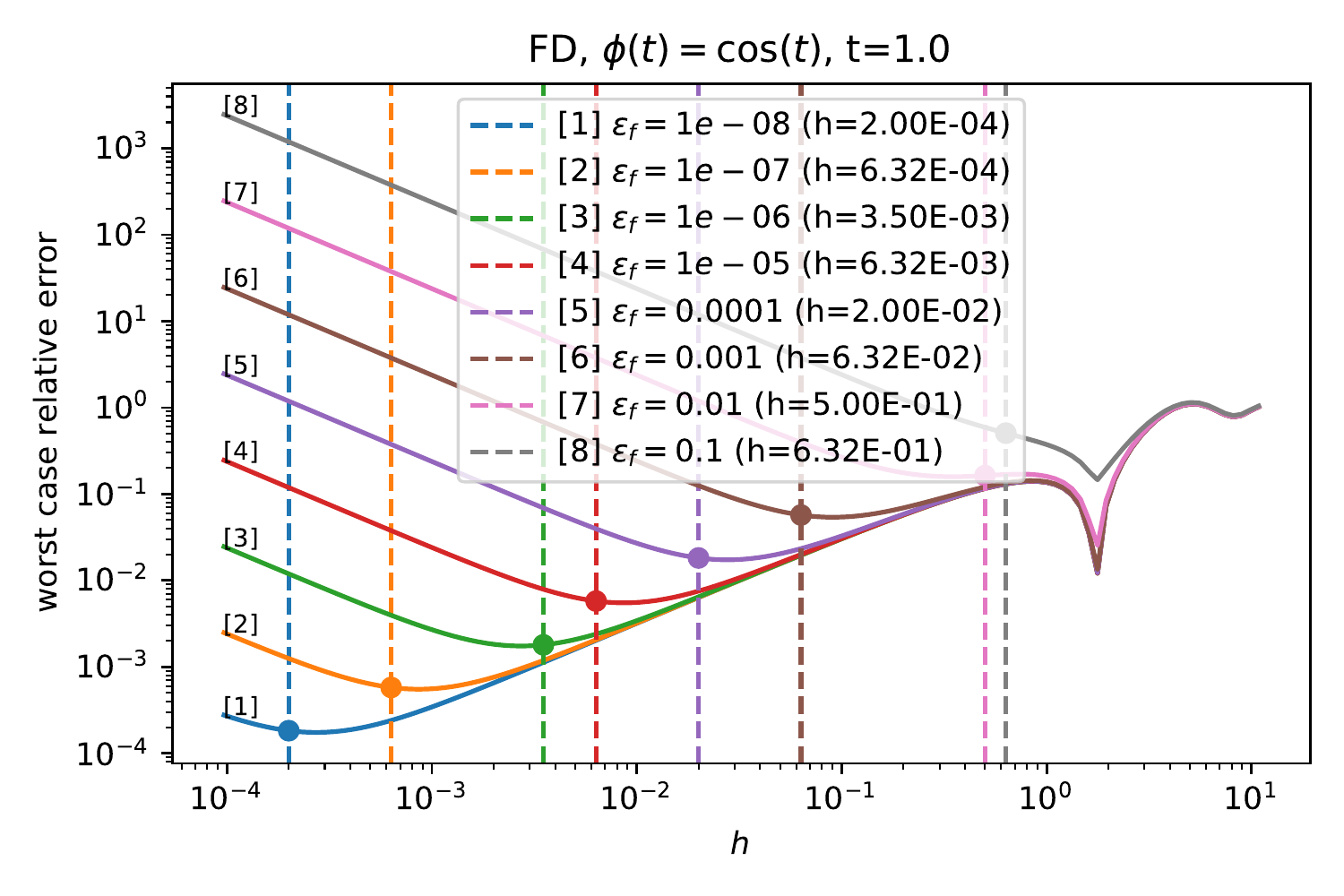}
	\includegraphics[width=0.49\linewidth]{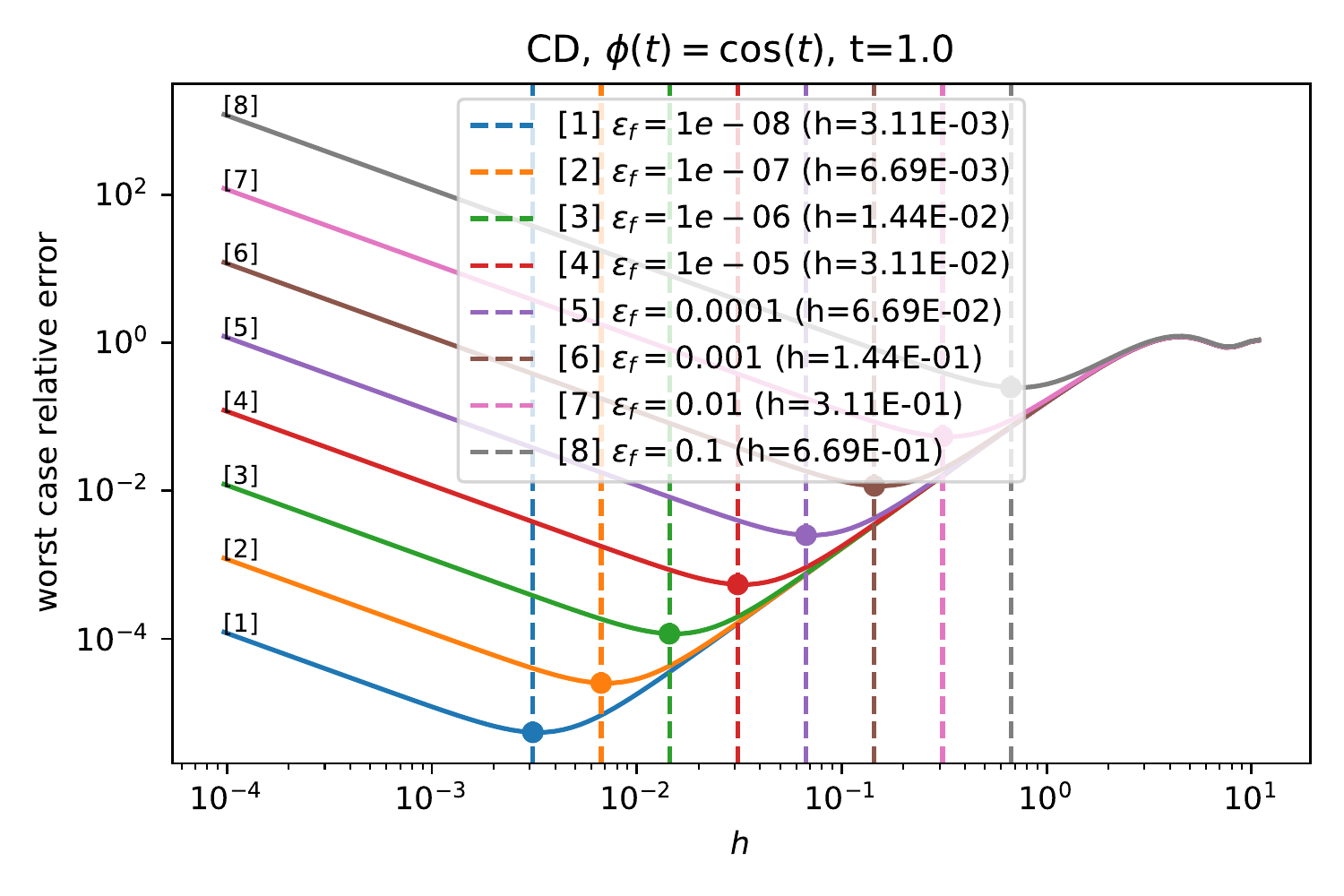}
    \\
    \caption{Worst Case Relative Error Plot for Forward Differences (Left) and Central Difference (Right). We plot $\delta_S(h; \phi, t, \epsilon_f)$ against $h$ on function $\phi(t) = \cos(t)$, at $t = 1$, for $\epsilon_f$ varying from $10^{-8}$ to $10^{-1}$. Each vertical dashed line represents the $h_{\dagger}$ output by Algorithm \ref{algo:gen_est_proc}. For each curve, the dots represent $\delta_S(h; \phi, t, \epsilon_f)$ evaluated at $h = h_{\dagger}$. Note how close are the dots to the minimum values of the functions $\delta_S$.}
    \label{fig:cos_func_noise_level}
\end{figure}

\subsubsection{Difficult and Special Examples}

In this subsection, we consider the following difficult functions studied in \cite{GillMurrSaunWrig83} and \cite{shi2021numerical}.
\begin{enumerate}
	\item $\phi(t) = \bpa{e^t - 1}^2$, at $t = -8$. This function has extremely small first and second-order derivatives at $t = -8$, but quickly increases as $t$ increases beyond $t = 0$; a naive choice of $h = \sqrt{\epsilon_f / |\phi^{(2)} (t)|}$ for forward differences can result in an extremely large $h$ and lead to huge error. 
	\item $\phi(t) = e^{100 t}$, at $t = 0.01$. This function is difficult because its higher-order derivatives increase rapidly. This can easily lead to inaccurate higher-order approximations, especially when $h$ is chosen to be large. 
	\item $\phi(t) = t^4 + 3 t^2 - 10 t$, at $t = 0.99999$. This function is considered difficult because $\phi^\prime(1) = 0$, and represents a case where the estimated derivative is very close to $0$. In addition, this function is a fourth-order polynomial, so the optimal $h$ for \texttt{CD\_4P} is $+\infty$. 
	\item $\phi(t) = 10000 t^3 + 0.01 t^2 + 5 t$, at $t = 10^{-9}$. This function has approximate central symmetry at $t = 0$, which can lead to issues for adaptive procedures such as those proposed in \cite{GillMurrSaunWrig83}, because a near-optimal interval may fail to satisfy the test.
\end{enumerate}

For each example, we fix $\epsilon_f = 10^{-3}$, and perform our estimation procedure for different schemes to obtain $h_\dagger$. Again, we plot the worst case relative error $\delta_S(h; \phi, t, \epsilon_f)$ and the interval $h_\dagger$. The dots denote the error $\delta_S(h; \phi, t, \epsilon_f)$ at $h_\dagger$. The results can be found in Figure \ref{fig:special}. As demonstrated by the plots, our procedure is able to produce a near-optimal $h_\dagger$ that approximately minimizes $\delta_S(h; \phi, t, \epsilon_f)$ across all estimating schemes for the selected challenging problems.

For the first two examples, we see that our procedure is able to estimate the derivative well even when the function increases rapidly. These are cases where using our adaptive procedure is significantly more effective than computing an interval based on higher-order derivative information at the point of interest, as observed in \cite{shi2021numerical}. 

It is also interesting to observe the results for the two other examples. For $\phi(t) = t^4 + 3 t^2 - 10 t$ and scheme \texttt{CD\_4P}, our procedure generates a large $h_\dagger$; this is consistent with the fact that scheme \texttt{CD\_4P} has $q = 5$, and $\phi^{(5)}(\xi) = 0$ for all $\xi$ on this example, which implies that we should choose $h$ to be as large as possible. This similarly holds true for the schemes \texttt{FD\_4P} and \texttt{CD\_4P} on the function $\phi(t) = 10000 t^3 + 0.01 t^2 + 5 t$. While in theory we should choose $h = \infty$ in such cases, we can observe in Figure \ref{fig:special} that this is not the case. When plotting the worst-case relative error, we see that there exists a large $h$ such that $\delta_S(h; \phi, t, \epsilon_f)$ is minimized, beyond which the relative error begins to sharply increase in a discontinuous manner. This phenomenon is due to round-off error. When $h$ becomes too large, round-off error, which is approximately on the order of $\max_j \abs{\phi(t + s_j h)} \epsilon_M$, will dominate $\epsilon_f$.

\begin{figure}[tbhp]
    \centering
    \includegraphics[width=0.49\linewidth]{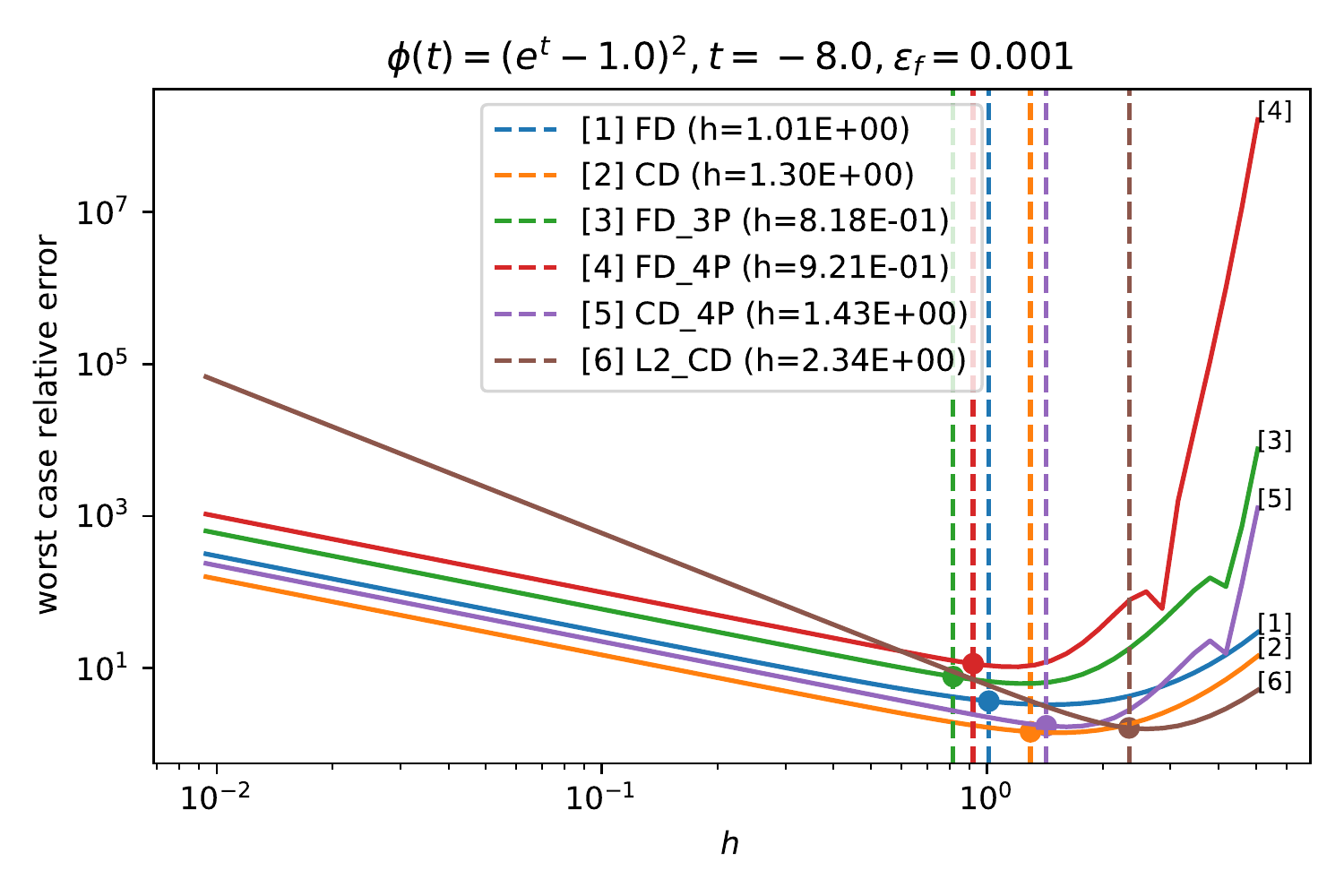}
	\includegraphics[width=0.49\linewidth]{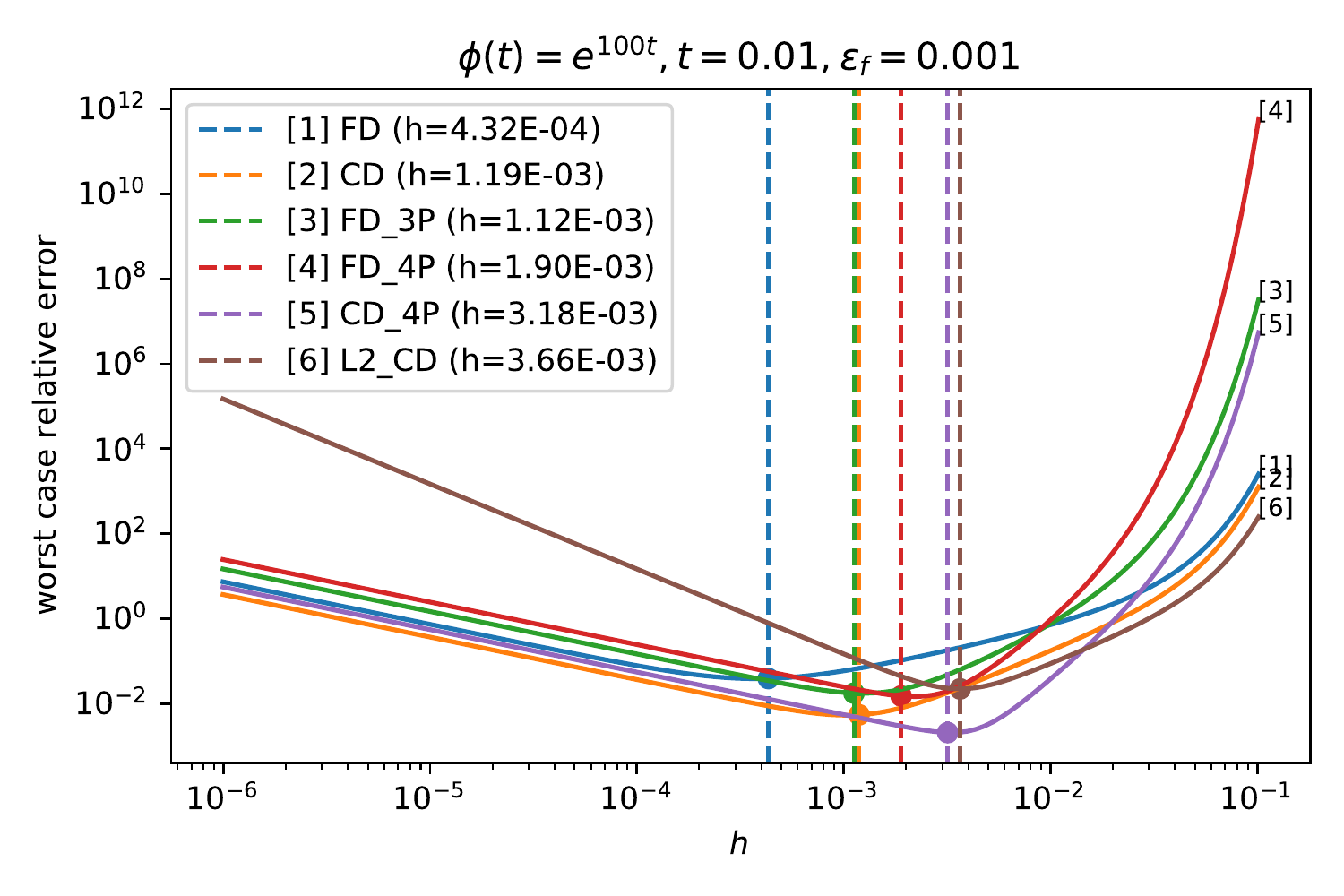}
    \\
    \includegraphics[width=0.49\linewidth]{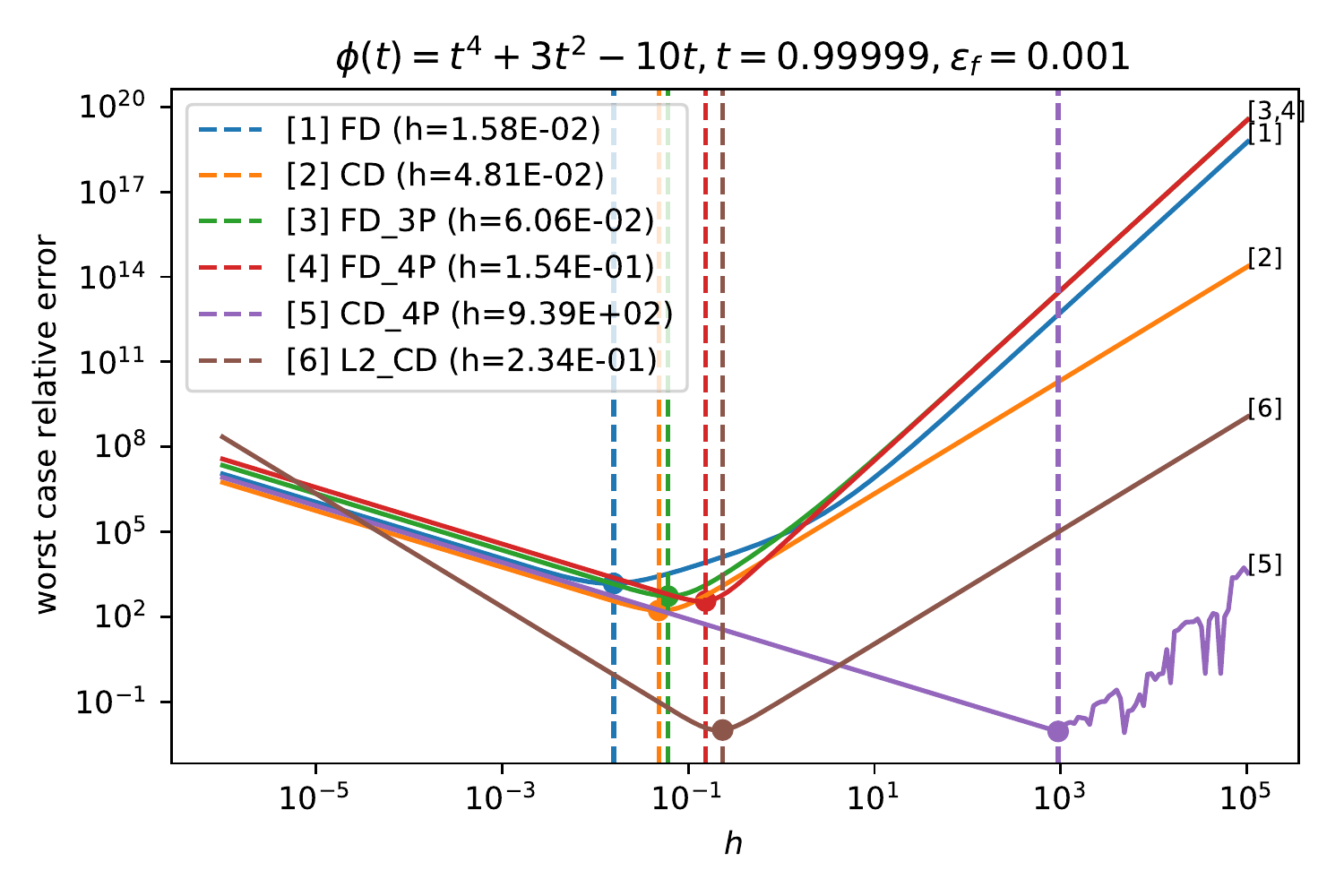}
	\includegraphics[width=0.49\linewidth]{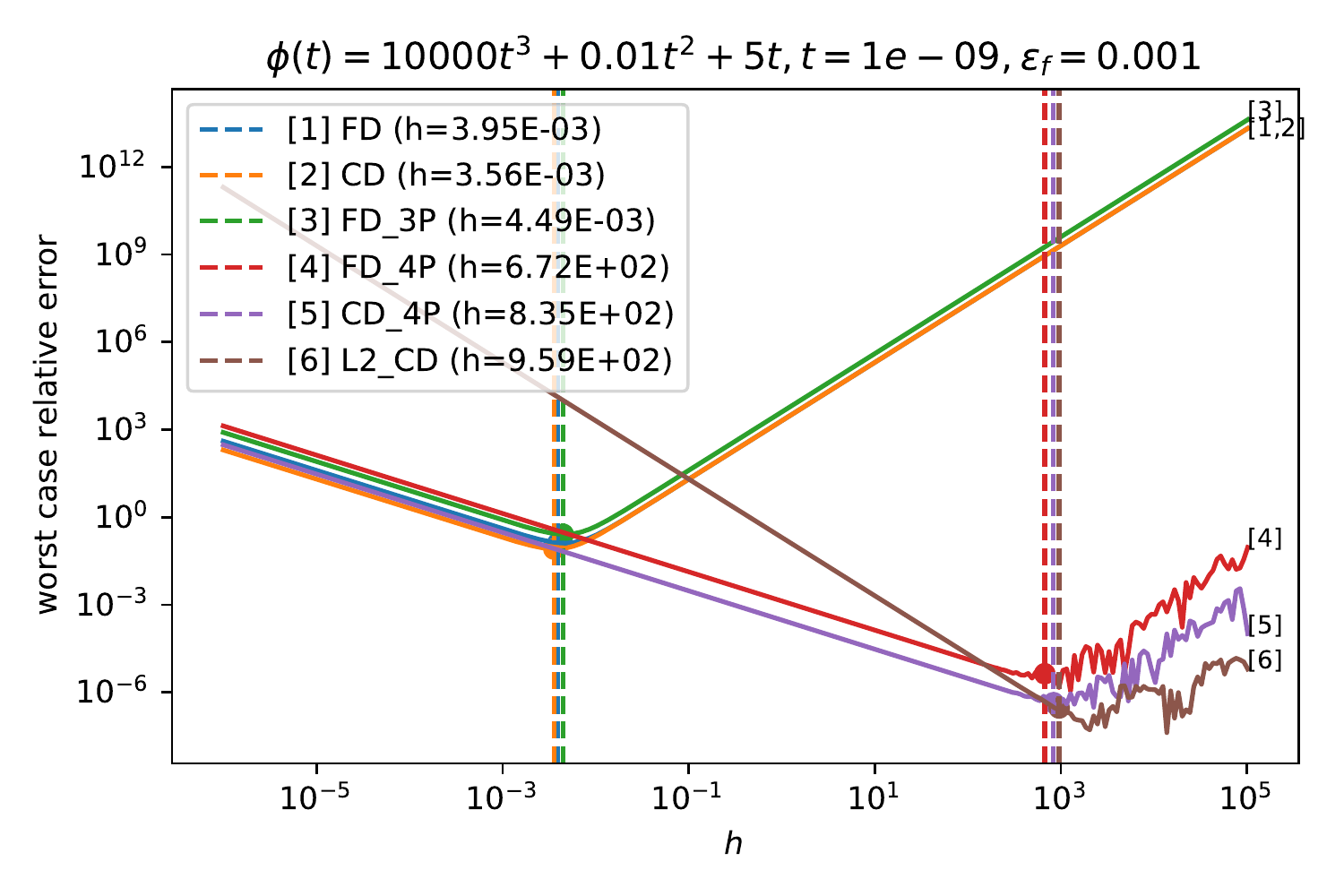}
	\\
	\caption{Worst Case Relative Error Plot for 4 Difficult Examples with $\epsilon_f = 10^{-3}$. We plot $\delta_S(h; \phi, t, \epsilon_f)$ against $h$ on 4 selected functions, for the 6 schemes in Table \ref{tab:schemes}. Each vertical dashed line represents the $h_{\dagger}$ output by Algorithm \ref{algo:gen_est_proc}. For each curve, the dots represent $\delta_S(h; \phi, t, \epsilon_f)$ at $h = h_{\dagger}$ for the corresponding estimating scheme. Note how close are the dots to the minimum values of the functions $\delta_S$. }
    \label{fig:special}
\end{figure}
 
\subsection{Finite-Difference L-BFGS}

We implemented our procedure within the L-BFGS method. We perform  tests on a subset of \texttt{CUTEst} problems \cite{gould2015cutest} detailed in Table \ref{tab:cutest probs}, injecting noise as before, with $\epsilon_f \in \{10^{-1}, 10^{-3}, 10^{-5}, 10^{-7}\}$.
We assume that the noise level $\epsilon_f$ is known to the algorithm and requires no function evaluations.

\begin{table}[tbhp]
	{\footnotesize
	\caption{Subset of unconstrained \texttt{CUTEst} problems and their problem dimensions \cite{gould2015cutest}.}
    \label{tab:cutest probs}
    \begin{center}
	\begin{tabular}{lrlrlrlr}
		\hline
        Problem & Dim ($n$) & Problem & Dim ($n$) & Problem & Dim ($n$) & Problem & Dim ($n$)\\
        \hline
        \texttt{AIRCRFTB} & 5 & \texttt{CRAGGLVY} & 100 & \texttt{FREUROTH} & 100 & \texttt{PFIT4LS} & 3 \\
        \texttt{ALLINITU} & 4 & \texttt{CUBE} & 2 & \texttt{GENROSE} & 100 & \texttt{QUARTC} & 100 \\
        \texttt{ARWHEAD} & 100 & \texttt{DENSCHND} & 3 & \texttt{GULF} & 3 & \texttt{SINEVAL} & 2 \\
        \texttt{BARD} & 3 & \texttt{DENSCHNE} & 3 & \texttt{HAIRY} & 2 & \texttt{SINQUAD} & 100 \\
        \texttt{BDQRTIC} & 100 & \texttt{DIXMAANH} & 90 & \texttt{HELIX} & 3 & \texttt{SISSER} & 2 \\
        \texttt{BIGGS3} & 3 & \texttt{DQRTIC} & 100 & \texttt{NCB20B} & 100 & \texttt{SPARSQUR} & 100 \\
        \texttt{BIGGS5} & 5 & \texttt{EDENSCH} & 36 & \texttt{NONDIA} & 100 & \texttt{TOINTGSS} & 100 \\
        \texttt{BIGGS6} & 6 & \texttt{EIGENALS} & 110 & \texttt{NONDQUAR} & 100 & \texttt{TQUARTIC} & 100\\
        \texttt{BOX2} & 2 & \texttt{EIGENBLS} & 110 & \texttt{OSBORNEA} & 5 & \texttt{TRIDIA} & 100 \\
        \texttt{BOX3} & 3 & \texttt{EIGENCLS} & 30 &  \texttt{OSBORNEB} & 11 & \texttt{WATSON} & 31 \\
        \texttt{BRKMCC} & 2 & \texttt{ENGVAL1} & 100 &  \texttt{PENALTY1} & 100 & \texttt{WOODS} & 100 \\
        \texttt{BROWNAL} & 100 & \texttt{EXPFIT} & 2 & \texttt{PFIT1LS} & 3 & \texttt{ZANGWIL2} & 2 \\
        \texttt{BROWNDEN} & 4 & \texttt{FLETCBV3} & 100 & \texttt{PFIT2LS} & 3 \\
        \texttt{CLIFF} & 2 & \texttt{FLETCHBV} & 100 & \texttt{PFIT3LS} & 3 \\
        \hline
    \end{tabular}
    \end{center}
    }
\end{table}

The L-BFGS method has the form
\begin{equation}
    x_{k + 1} = x_k - \alpha_k H_k g(x_k),
\end{equation}
where $g(x_k)$ is a finite-difference approximation to the gradient, $H_k$ is the L-BFGS matrix with memory of 10 (see \cite{mybook}), and  $\alpha_k$ is a steplength selected by a relaxed Armijo-Wolfe line search designed to handle noise. 

To describe the line search, let $\alpha_k^j$ denote the $j$th trial steplength at iteration $k$. Similar to Shi et al.\cite{shi2020noise}, the Armijo condition is relaxed as follows:
\begin{equation}
    f(x_k + \alpha_k^j p_k)
    \begin{cases}
    \leq f(x_k) + c_1 \alpha_k^j g(x_k)^T p_k & \mbox{ if } j = 0, \, g(x_k)^T p_k < - \epsilon_g(x_k) \|p_k\| \\
    \leq f(x_k) + c_1 \alpha_k^j g(x_k)^T p_k + 2 \epsilon_f & \mbox{ if } j \geq 1,  \, g(x_k)^T p_k < - \epsilon_g(x_k) \|p_k\| \\
    < f(x_k) & \mbox{ if } g(x_k)^T p_k \geq - \epsilon_g(x_k) \|p_k\|,
    \end{cases}
\end{equation}
where $c_1 = 10^{-4}$ and $\epsilon_g(x_k)$ is the estimated gradient error described below. Thus, we relax the line search only when the gradient is reliable; otherwise, we enforce simple decrease. We test the Wolfe condition,
\begin{equation}
    \nabla f(x_k + \alpha_k^j p_k)^Tp_k \geq c_2  \nabla f(x_k)^Tp_k, \qquad c_2 = 0.9,
\end{equation}
by estimating  directional derivatives using finite differences along $p_k$, as in \cite{shi2020noise}.

\subsubsection{Forward Differences}

In the first set of experiments, the gradient approximation $g(x_k)$ is obtained by forward differences, 
\[   [g(x_k)]_i  = \frac{f(x + h_i e_i) - f(x)}{h_i} , \quad i=1, \ldots n,
\]
where  $h_i$ is determined by one of the following three strategies.

\medskip
1. {\tt Fixed}. The interval $h$ is fixed across all components $i$ and all iterations. This strategy tries to emulate the common practice of hand-tuning $h_i$ at the start using  problem specific information. We simulate this using the formula
\begin{equation}
h = 2 \sqrt{\frac{\epsilon_f}{L_2}}, ~~~~~ \mbox{where } L_2 = \max\left\{10^{-1}, \sqrt{\frac{\sum_{i = 1}^n [\nabla^2 \phi(x_0)]_{ii}^2}{n}}\right\},
\end{equation}
which assumes that the diagonals of the Hessian are known.
The gradient error is approximated assuming $L_2$ is correct, that is, $\epsilon_g(x) = 2 \sqrt{n L_2 \epsilon_f}$. We created this idealized option for benchmarking purposes only.

\medskip
2. {\tt MW}. The Mor\'e-Wild heuristic for estimating and interval $h_i$ for every component $i$ of the gradient \cite{more2012estimating}. We set $L_2 = \max\{10^{-1}, \hat{L}_2 \}$, where $\hat{L}_2$ is the estimate given by the Mor\'e and Wild heuristic. If the heuristic fails, we set $L_2 = 10^{-1}$. We employ the maximum of $10^{-1}$ and $\hat{L}_2$ to safeguard against failures caused by small $L_2$ estimations. Other threshold values are possible but we have found $10^{-1}$ to work the best for this set of test problems. The gradient error is estimated similar to {\tt Fixed} but componentwise, i.e., $\epsilon_g(x) = 2 \sqrt{\epsilon_f \sum_{i = 1}^n L_{2, i}}$.

\medskip
3. {\tt Adaptive} Our adaptive procedure for estimating $h_i$ along each component using Algorithm~\ref{algo:fd_est_proc}.

\medskip
We  chose not to compare against Gill et al. \cite{GillMurrSaunWrig83} as we regard the Mor\'e-Wild ({\tt MW}) heuristic to be an improvement over their approach. For the {\tt MW} and {\tt Adaptive} strategies, we  re-estimate the second derivative or finite-difference interval whenever a partial derivative needs to be approximated. For example, when computing the full gradient, we  estimate the finite-difference interval along each coordinate direction separately. 

We present results for a few representative problems in Figure \ref{fig:fd l-bfgs}. The L-BFGS method described above is terminated if no further progress is made on the objective function over 5 consecutive iterations. Figure \ref{fig:fd l-bfgs} plots the optimality gap $\phi(x_k) - \phi^*$ against the number of function evaluations. The optimal value $\phi^*$ is obtained by solving the original problem to completion without noise with L-BFGS. 

While we found the {\tt MW} strategy to work well for $\epsilon_f < 10^{-1}$, this heuristic fails frequently for the case where $\epsilon_f = 10^{-1}$. (This can be seen in the complete results presented in the Appendix \ref{app:experiments}.) For this reason, we report results for $\epsilon_f = 10^{-1}$ and $10^{-5}$, to demonstrate the robustness of our algorithm compared to {\tt MW} heuristic, for different noise levels. When the {\tt MW} heuristic succeeds, we observe that our algorithm ({\tt Adaptive})  is able to more efficiently achieve comparable accuracy to the {\tt MW} heuristic, while attaining more accurate solutions than using a fixed interval for some problems. The lack of accuracy in the {\tt Fixed} strategy can be explained by the inability for a fixed interval to adapt to changes in the Hessian over the course of the iteration --- an exception being the TRIDIA problem, which is very well scaled.

\begin{figure}[tbhp]
    \centering
    \includegraphics[width=0.24\textwidth]{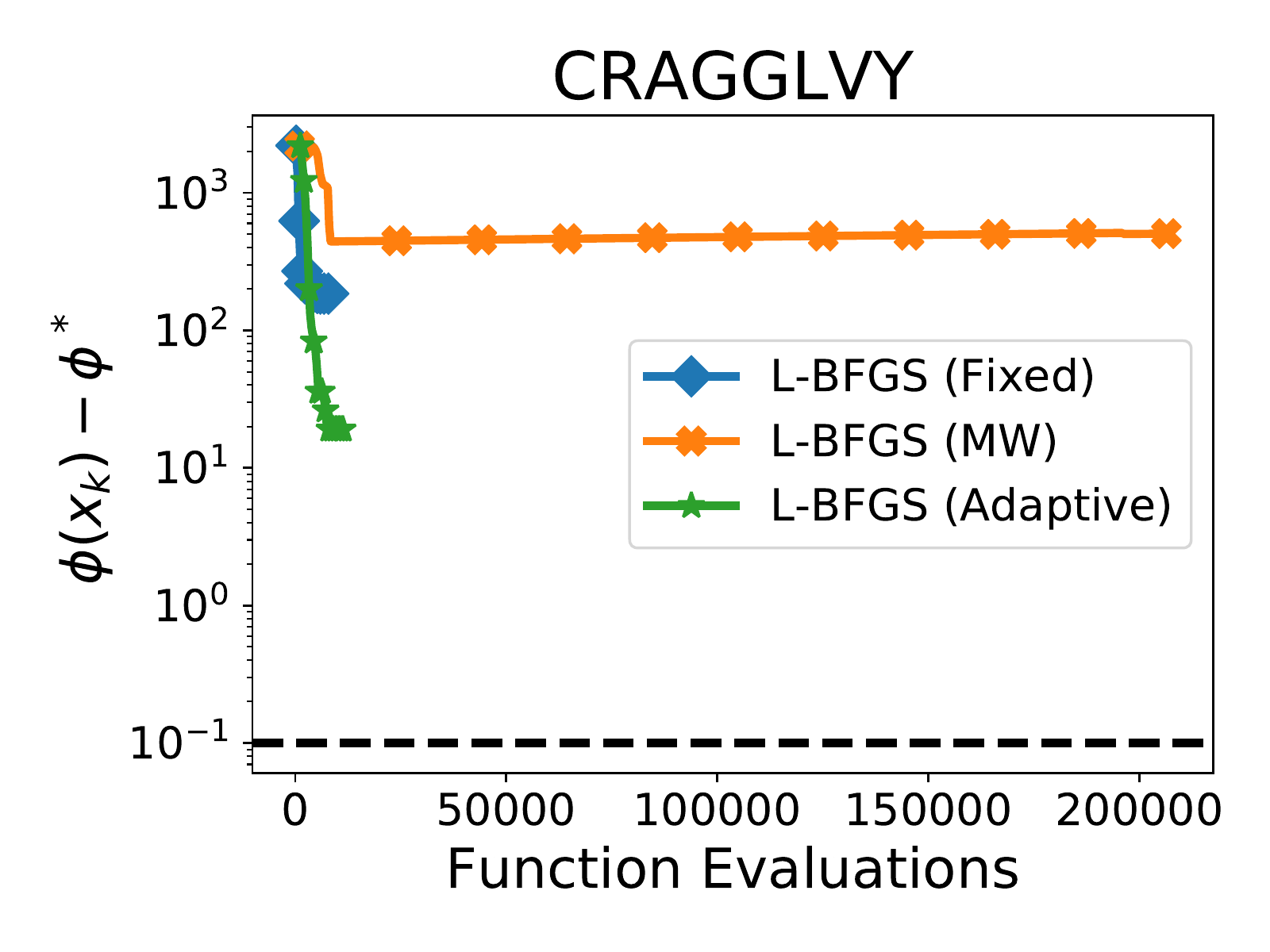}
    \includegraphics[width=0.24\textwidth]{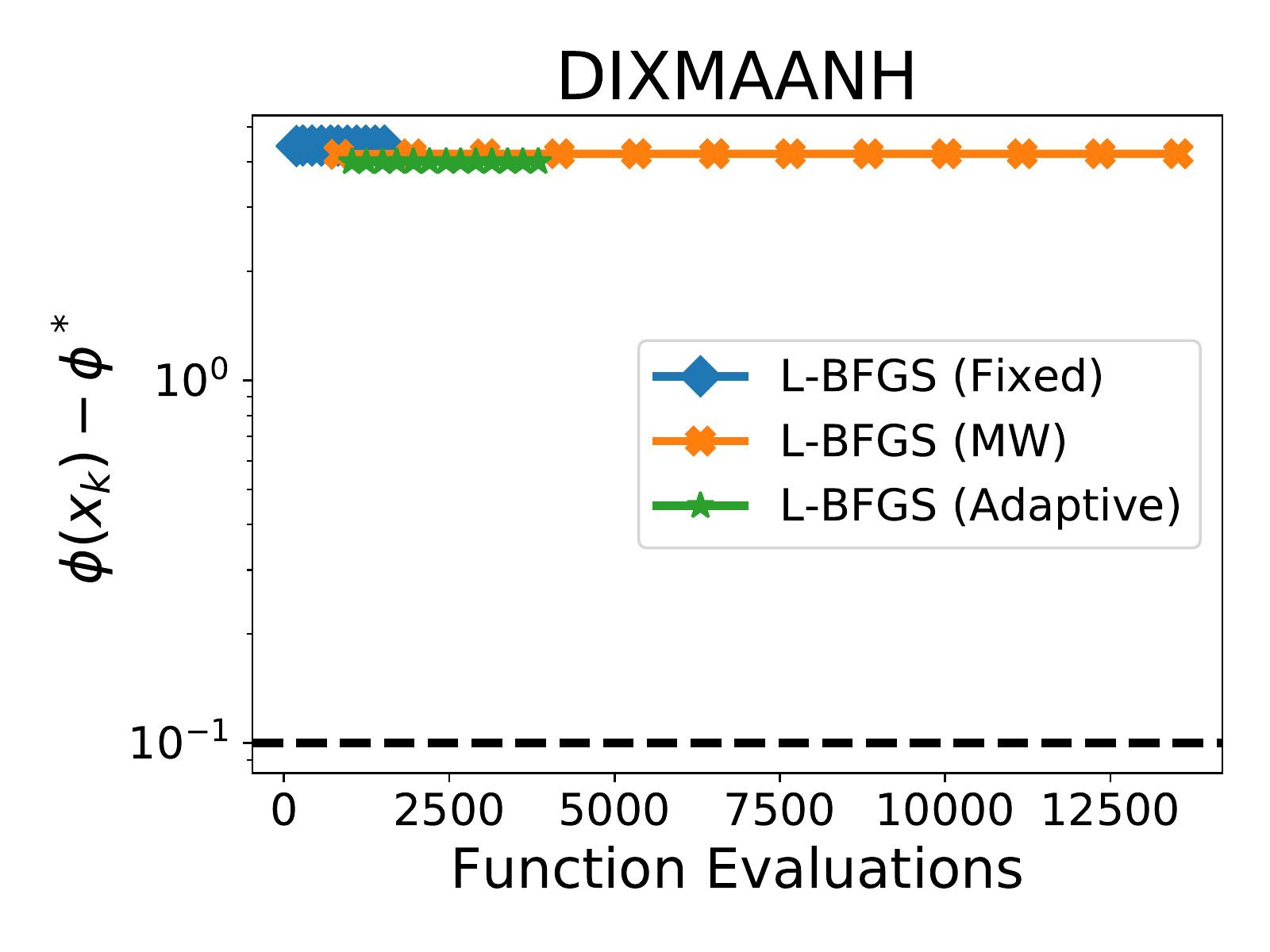}
    \includegraphics[width=0.24\textwidth]{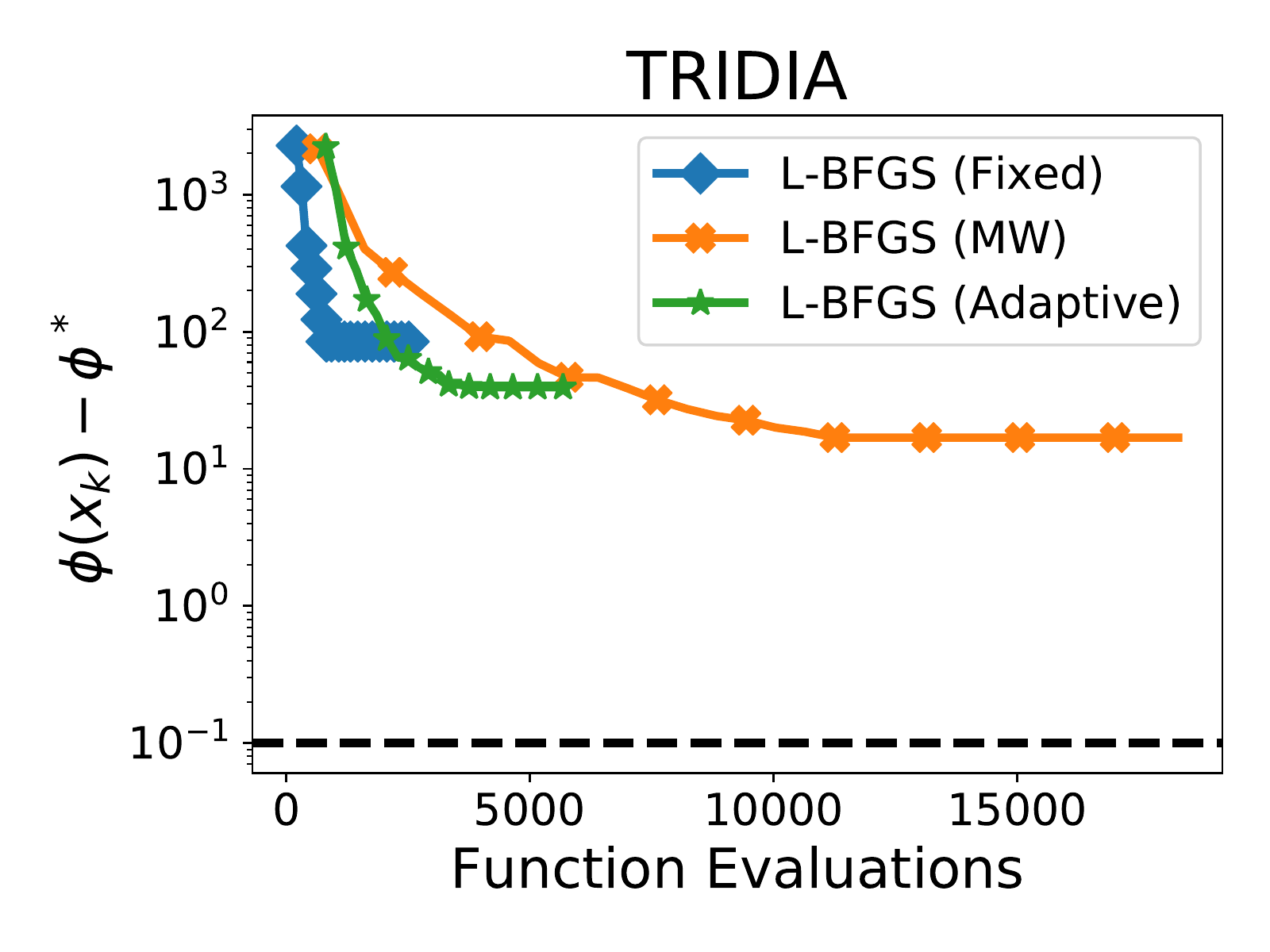}
    \includegraphics[width=0.24\textwidth]{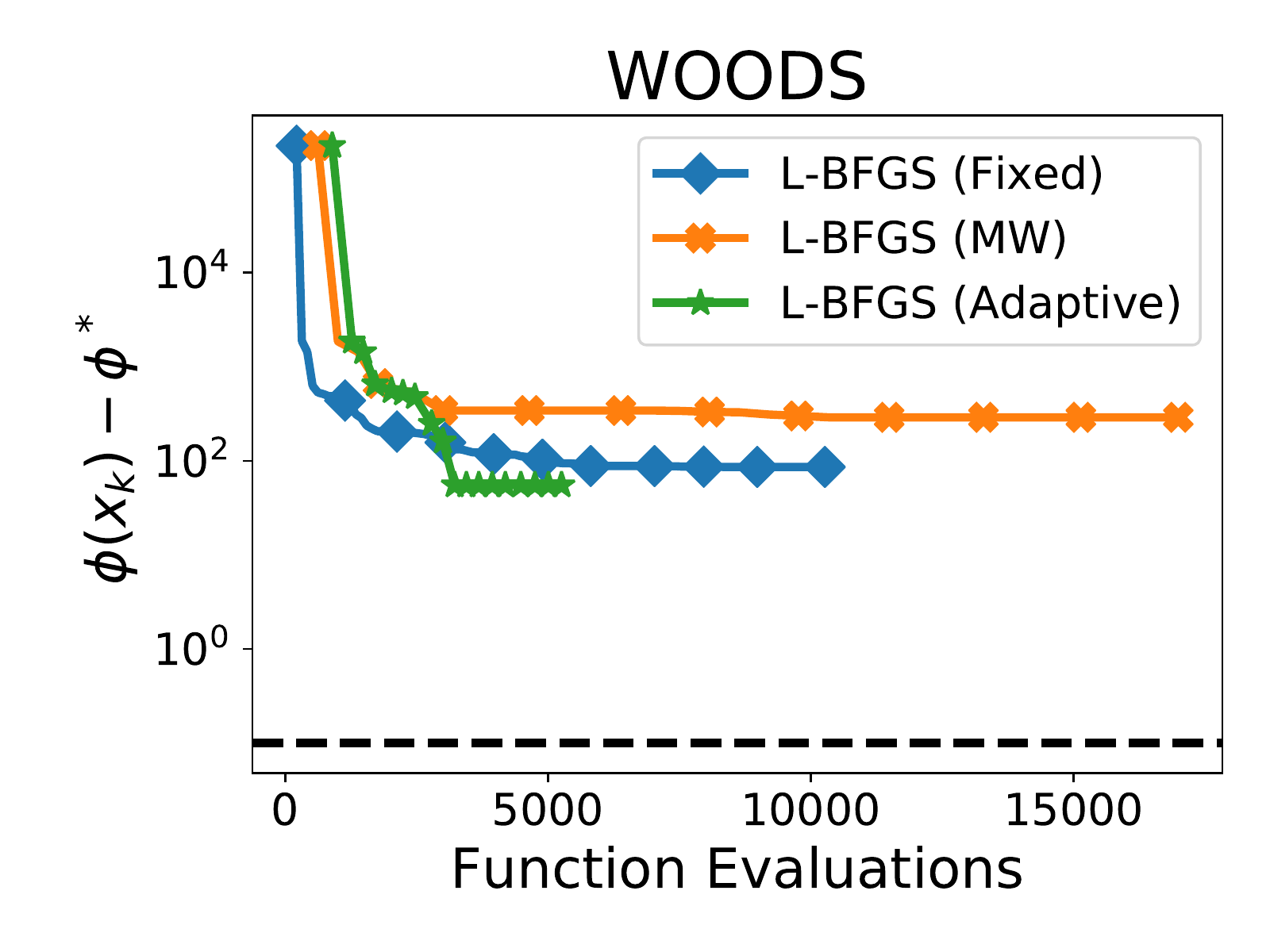}
    \includegraphics[width=0.24\textwidth]{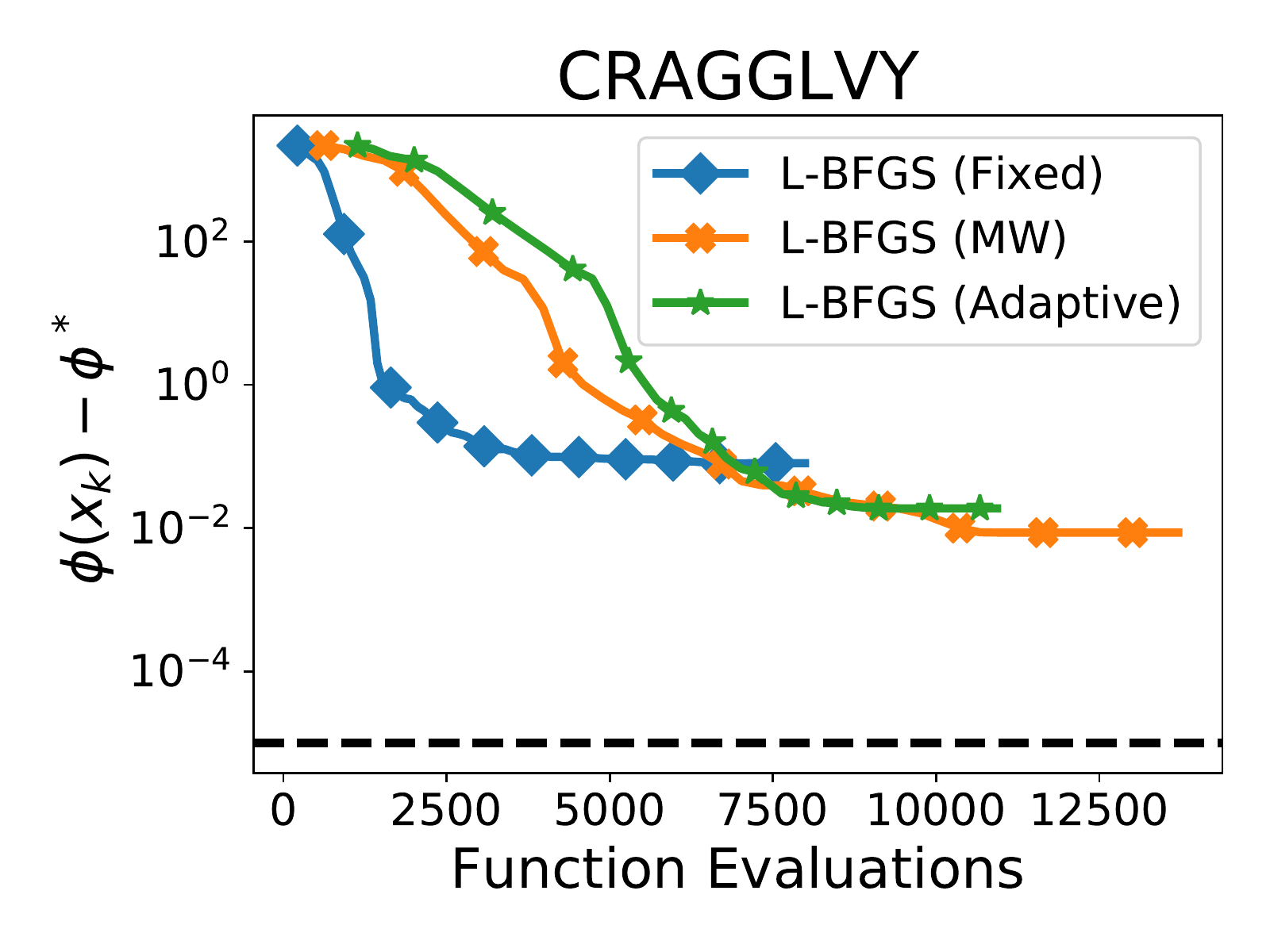}
    \includegraphics[width=0.24\textwidth]{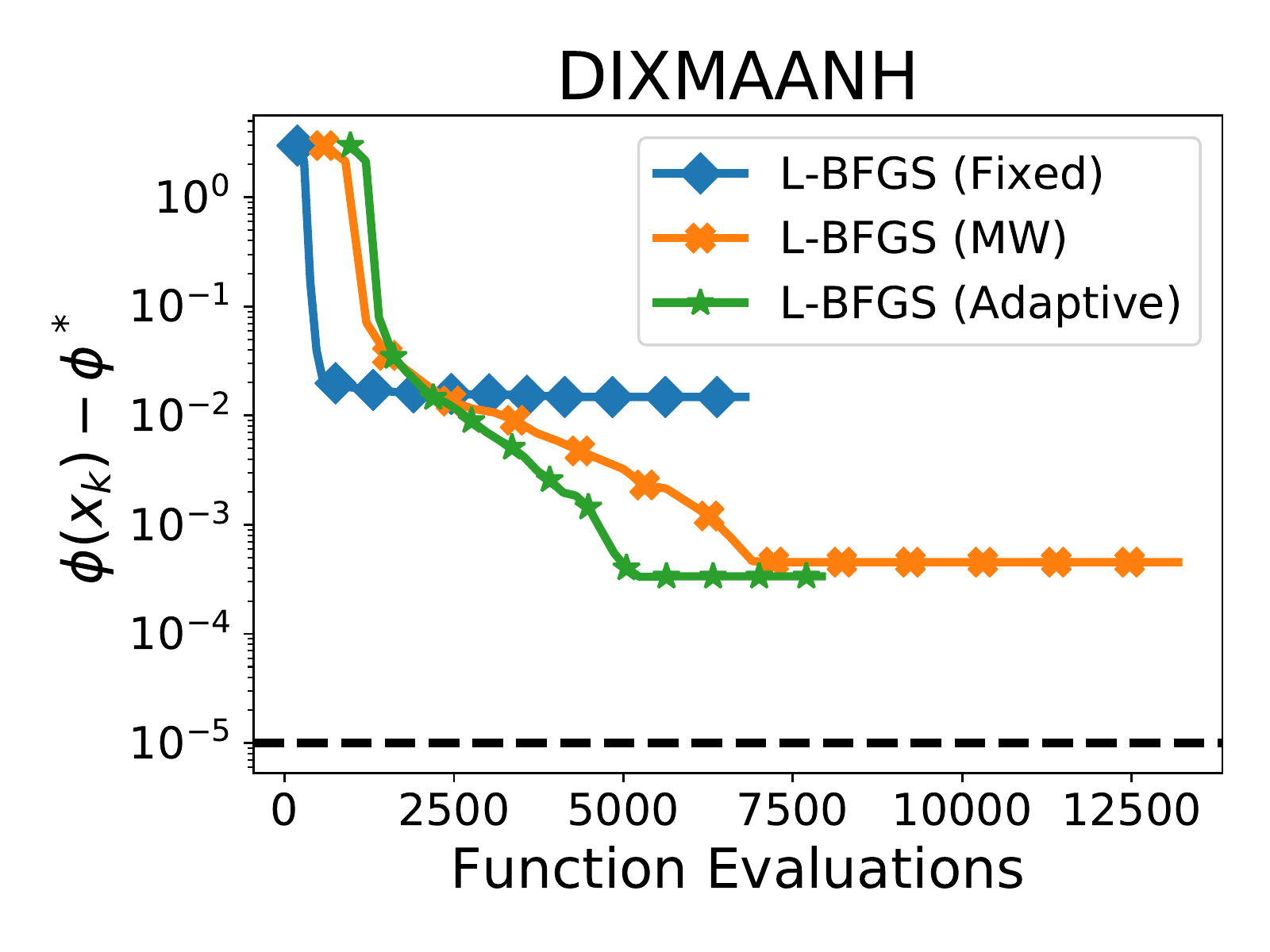}
    \includegraphics[width=0.24\textwidth]{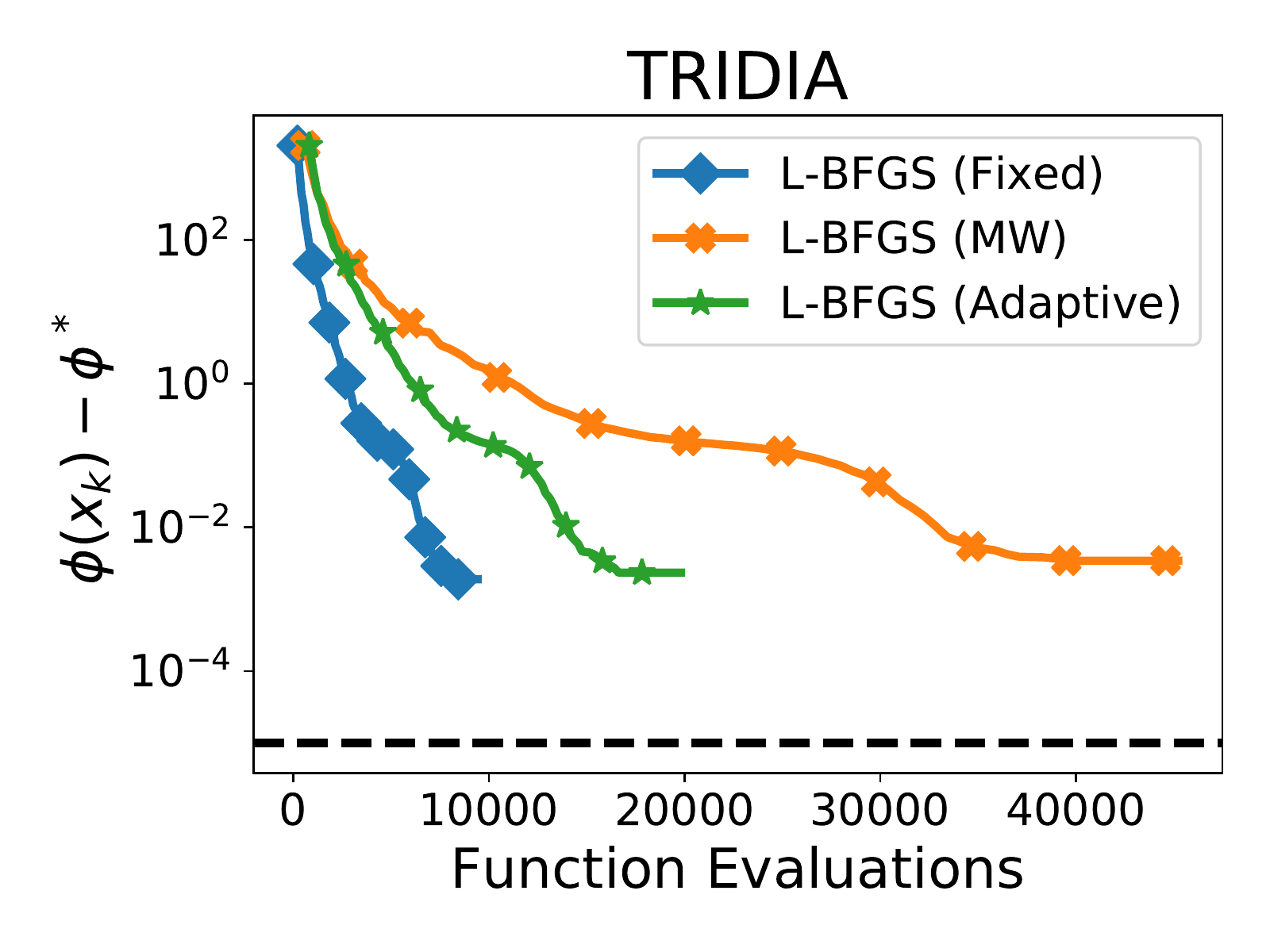}
    \includegraphics[width=0.24\textwidth]{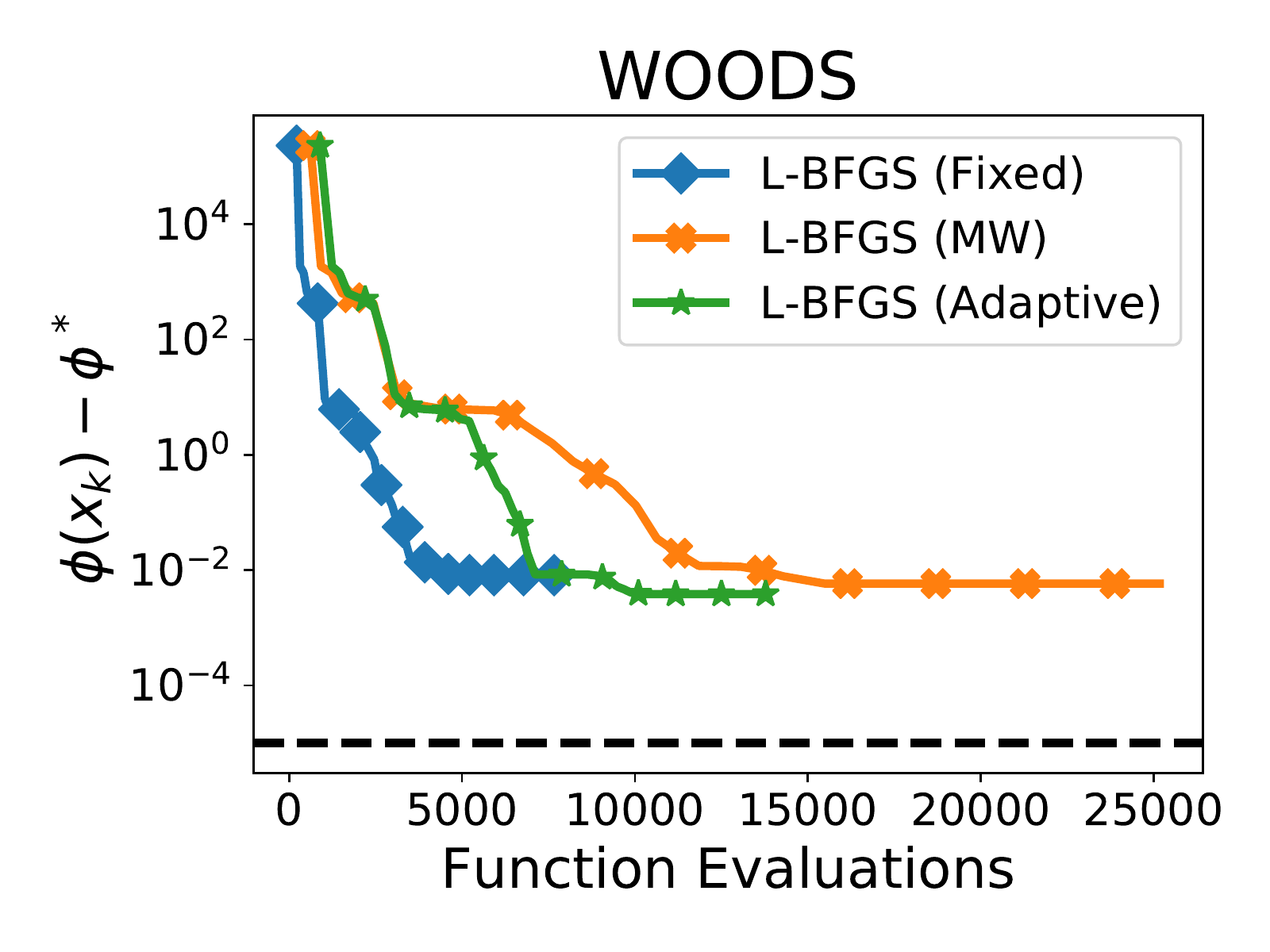}
    \caption{Comparison of forward-difference L-BFGS methods with difference intervals determined using a fixed interval, the Mor\'e and Wild heuristic, and our adaptive algorithm. Comparisons are made on representative problems with noise level $\epsilon_f = 10^{-1}$ (top) and $10^{-5}$ (bottom). We plot the true function value against the function evaluations. The dashed black line shows the noise level $\epsilon_f$ of the function.}
    \label{fig:fd l-bfgs}
\end{figure}

\subsubsection{Central Differences}

In the second set of experiments, we employ central differences,
\[ [g(x; h)]_i  = \frac{f(x + h_i e_i) - f(x - h_i e_i)}{2 h_i}.
\]
The differencing interval is determined via a {\tt Fixed} strategy or the {\tt Adaptive} procedure described in Algorithm~\ref{algo:gen_est_proc}. (The Mor\'e-Wild heuristic only offers estimates of the second order derivatives and thus does not apply to this case.) For the {\tt Fixed} strategy, we choose 
\begin{equation}
    h = \sqrt[3]{\frac{3 \epsilon_f}{L_3}}, ~~~ \mbox{where } L_3 = \max\left\{10^{-1}, \sqrt{\frac{1}{n} \sum_{i = 1}^n \left(\frac{[\nabla^2 \phi(x_0 + \tilde{h} e_i)]_{ii} - [\nabla^2 \phi(x_0)]_{ii} }{\tilde{h}}\right)^2}\right\}
\end{equation}
and $\tilde{h} = \max\{1, |[x_0]_i|\} \sqrt{\epsilon_M}$. Note that noiseless forward differences are applied to the true Hessian to estimate the third derivative along each coordinate direction at the initial point. This synthetic {\tt Fixed} strategy is presented for benchmarking purposes; it is not generally viable in practice.

Representative results are shown in Figure \ref{fig:cd l-bfgs}. Similar to the forward-difference case, our algorithm is able to obtain higher accuracy in the solution compared to the {\tt Fixed} strategy, but at higher cost as expected. As demonstrated by Figure \ref{fig:cd l-bfgs}, there is a stronger effect of noise when the objective value is about the same magnitude as the noise level, leading to oscillations in the function. The complete set of results for all problems and noise levels is presented in the Appendix \ref{app:experiments}. 

\begin{figure}
    \centering
    \includegraphics[width=0.24\textwidth]{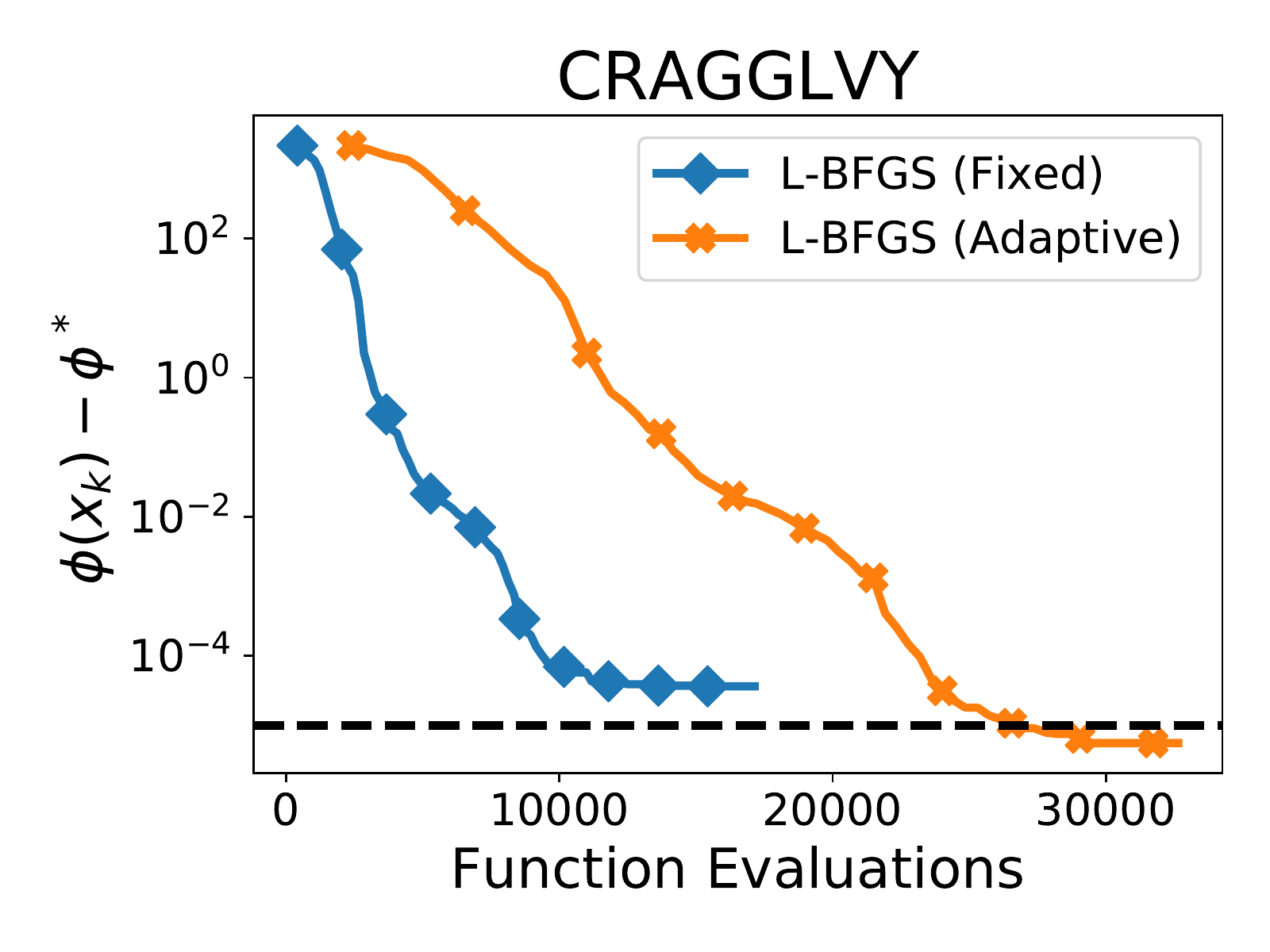}
    \includegraphics[width=0.24\textwidth]{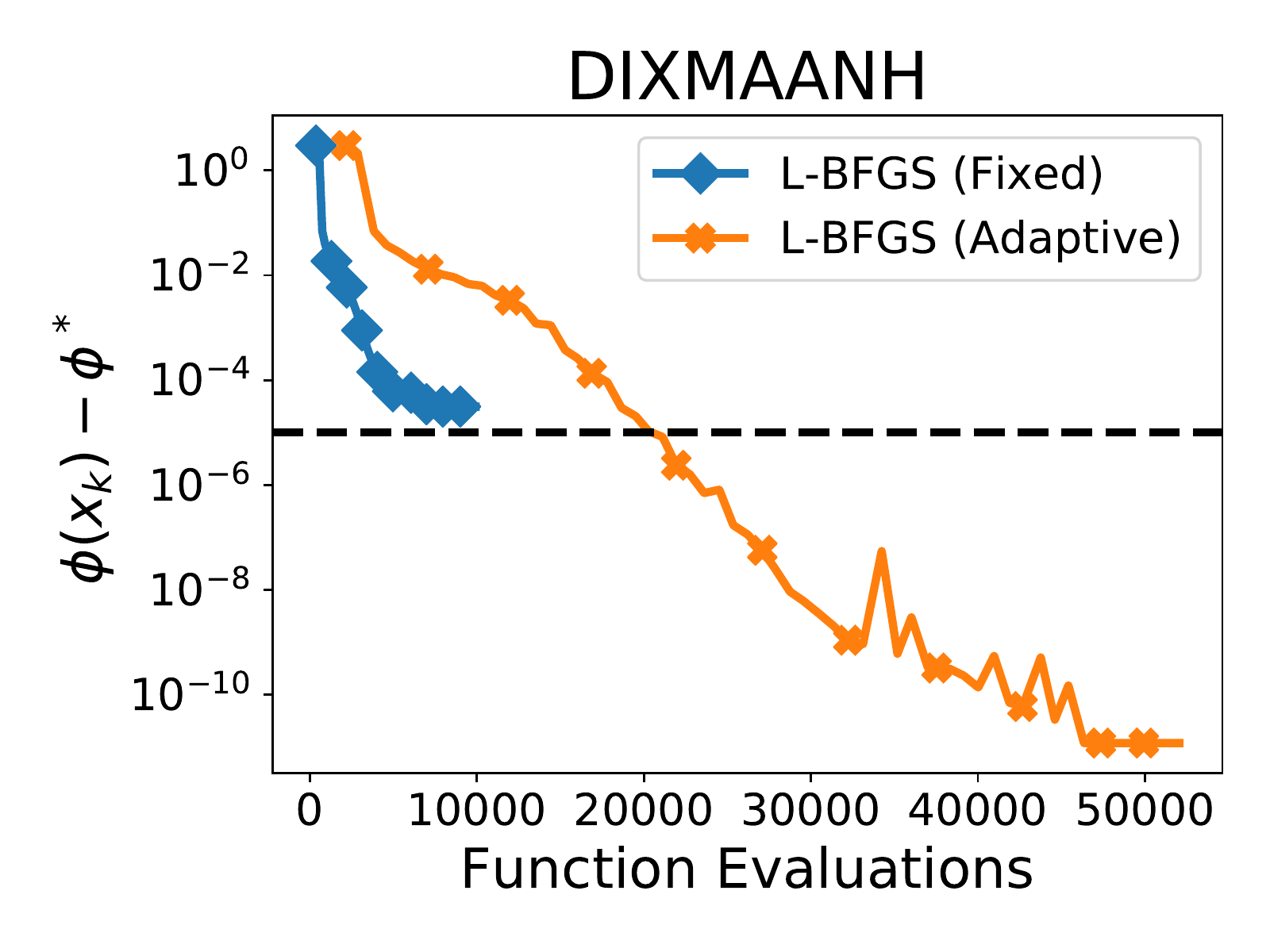}
    \includegraphics[width=0.24\textwidth]{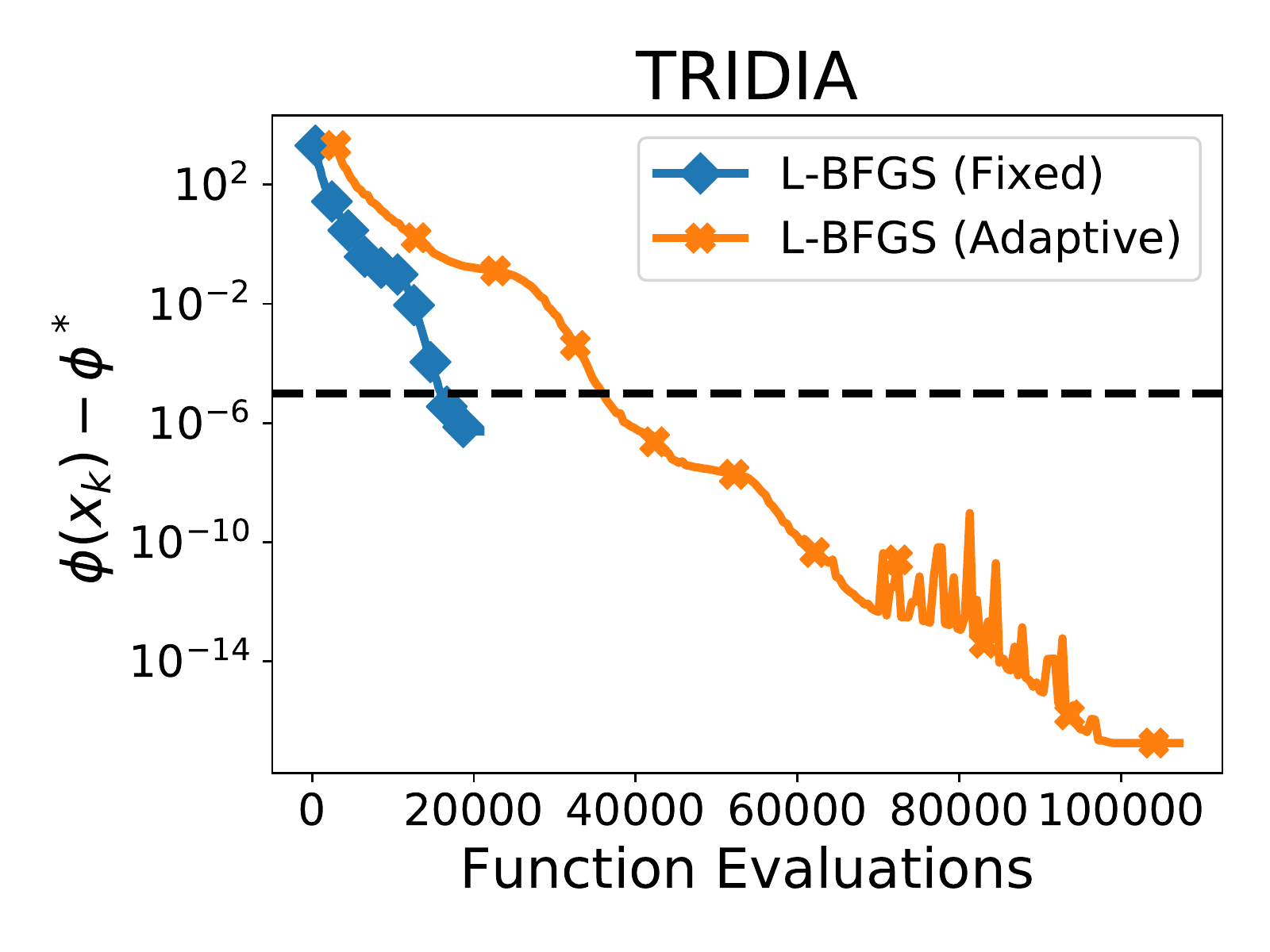}
    \includegraphics[width=0.24\textwidth]{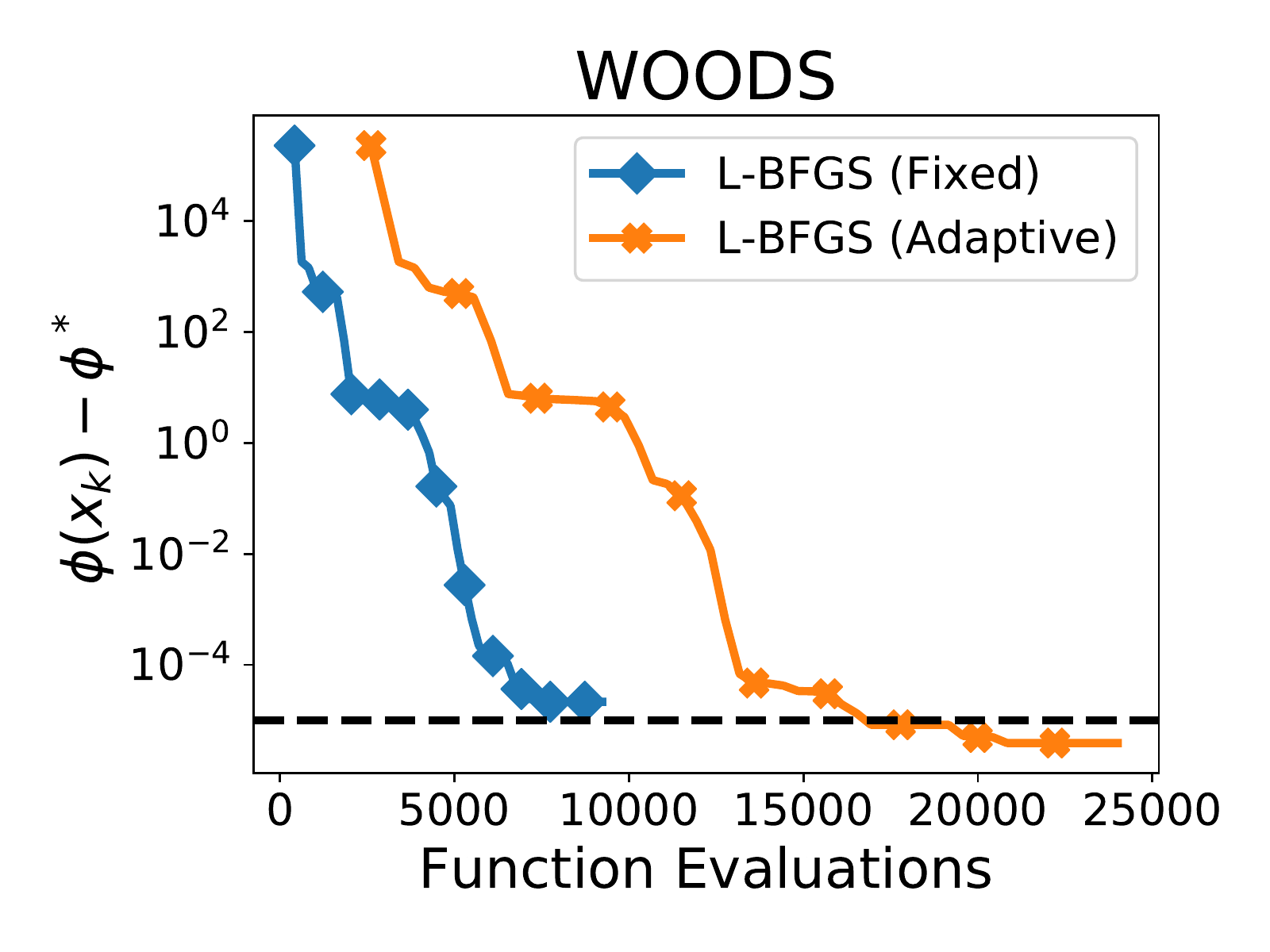}
    \caption{Comparison of central-difference L-BFGS methods with difference intervals determined using a fixed interval and our adaptive algorithm. Comparisons are made on representative problems with noise level $\epsilon_f = 10^{-5}$. We plot the true function value against the function evaluations. The dashed black line shows the noise level $\epsilon_f$ of the function.}
    \label{fig:cd l-bfgs}
\end{figure}

As demonstrated in our experiments, reusing previous difference intervals from prior iterations allows us to reduce the cost of the estimation procedure. Additional savings can be achieved by re-estimating the difference interval periodically;  for simple problems only a few times during the course of the optimization will suffice. 

\section{Final Remarks} \label{sec:final remark}

We have developed a principled and robust procedure for determining the difference interval for estimating gradients in optimization methods, assuming that the noise level is known. Our procedure applies to any finite-difference scheme, including central- and higher-order difference schemes. It performs a bisection search on a ratio that balances the truncation and measurement errors and attains a near-optimal difference interval. While Mor\'e and Wild \cite{more2012estimating} improves upon Gill et al.'s approach \cite{GillMurrSaunWrig83}, and is cheaper than our method, it is not as accurate and robust. These two qualities are essential to make finite-difference approximations reliable enough to be used in established nonlinear optimization algorithms for solving noisy problems.  

In our presentation, we assumed bounded noise, but our methods may be applied to unbounded stochastic noise with finite variance. By applying Chebyshev's inequality, our results on finite difference derivative estimation hold with high probability.

\section*{Acknowledgments}
We are grateful to Oliver Zhuoran Liu and Shigeng Sun for their feedback on this work.

\clearpage

\bibliographystyle{siamplain}
\bibliography{references}

\newpage
\appendix

\section{Finite-Difference Formula Derivation and Tables}
\label{app:fd tables}

We summarize the different standard finite-difference schemes with equidistant points, their theoretical error, optimal steplength, and optimal error in terms of the noise level $\epsilon_f$ and local bound on the $q$-th derivative $L_q$ for a smooth univariate function $\phi : \R \rightarrow \R$ in Tables \ref{tab:det-fd} and \ref{tab:det-cd}. For completeness, we provide a complete derivation of the errors for a generic finite-difference approximation to the $d$-th order derivative below. 

We will use $f : \R \rightarrow \R$ to denote the noisy function evaluations $f(t) = \phi(t) + \epsilon(t)$. We will consider two settings for $\epsilon(t)$: (1) we will assume that $\epsilon(t)$ is bounded, i.e., $|\epsilon(t)| \leq \epsilon_f$ for all $t$; (2) we will assume that $\epsilon(t)$ is a random variable with $\mathbb{E}[\epsilon(t)] = 0$ and $\mathbb{E}[\epsilon(t)^2] = \sigma_f^2$ for all $t$. The tables vary the number of evaluated points $m$ and is dependent on the local Lipschitz constant $L_q \geq 0$ which bounds the $q$-th derivative
$$|\phi^{(q)}(t + h_0 s)| \leq L_q$$
for all $s \in [s_1, s_m]$, where $q$ is the order of the remainder term in the Taylor expansion.

In the most general case, given distinct shifts $\{s_j\}_{j = 1}^m$ and points $\{t_1, ..., t_m\} = \{t + h s_1, t + h s_2, ..., t + h s_m\}$, one can derive a generic finite-difference method to approximate the $d$-th derivative of the form:
\begin{equation*}
  \phi^{(d)}(t) \approx \frac{\sum_{j = 1}^m w_j f(t + s_j h)}{h^d} = f^{(d)}(t; h).
\end{equation*}
We will assume without loss of generality that $s_1 < s_2 < ... < s_m$. First, note that $f^{(d)}$ can be decomposed into a noiseless finite-difference formula and its corresponding error:
\begin{equation*}
    f^{(d)}(t; h) = \frac{\sum_{j = 1}^m w_j \phi(t + s_j h)}{h^d} + \frac{\sum_{j = 1}^m w_j \epsilon(t + s_j h)}{h^d}.
\end{equation*}
Considering the noiseless finite-difference term, since the function is smooth, one can write the Lagrange remainder form of the Taylor series expansions for each function evaluation without noise as:
\begin{equation*}
    \phi(t + h s_j) = \sum_{l = 0}^{q - 1} \frac{1}{l!} \phi^{(l)}(t) s_j^l + \frac{1}{q!} \phi^{(q)}(\xi_j) s_j^q
\end{equation*}
for $\xi_j \in [t, t + h s_j]$ for $j = 1, ..., m$. Therefore, if the weights $w$ satisfy
\begin{equation*}
    \frac{1}{d!} \sum_{j = 1}^m w_j s_j^l = 
    \begin{cases}
    0 & \mbox{ for } l \neq d, ~ l = 0, 1, ..., q - 1 \\
    1 & \mbox{ for } l = d
    \end{cases}
\end{equation*}
then
\begin{equation*}
    \frac{\sum_{j = 1}^m w_j \phi(t + s_j h)}{h^d} = \phi^{(d)}(t) + \frac{h^{q - d}}{q!} \sum_{j = 1}^m w_j \phi^{(q)}(\xi_j) s_j^q.
\end{equation*}
This can be written compactly by the linear system of equations:
\begin{equation*}
    V(s)^T w = d! \cdot e_{p - d}
\end{equation*}
where $V(s) \in \R^{m \times q}$ is the Vandermonde matrix defined as
\begin{equation*}
    V(s) =
    \begin{bmatrix}
        s_1^{q - 1} & s_1^{q - 2} & \hdots & s_1^0 \\
        s_2^{q - 1} & s_2^{q - 2} & \hdots & s_2^0 \\
        \vdots & \vdots & \ddots & \vdots \\
        s_m^{q - 1} & s_m^{q - 2} & \hdots & s_m^0
    \end{bmatrix}
\end{equation*}
and $e_{p - d} \in \R^p$ is the $(p - d)$-th coordinate vector.

To derive a reasonable bound on the total error, suppose we are given $h_0 > 0$ and a bound on $\phi^{(q)}$
\begin{equation*}
    |\phi^{(q)}(t + s h_0)| \leq L_{q}
\end{equation*}
for all $s \in [s_1, s_m]$. If we assume that the error is bounded, i.e., $|\epsilon(t)| \leq \epsilon_f$, then one can then bound the error in the approximation by:
\begin{equation*}
    |f^{(d)}(t; h) - \phi^{(d)}(t)| \leq \frac{L_{q} h^{q - d}}{q!} \sum_{j = 1}^m |w_j s_j^{q}| + \frac{\|w\|_1 \epsilon_f}{h^d} = \epsilon_g(h)
\end{equation*}
for all $0 < h \leq h_0$.
If we assume instead that $\var(\epsilon(t)) = \sigma_f^2$, then we can similarly show
\begin{equation*}
    \mathbb{E}[(f^{(d)}(t; h) - \phi^{(d)}(t))^2] \leq \frac{L_{q}^2 h^{2(q - d)}}{(q!)^2} \sum_{j = 1}^m w_j^2 s_j^{2q} + \frac{\|w\|_2^2 \sigma_f^2}{h^{2d}} = \sigma_g^2(h)
\end{equation*}
for all $0 < h \leq h_0$.

The above Taylor series analysis is pessimistic in that it requires multiple $\xi_j$ points, and therefore yields a loose bound when applying the triangle inequality. Instead, one can consider the derivation of finite-difference schemes for approximating the \textit{first} derivative at an interpolation point using Lagrange polynomials, which yields a tighter bound on the error.

As above, suppose we are given distinct points $\{t_1, ..., t_m\} = \{t + h s_1, ..., t + h s_m\}$ and we are interested in approximating $\phi^{(1)}(t)$.
Recall that the Lagrange basis polynomials are defined as:
\begin{equation*}
    \psi_{p, j}(\tilde{t}) = \frac{\prod_{k \neq j} (\tilde{t} - t_k)}{\prod_{k \neq j} (t_j - t_k)} = \frac{\omega_m(\tilde{t})}{\omega_m^{(1)}(t_j) (\tilde{t} - t_j)}, ~~~~~ \omega_m(\tilde{t}) = \prod_{j = 1}^m (\tilde{t} - t_j).
\end{equation*}
Then the Lagrange interpolation is defined as:
\begin{equation*}
    \ell(\tilde{t}) = \sum_{j = 1}^m \psi_{m, j}(\tilde{t}) \phi(t_j).
\end{equation*}
It is well-known that the remainder is
\begin{equation*}
    \phi(\tilde{t}) - \ell(\tilde{t}) = \frac{\omega_m(\tilde{t})}{m!} \phi^{(m)}(\xi)
\end{equation*}
for some $\xi \in [t_1, t_m]$. Note that the finite-difference formula can simply be obtained by differentiating the Lagrange polynomial
\begin{equation*}
    \ell^{(1)}(\tilde{t}) = \sum_{j = 1}^m \psi_{m, j}^{(1)}(\tilde{t}) \phi(t_j).
\end{equation*}
Therefore, the finite-difference coefficients are obtained by evaluating $\psi_{m, j}^{(1)}(\tilde{t})$. The error is also obtained by noting
\begin{equation*}
    \phi^{(1)}(\tilde{t}) = \ell^{(1)}(\tilde{t}) + \frac{\omega_m^{(1)}(\tilde{t})}{m!} \phi^{(m)}(\xi) + \frac{\omega_m(\tilde{t})}{m!} \phi^{(m)}(\xi) \frac{d \xi}{d x}.
\end{equation*}
Since
\begin{equation*}
    \omega_m^{(1)}(\tilde{t}) = \sum_{j = 1}^m \prod_{k \neq j} (\tilde{t} - t_k),
\end{equation*}
plugging in $\tilde{t} = t_i$ for any $i = 1, ..., m$, we get the following equality
\begin{equation*}
    \phi^{(1)}(t_i) = \ell^{(1)}(t_i) + \frac{\omega_m^{(1)}(t_i)}{m!} \phi^{(m)}(\xi) = \ell^{(1)}(t_i) + \prod_{j \neq i} (t_i - t_j) \frac{\phi^{(m)}(\xi)}{m!}.
\end{equation*}

Given $h_0 > 0$ and a bound on $\phi^{(m)}$
\begin{equation*}
    |\phi^{(m)}(t + h_0 s)| \leq L_{m}
\end{equation*}
for all $s \in [s_1, s_m]$ and assuming $t = t_i$ is one of the interpolation points, we obtain the bound
\begin{equation*}
    |\phi^{(1)}(t) - \ell^{(1)}(t)| \leq \frac{L_{m} h^{m - 1}}{m!} \left|\prod_{j \neq i} s_j \right|
\end{equation*}
and if we incorporate the error in the function evaluations, we obtain a error and variance bounds of
\begin{align*}
     |f^{(1)}(t; h) - \phi^{(1)}(t)| & \leq \frac{L_{m} h^{m - 1}}{m!} \left|\prod_{j \neq i} s_j \right| + \frac{\|w\|_1 \epsilon_f}{h} = \epsilon_g(h) \\
     \mathbb{E}[(f^{(1)}(t; h) - \phi^{(1)}(t))^2] & \leq \frac{L_m^2 h^{2(m - 1)}}{(m!)^2} \prod_{j \neq i} s_j^2 + \frac{\|w\|_2^2 \sigma_f^2}{h^2} = \sigma_g^2(h)
\end{align*}
for all $0 < h \leq h_0$.

\begin{landscape}
\begin{table}
{\footnotesize
\caption{Table containing the finite-difference formula, deterministic error bound $|f^{(1)}(t; h) - \phi^{(1)}(t)| \leq \epsilon_g(h)$ for generic $h$, optimal $h^*$, and optimal error $\epsilon_g(h^*)$ for forward-difference schemes with number of points $m \in \{2, 3, 4, 5\}$.} \label{tab:det-fd}
\begin{center}
\begin{tabular}{ccccc}
\hline
{$m$} & {$f^{(1)}(t; h)$} & {$\epsilon_g(h)$} & {$h^*$} & {$\epsilon_g(h^*)$} \\
\hline
{$2$} & {$\frac{f(t + h) - f(t)}{h}$} & {$\frac{L_2 h}{2} + \frac{2 \epsilon_f}{h}$} & {$2 \sqrt{\frac{\epsilon_f}{L_2}}$} & {$2 \sqrt{L_2 \epsilon_f}$} \\
{$3$} & {$\frac{-3 f(t) + 4 f(t + h) - f(t + 2 h)}{2 h}$} & {$\frac{L_3 h^2}{3} + \frac{4 \epsilon_f}{h}$} & {$\sqrt[3]{\frac{6 \epsilon_f}{L_3}}$} & {$6^{2 / 3} L_3^{1/3} \epsilon_f^{2/3}$} \\
{$4$} & {$\frac{-11 f(t) + 18 f(t + h) - 9 f(t + 2h) + 2f(t + 3h)}{6h}$} & {$\frac{L_4 h^3}{4} + \frac{20 \epsilon_f}{3 h}$} & {$\sqrt[4]{\frac{80 \epsilon_f}{9 L_4}}$} & {$\frac{8 \cdot 5^{3/4}}{3 \sqrt{3}} L_4^{1/4} \epsilon_f^{3/4}$} \\
{$5$} & {$\frac{-25 f(t) + 48 f(t + h) - 36 f(t + 2h) + 16 f(t + 3h) - 3 f(t + 4h)}{12 h}$} & {$\frac{L_5 h^4}{5} + \frac{32 \epsilon_f}{3 h}$} & {$\sqrt[5]{\frac{40 \epsilon_f}{3 L_5}}$} & {$4 \left(\frac{5}{3}\right)^{4 / 5} 2^{2/5} L_5^{1/5} \epsilon_f^{4/5}$} \\
\hline
\end{tabular}
\end{center}
}
\end{table}

\begin{table}
{\footnotesize 
\caption{Table containing the finite-difference formula, deterministic error bound $|f^{(1)}(t; h) - \phi^{(1)}(t)| \leq \epsilon_g(h)$ for generic $h$, optimal $h^*$, and optimal error $\epsilon_g(h^*)$ for central-difference schemes with number of points $m \in \{2, 4, 6\}$.} \label{tab:det-cd}
\begin{center}
\begin{tabular}{ccccc}
\hline
{$m$} & {$f^{(1)}(t; h)$} & {$\epsilon_g(h)$} & {$h^*$} & {$\epsilon_g(h^*)$} \\
\hline
{$2$} & {$\frac{f(t + h) - f(t - h)}{2h}$} & {$\frac{L_3 h^2}{6} + \frac{\epsilon_f}{h}$} & {$\sqrt[3]{\frac{3 \epsilon_f}{L_3}}$} & {$\frac{3^{2/3}}{2} L_3^{1/3} \epsilon_f^{2/3}$} \\
{$4$} & {$\frac{f(t - 2h) - 8 f(t - h) + 8f(t + h) - f(t + 2h)}{12 h}$} & {$\frac{L_5 h^4}{30} + \frac{3 \epsilon_f}{2 h}$} & {$\sqrt[5]{\frac{45 \epsilon_f}{4 L_5}}$} & {$\frac{1}{4} \left(\frac{3}{2}\right)^{4/5} 5^{4/5} L_5^{1/5} \epsilon_f^{4/5}$} \\
{$6$} & {$\frac{-f(t - 3h) + 9f(t - 2h) - 45f(t - h) + 45 f(t + h) - 9f(t + 2h) + f(t + 3h)}{60h}$} & {$\frac{L_7 h^6}{140} + \frac{11 \epsilon_f}{6 h}$} & {$\sqrt[7]{\frac{385 \epsilon_f}{9 L_7}}$} & {$\frac{77^{6 / 7}}{12 \cdot 3^{5/7} \cdot \sqrt[7]{5}} L_7^{1/7} \epsilon_f^{6/7}$} \\
\hline
\end{tabular}
\end{center}
}
\end{table}

\end{landscape}

%%Stochastic Case

\begin{landscape}
\begin{table}
{\scriptsize
\caption{Table containing the finite-difference formula, MSE error bound $\mathbb{E}[(f^{(1)}(t; h) - \phi^{(1)}(t))^2] \leq \sigma^2_g(h)$ for generic $h$, optimal $h^*$, and optimal error $\sigma_g(h^*)$ for forward-difference schemes with number of points $m \in \{2, 3, 4, 5\}$.} \label{tab:sto-fd}
\begin{center}
\begin{tabular}{ccccc}
\hline
{$p$} & {$f^{(1)}(t; h)$} & {$\sigma_g^2(h)$} & {$h^*$} & {$\sigma_g(h^*)$} \\
\hline
{$1$} & {$\frac{f(t + h) - f(t)}{h}$} & {$ \frac{L_2^2 h^2}{4} + \frac{2 \epsilon_f^2}{h^2}$} & {$8^{1/4} \sqrt{\frac{\epsilon_f}{L_2}}$} & {$2^{1/4} \sqrt{L_2 \epsilon_f}$} \\
{$2$} & {$\frac{-3 f(t) + 4 f(t + h) - f(t + 2 h)}{2 h}$} & {$\frac{L_3^2 h^4}{9} + \frac{13 \epsilon_f^2}{2h^2}$} & {$(\frac{3}{2})^{1/3}13^{1/6}\sqrt[3]{\frac{\epsilon_f}{L_3}}$} & {$ \frac{\sqrt[6]{3} \sqrt[3]{13}}{2^{2/3}}  L_3^{1/3} \epsilon_f^{2/3}$} \\
{$3$} & {$\frac{-11 f(t) + 18 f(t + h) - 9 f(t + 2h) + 2f(t + 3h)}{6h}$} & {$\frac{L_4^2 h^6}{16} + \frac{265 \epsilon_f^2}{18 h^2}$} & {$(\frac{2}{3})^{3/8}265^{1/8}\sqrt[4]{\frac{\epsilon_f}{L_4}}$} & {$ \frac{1}{3} \sqrt[8]{\frac{2}{3}} 265^{3/8} L_4^{1/4} \epsilon_f^{3/4}$} \\
{$4$} & {$\frac{-25 f(t) + 48 f(t + h) - 36 f(t + 2h) + 16 f(t + 3h) - 3 f(t + 4h)}{12 h}$} & {$\frac{L_5^2 h^8}{25} + \frac{2245 \epsilon_f^2}{72 h^2}$} & {$\frac{5^{3/10}449^{1/10}}{\sqrt{2}\sqrt[5]{3}} \sqrt[5]{\frac{ \epsilon_f}{L_5}}$} & {$ \frac{5^{7/10}449^{2/5}}{4\cdot 3^{4/5}} L_5^{1/5} \epsilon_f^{4/5}$} \\
\hline
\end{tabular}
\end{center}
}
\end{table}

\begin{table}
{\scriptsize
\caption{Table containing the finite-difference formula, MSE error bound $\mathbb{E}[(f^{(1)}(t; h) - \phi^{(1)}(t))^2] \leq \sigma^2_g(h)$ for generic $h$, optimal $h^*$, and optimal error $\sigma_g(h^*)$ for central-difference schemes with number of points $m \in \{2, 4, 6\}$.} \label{tab:sto-cd}
\begin{center}
\begin{tabular}{ccccc}
\hline
{$p$} & {$f^{(1)}(t; h)$} & {$\sigma_g^2(h)$} & {$h^*$} & {$\sigma_g(h^*)$} \\
\hline
{$2$} & {$\frac{f(t + h) - f(t - h)}{2h}$} & {$ \frac{L_3^2 h^4}{36} + \frac{\epsilon_f^2}{2h^2}$} & {$\sqrt[3]{3} \sqrt[3]{\frac{\epsilon_f}{L_3}}$} & {$\frac{\sqrt[6]{3}}{2} L_3^{1/3} \epsilon_f^{2/3}$} \\
{$4$} & {$\frac{f(t - 2h) - 8 f(t - h) + 8f(t + h) - f(t + 2h)}{12 h}$} & {$\frac{L_5^2 h^8}{900} + \frac{65 \epsilon_f^2}{72 h^2}$} & {$\left(\frac{5}{2}\right)^{3/10} 13^{1/10} \sqrt[5]{\frac{\epsilon_f}{L_5}}$} & {$ \frac{5^{7 / 10} \cdot 13^{2 / 5}}{12 \cdot \sqrt[5]{2}} L_5^{1/5} \epsilon_f^{4/5}$} \\
{$6$} & {$\frac{-f(t - 3h) + 9f(t - 2h) - 45f(t - h) + 45 f(t + h) - 9f(t + 2h) + f(t + 3h)}{60h}$} & {$\frac{L_7^2 h^{12}}{140^2} + \frac{2107 \epsilon_f^2}{1800 h^2}$} & {$\frac{7^{2/7}43^{1/14}}{3^{3/14}}\sqrt[7]{\frac{\epsilon_f}{L_7}}$} & {$\frac{7^{17/14}43^{3/7}}{60 \cdot 3^{2/7}} L_7^{1/7} \epsilon_f^{6/7}$} \\
\hline
\end{tabular}
\end{center}
}
\end{table}
\end{landscape}

\section{Complete Experimental Results}
\label{app:experiments}

Here, we present the complete experimental results from Section \ref{sec:experiments}.

\subsection{Robustness to Different Noise Levels}

We test our procedure on a simple function $\phi(t) = \cos(t)$ for different noise levels using different schemes listed in Table \ref{tab:schemes_tested}. These are shown in Figure \ref{fig:cos_func_noise_level_app}. Detailed numerical results, including the number of iterations and relative error, are listed in Table \ref{tab:noise_level_exp}. 

Observe that our method is able to consistently achieve low relative error using a similar number of function evaluations across all tested noise levels. This is a desirable property, as it demonstrates that our initial choice of the interval $h$ and our method is consistent over different noise levels.

\begin{figure}
    \centering
    \includegraphics[width=0.49\linewidth]{figures/adafd/noise_level/FD.pdf}
	\includegraphics[width=0.49\linewidth]{figures/adafd/noise_level/CD.pdf}
    \\
    \includegraphics[width=0.49\linewidth]{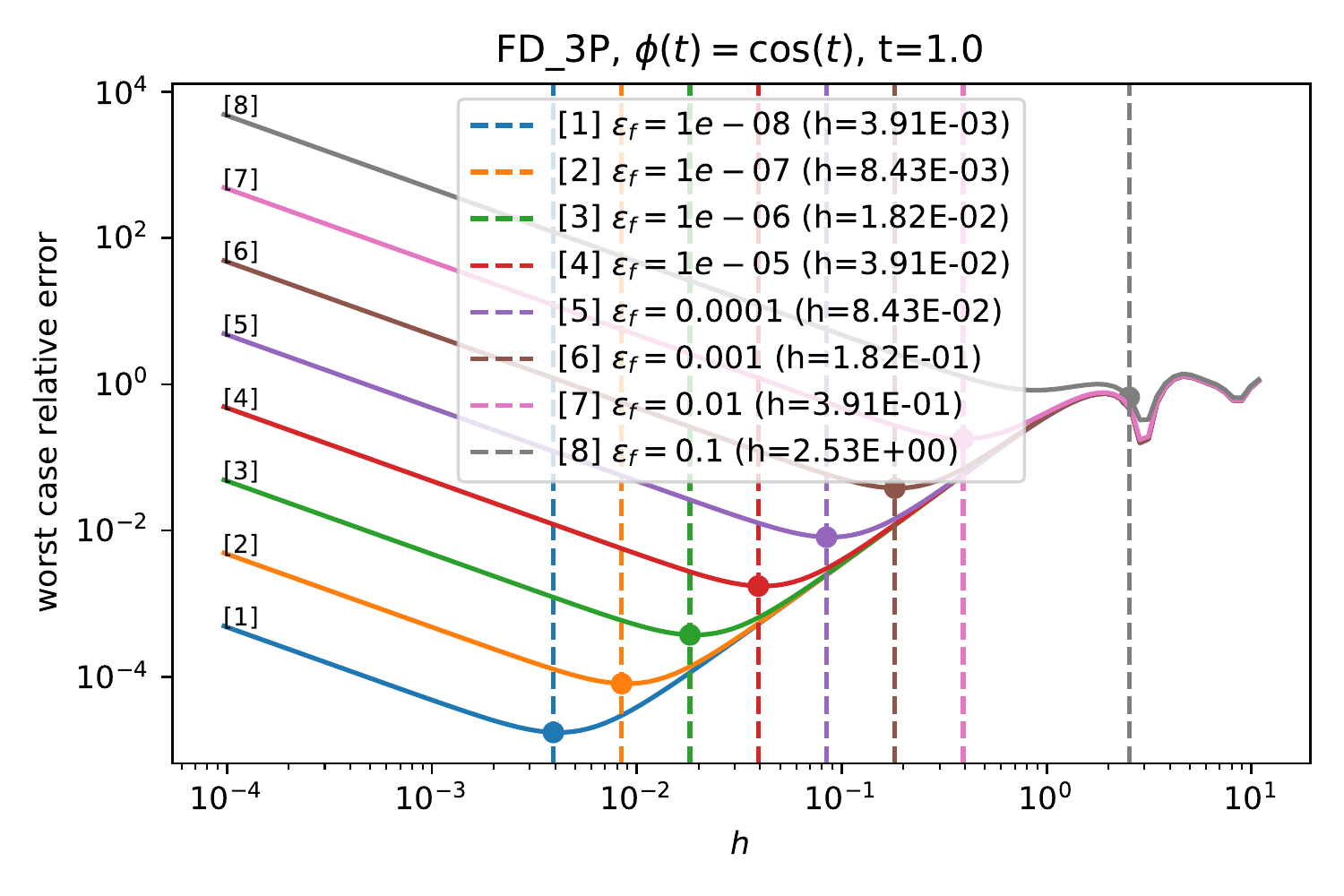}
	\includegraphics[width=0.49\linewidth]{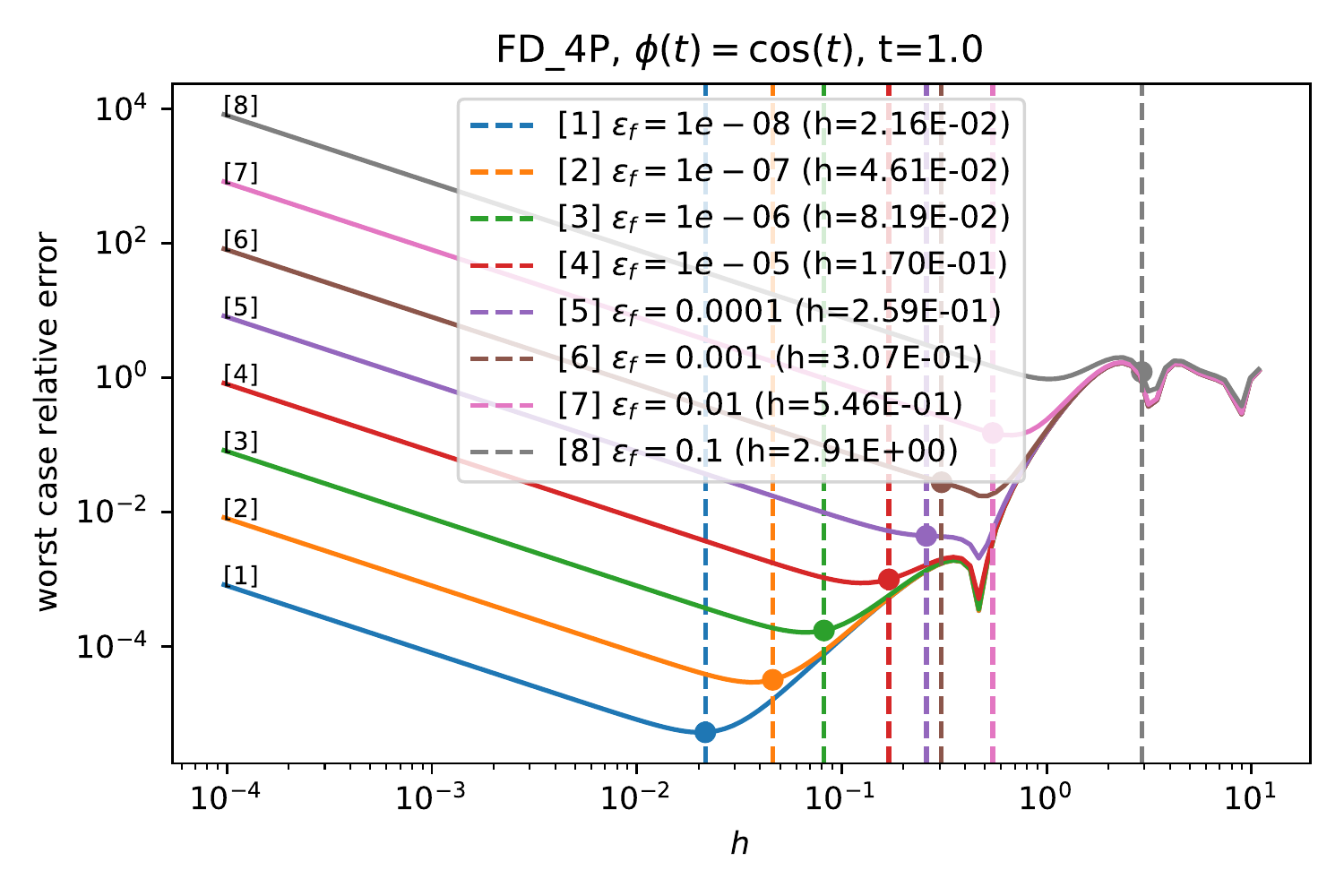}
	\\
	\includegraphics[width=0.49\linewidth]{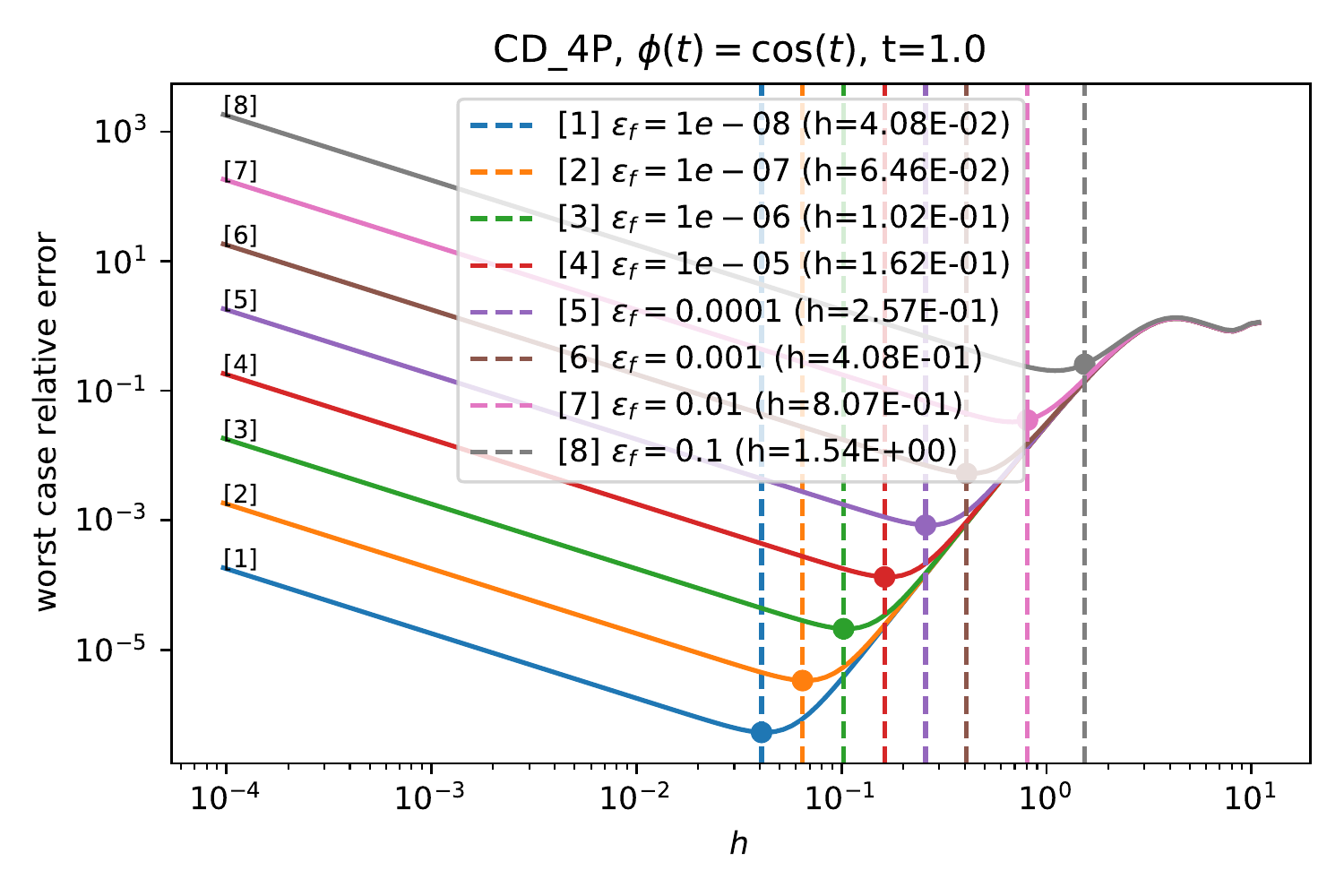}
	\includegraphics[width=0.49\linewidth]{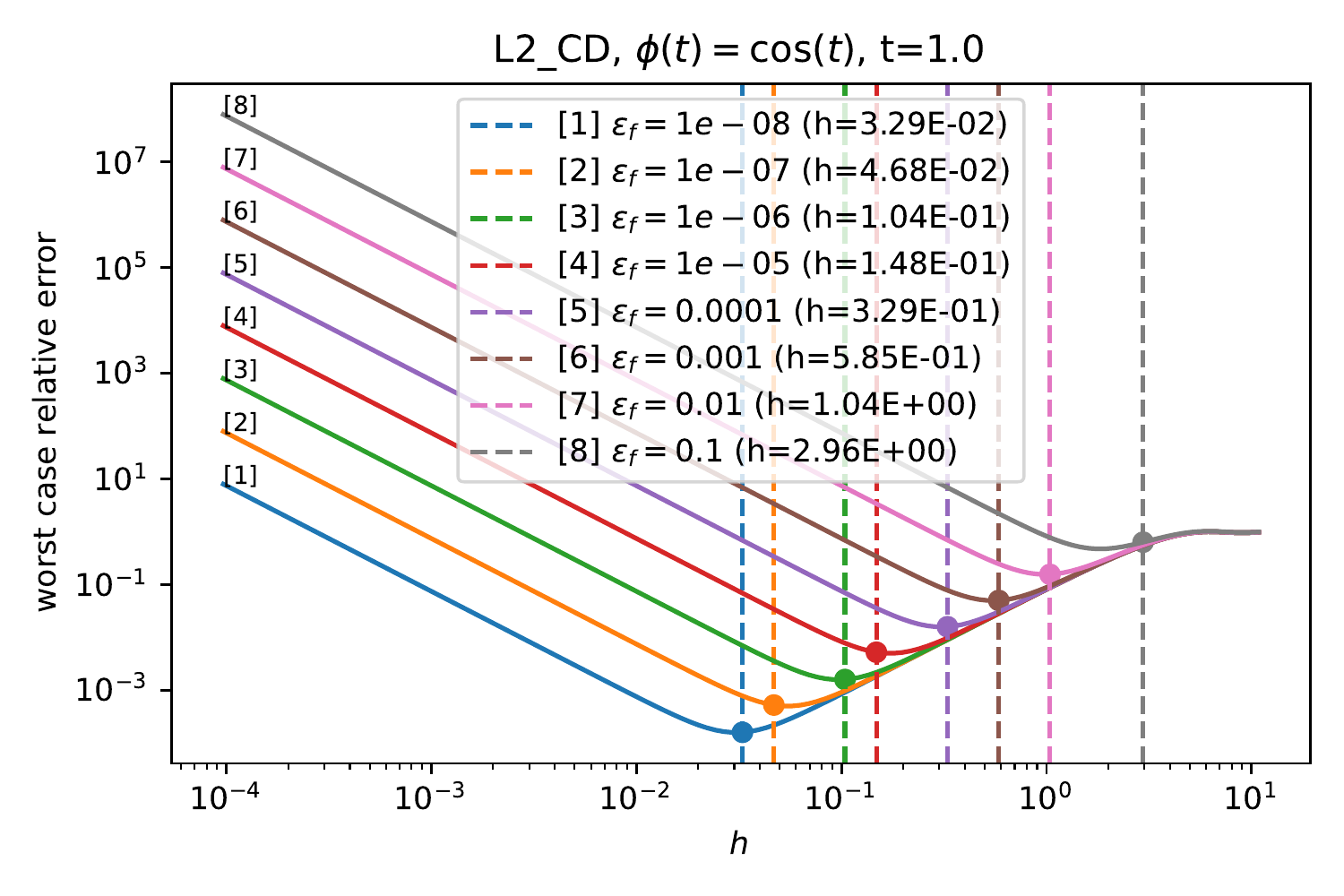}
	\\
    \caption{Worst case relative error $\delta_S(h; \phi, t, \epsilon_f)$ against $h$ on function $\phi(t) = \cos(t)$ with different noise levels; the vertical dashed line represents the $h_\dagger$ output by Algorithm \ref{algo:gen_est_proc}.
    }
    \label{fig:cos_func_noise_level_app}
\end{figure}

\begin{table}[tbhp]
{\scriptsize
\caption{Detailed results for $\phi(t) = \cos(t)$ with different noise levels; $r$ represents the final testing ratio; $h^*$ is the $h$ that minimizes $\delta_S(h; \phi, t, \epsilon_f)$ reported by \texttt{minimize\_scalar} function in \texttt{scipy.optimize} and could be unreliable.}\label{tab:noise_level_exp}
\begin{center}
\begin{tabular}{lrrrrrrr}
	\hline
		scheme &    $h_\dagger$ & $h^*$ &    $r$ &  \texttt{\#iters} &  \texttt{\#Evals} & relative error &    $\epsilon_f$ \\
		\hline
    \texttt{FD} & 2.00e-04 &  2.72e-04 &  2.08 &       1 &         3 &  1.86e-05 & 1.00e-08 \\
    \texttt{FD} & 6.32e-04 &  8.59e-04 &  2.00 &       1 &         3 &  4.21e-05 & 1.00e-07 \\
    \texttt{FD} & 3.50e-03 &  2.73e-03 &  4.79 &       4 &         8 &  1.17e-03 & 1.00e-06 \\
    \texttt{FD} & 6.32e-03 &  8.64e-03 &  1.77 &       1 &         3 &  1.72e-03 & 1.00e-05 \\
    \texttt{FD} & 2.00e-02 &  2.76e-02 &  2.00 &       1 &         3 &  1.69e-03 & 1.00e-04 \\
    \texttt{FD} & 6.32e-02 &  9.05e-02 &  1.73 &       1 &         3 &  5.07e-04 & 1.00e-03 \\
    \texttt{FD} & 5.00e-01 &  1.73e+00 &  3.89 &       3 &         6 &  9.86e-02 & 1.00e-02 \\
    \texttt{FD} & 6.32e-01 &  8.26e+00 &  1.52 &       1 &         3 &  2.97e-01 & 1.00e-01 \\
    \texttt{CD} & 3.11e-03 &  3.29e-03 &  2.40 &       1 &         4 &  1.89e-06 & 1.00e-08 \\
    \texttt{CD} & 6.69e-03 &  7.09e-03 &  2.66 &       1 &         4 &  5.39e-06 & 1.00e-07 \\
    \texttt{CD} & 1.44e-02 &  1.53e-02 &  2.72 &       1 &         4 &  9.35e-06 & 1.00e-06 \\
    \texttt{CD} & 3.11e-02 &  3.29e-02 &  2.05 &       1 &         4 &  3.34e-04 & 1.00e-05 \\
    \texttt{CD} & 6.69e-02 &  7.09e-02 &  2.18 &       1 &         4 &  1.33e-03 & 1.00e-04 \\
    \texttt{CD} & 1.44e-01 &  1.53e-01 &  2.55 &       1 &         4 &  3.32e-03 & 1.00e-03 \\
    \texttt{CD} & 3.11e-01 &  3.30e-01 &  1.89 &       1 &         4 &  3.84e-02 & 1.00e-02 \\
    \texttt{CD} & 6.69e-01 &  7.74e+00 &  2.01 &       1 &         4 &  5.71e-02 & 1.00e-01 \\
\texttt{FD\_3P} & 3.91e-03 &  4.14e-03 &  2.88 &       1 &         5 &  1.01e-05 & 1.00e-08 \\
\texttt{FD\_3P} & 8.43e-03 &  8.92e-03 &  3.24 &       1 &         5 &  2.01e-05 & 1.00e-07 \\
\texttt{FD\_3P} & 1.82e-02 &  1.92e-02 &  2.76 &       1 &         5 &  2.05e-04 & 1.00e-06 \\
\texttt{FD\_3P} & 3.91e-02 &  4.11e-02 &  3.28 &       1 &         5 &  5.82e-04 & 1.00e-05 \\
\texttt{FD\_3P} & 8.43e-02 &  8.77e-02 &  2.86 &       1 &         5 &  5.65e-03 & 1.00e-04 \\
\texttt{FD\_3P} & 1.82e-01 &  1.86e-01 &  3.10 &       1 &         5 &  2.29e-02 & 1.00e-03 \\
\texttt{FD\_3P} & 3.91e-01 &  2.99e+00 &  2.88 &       1 &         5 &  9.96e-02 & 1.00e-02 \\
\texttt{FD\_3P} & 2.53e+00 &  2.12e+01 &  5.75 &       2 &         7 &  4.17e-01 & 1.00e-01 \\
\texttt{FD\_4P} & 2.16e-02 &  2.04e-02 &  9.36 &       5 &        18 &  2.14e-06 & 1.00e-08 \\
\texttt{FD\_4P} & 4.61e-02 &  3.67e-02 & 16.51 &       4 &        13 &  1.14e-05 & 1.00e-07 \\
\texttt{FD\_4P} & 8.19e-02 &  4.76e-01 & 10.80 &       4 &        13 &  9.43e-05 & 1.00e-06 \\
\texttt{FD\_4P} & 1.70e-01 &  1.25e-01 &  4.40 &       5 &        18 &  6.15e-04 & 1.00e-05 \\
\texttt{FD\_4P} & 2.59e-01 &  3.28e+00 & 14.62 &       4 &        13 &  7.80e-04 & 1.00e-04 \\
\texttt{FD\_4P} & 3.07e-01 &  3.28e+00 &  4.23 &       1 &         6 &  5.46e-03 & 1.00e-03 \\
\texttt{FD\_4P} & 5.46e-01 &  8.78e+00 &  6.44 &       1 &         6 &  3.19e-02 & 1.00e-02 \\
\texttt{FD\_4P} & 2.91e+00 &  8.79e+00 &  4.28 &       2 &         8 &  9.55e-01 & 1.00e-01 \\
\texttt{CD\_4P} & 4.08e-02 &  4.22e-02 &  2.52 &       1 &         6 &  1.16e-07 & 1.00e-08 \\
\texttt{CD\_4P} & 6.46e-02 &  6.69e-02 &  2.04 &       1 &         6 &  8.34e-07 & 1.00e-07 \\
\texttt{CD\_4P} & 1.02e-01 &  1.06e-01 &  1.95 &       1 &         6 &  8.81e-06 & 1.00e-06 \\
\texttt{CD\_4P} & 1.62e-01 &  1.68e-01 &  1.79 &       1 &         6 &  4.29e-05 & 1.00e-05 \\
\texttt{CD\_4P} & 2.57e-01 &  2.67e-01 &  1.71 &       1 &         6 &  4.00e-04 & 1.00e-04 \\
\texttt{CD\_4P} & 4.08e-01 &  4.25e-01 &  1.89 &       1 &         6 &  8.32e-04 & 1.00e-03 \\
\texttt{CD\_4P} & 8.07e-01 &  7.97e+00 &  4.83 &       4 &        20 &  5.87e-03 & 1.00e-02 \\
\texttt{CD\_4P} & 1.54e+00 &  2.06e+01 &  4.25 &       3 &        14 &  2.34e-01 & 1.00e-01 \\
\texttt{L2\_CD} & 3.29e-02 &  3.07e-02 &  3.78 &       4 &        15 &  1.00e-04 & 1.00e-08 \\
\texttt{L2\_CD} & 4.68e-02 &  5.46e-02 &  1.89 &       1 &         5 &  8.55e-05 & 1.00e-07 \\
\texttt{L2\_CD} & 1.04e-01 &  9.71e-02 &  4.22 &       4 &        15 &  7.28e-04 & 1.00e-06 \\
\texttt{L2\_CD} & 1.48e-01 &  1.73e-01 &  1.90 &       1 &         5 &  1.06e-03 & 1.00e-05 \\
\texttt{L2\_CD} & 3.29e-01 &  3.07e-01 &  4.03 &       4 &        15 &  8.28e-03 & 1.00e-04 \\
\texttt{L2\_CD} & 5.85e-01 &  5.49e-01 &  3.81 &       4 &        15 &  2.84e-02 & 1.00e-03 \\
\texttt{L2\_CD} & 1.04e+00 &  9.87e-01 &  3.45 &       4 &        15 &  7.95e-02 & 1.00e-02 \\
\texttt{L2\_CD} & 2.96e+00 &  1.55e+01 &  5.45 &       2 &         7 &  5.40e-01 & 1.00e-01 \\
\hline
	\end{tabular}
\end{center}
}	
\end{table}

\subsection{Affine Invariance}

One advantage of our proposed method is that the testing ratio remains unchanged under affine transformations of the function. It is particularly obvious that our procedure is invariant when adding a constant to the function. Hence, we focus on transformations of the form $\phi(t) \rightarrow a \cdot \phi(b \cdot t)$ for some $a, b \neq 0$. 

To do this, we test Algorithm \ref{algo:gen_est_proc} on the function $\phi(t) = a \cdot \sin(b \cdot t)$ at $t = 0$ for various $a$ and $b$. We fix the noise level to be $\epsilon_f = 10^{-3}$. The results are shown in Figure \ref{fig:sin_func_scale}. Detailed results can be found in Table \ref{tab:scale_inv_exp} and \ref{tab:scale_inv_exp_2}. As seen in Figure \ref{fig:sin_func_scale}, our method is affine-invariant and can output consistently correct results for different $a$ and $b$.

\begin{figure}
    \centering
    \includegraphics[width=0.49\linewidth]{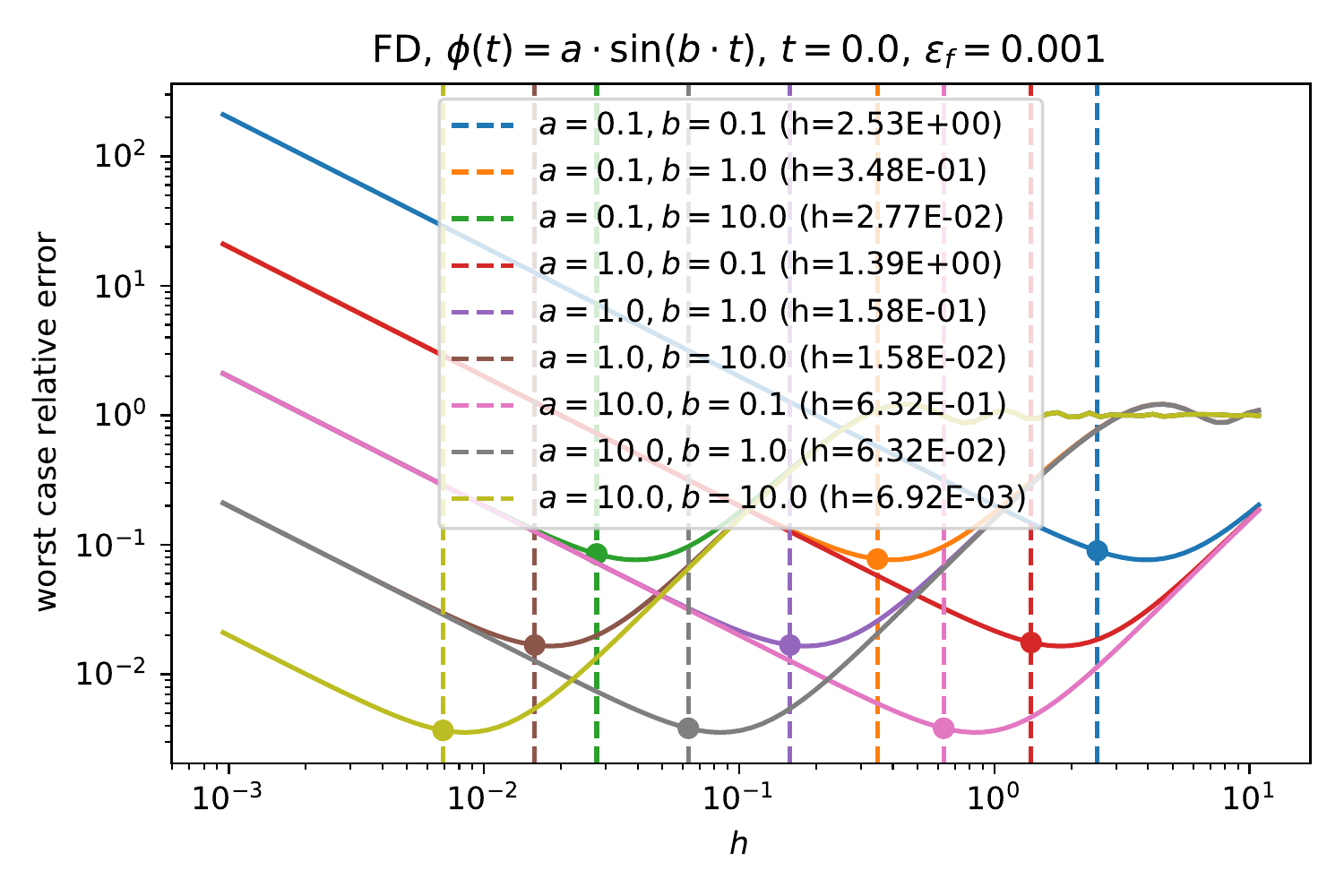}
	\includegraphics[width=0.49\linewidth]{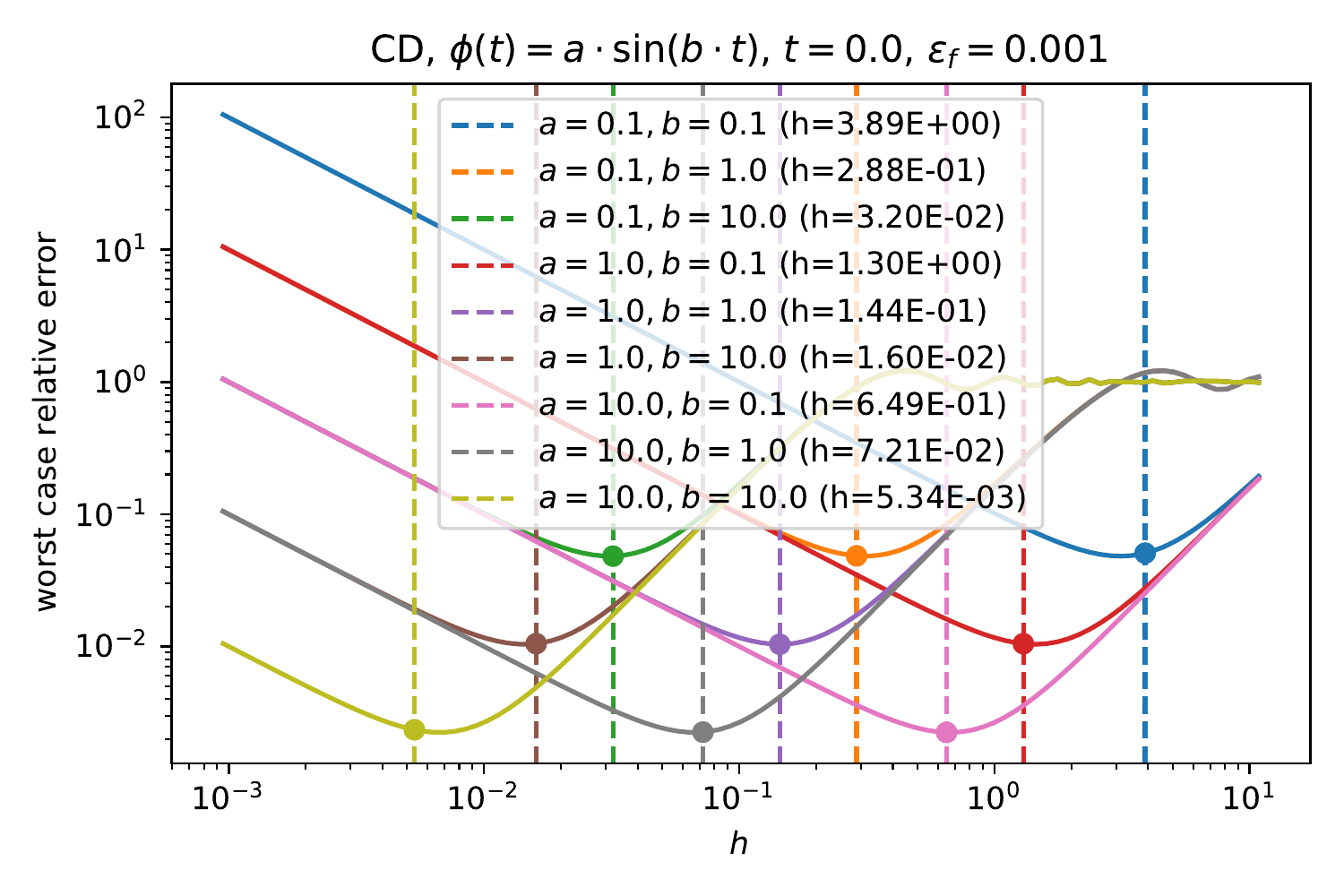}
    \\
    \includegraphics[width=0.49\linewidth]{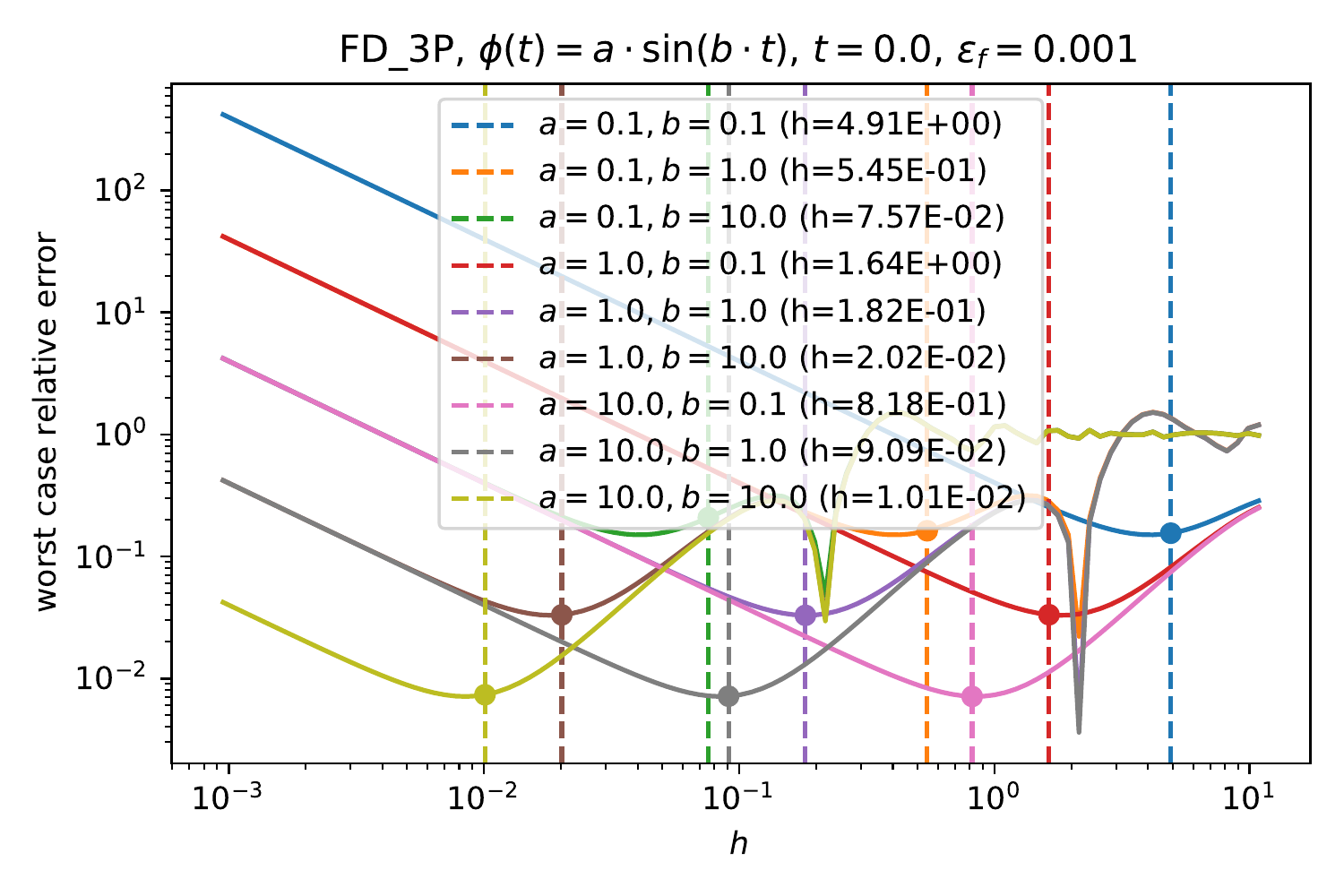}
	\includegraphics[width=0.49\linewidth]{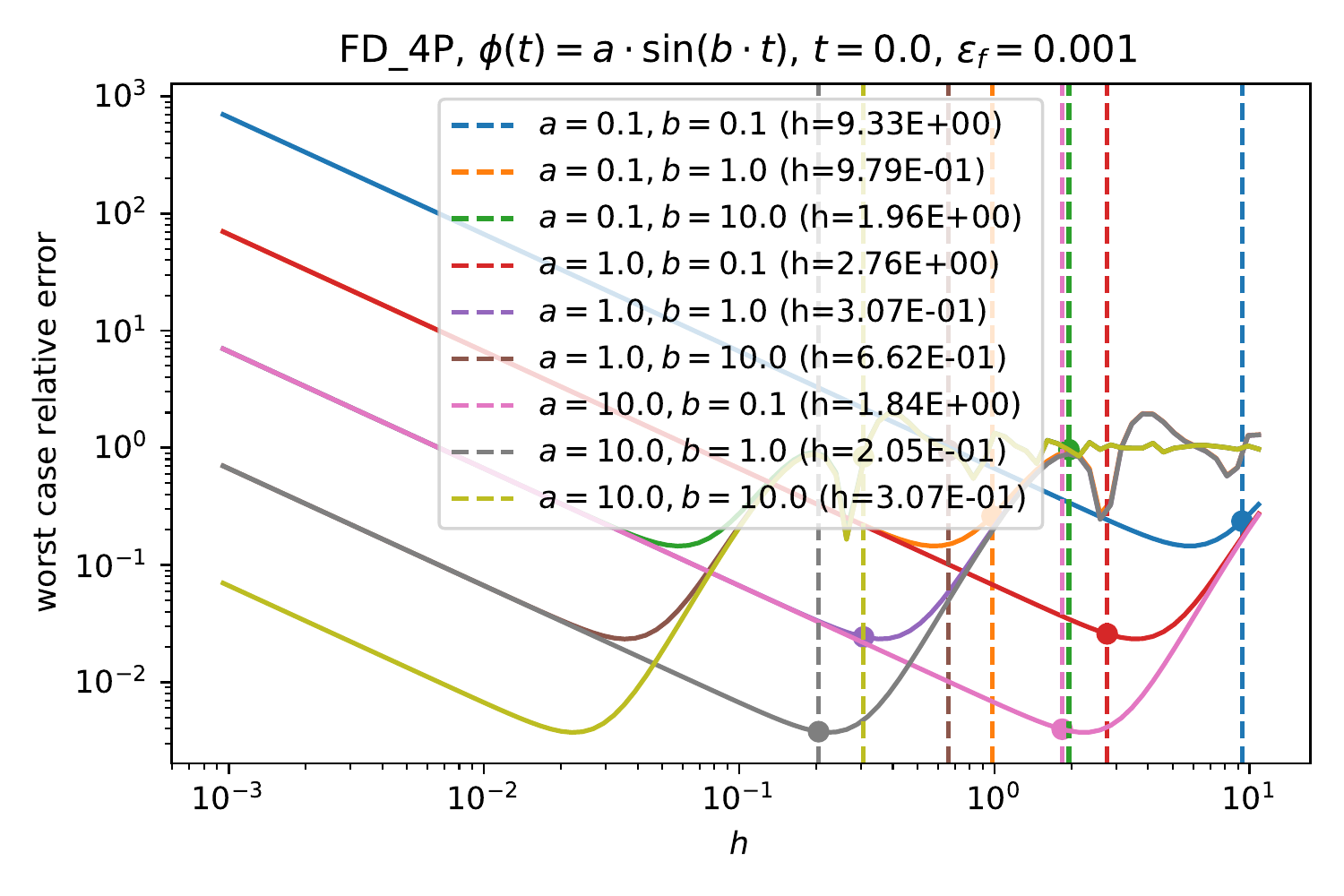}
	\\
	\includegraphics[width=0.49\linewidth]{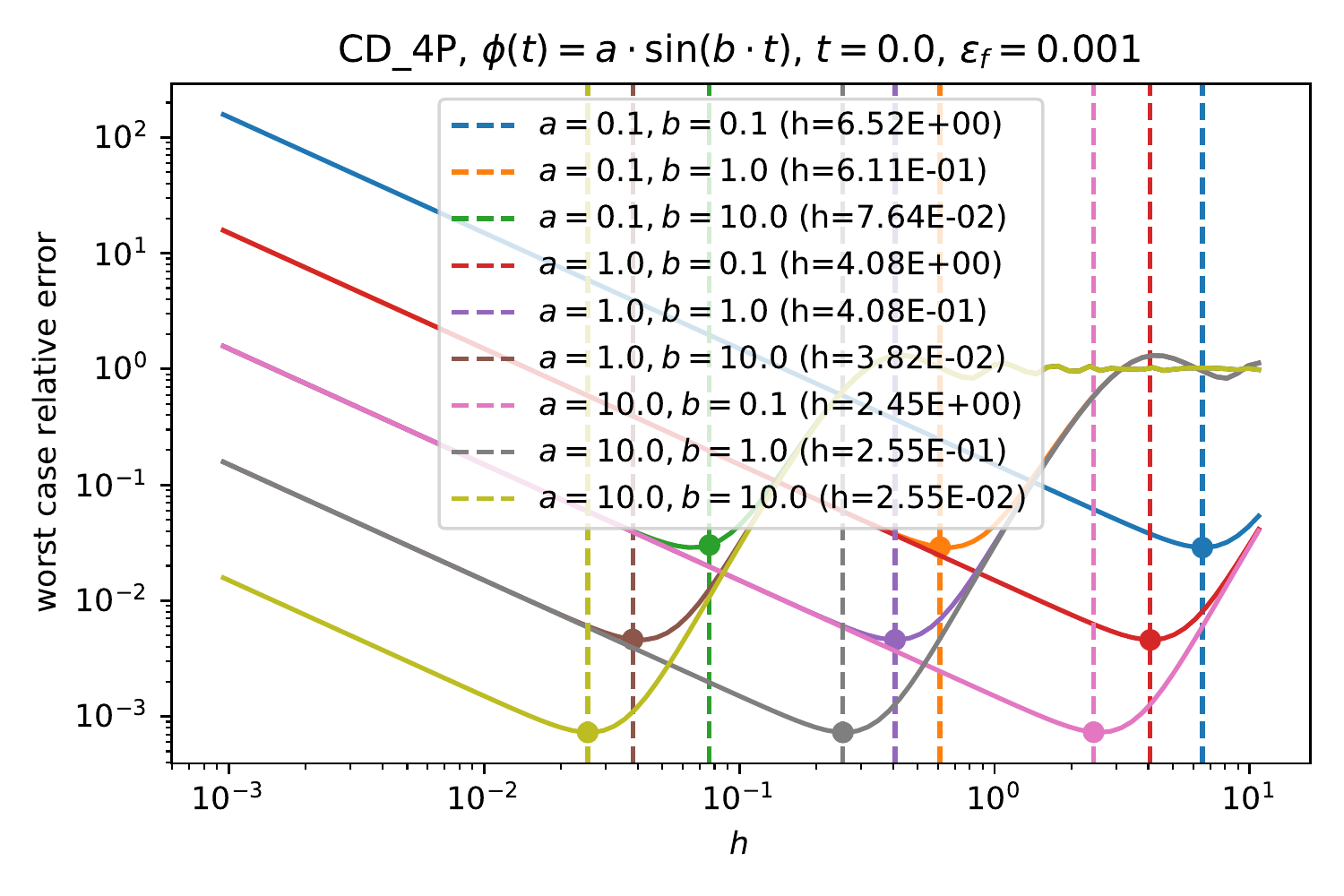}
	\includegraphics[width=0.49\linewidth]{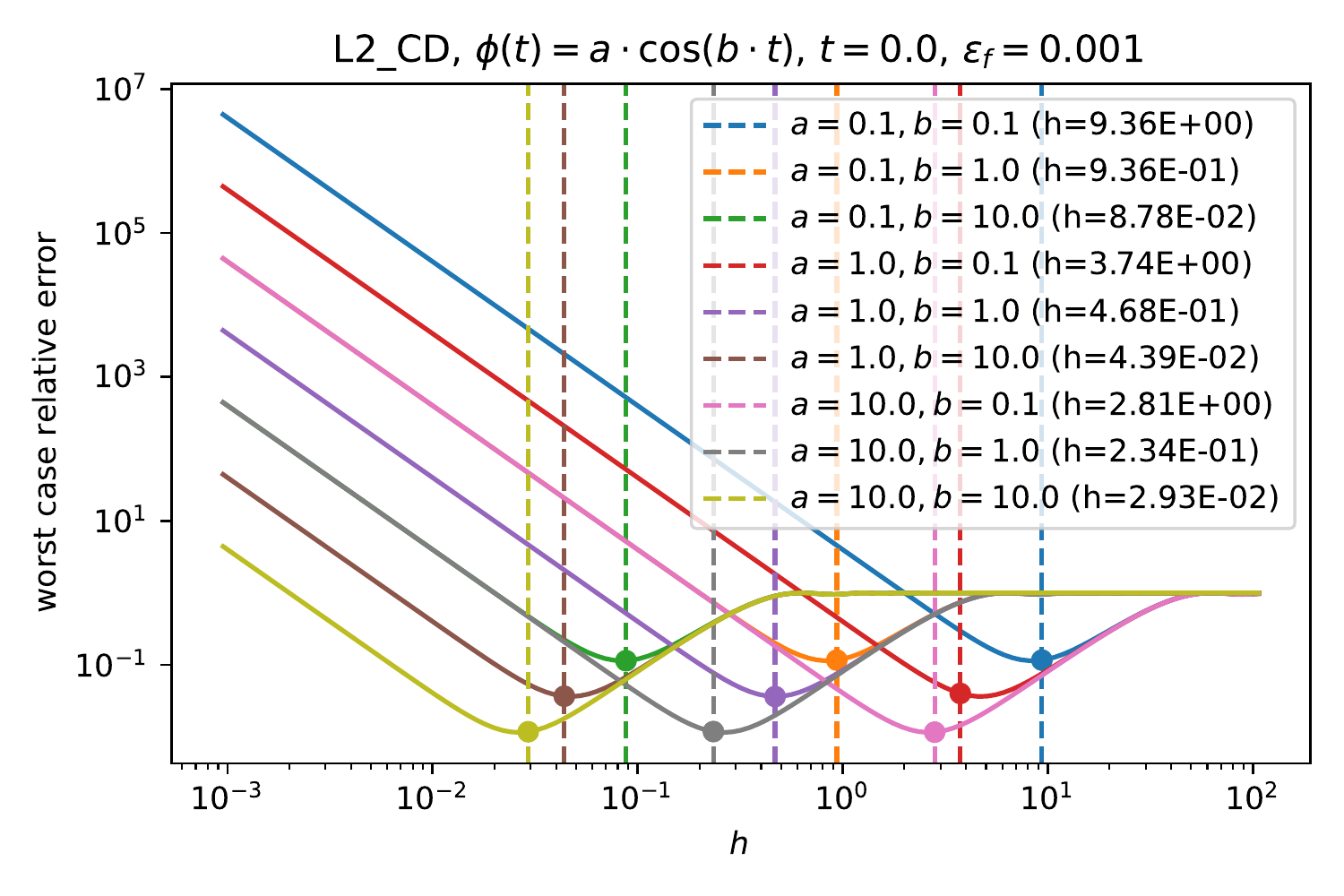}
	\\
    \caption{Worst case relative error $\delta_S(h; \phi, t, \epsilon_f)$ against $h$ on function $\phi(t) = a \cdot \sin(b \cdot t)$ for different $a$ and $b$; the vertical dashed line represents the $h_\dagger$ output by Algorithm \ref{algo:gen_est_proc}.
    }
    \label{fig:sin_func_scale}
\end{figure}

\begin{table}[tbhp]
	{\scriptsize
	\caption{Detailed results for $\phi(t) = a\cdot\sin(b \cdot t)$ with $\epsilon_f = \texttt{1E-3}$; $r$ represents the final testing ratio; $h^*$ is the $h$ that minimizes $\delta_S(h; \phi, t, \epsilon_f)$ reported by \texttt{minimize\_scalar} function in \texttt{scipy.optimize} and could be unreliable.}\label{tab:scale_inv_exp}
	\begin{center}
	\begin{tabular}{rrlrrrrrr}
		\hline
		$a$ & $b$ & scheme &    $h_\dagger$ & $h^*$ &    $r$ &  \texttt{\#iters} &  \texttt{\#Evals} & relative error \\
		\hline
 0.10 &  0.10 &     \texttt{FD} & 2.53e+00 &  3.94e+00 &  1.89 &       5 &         8 &  1.75e-02 \\
 0.10 &  1.00 &     \texttt{FD} & 3.48e-01 &  3.94e-01 &  4.34 &       6 &        11 &  5.29e-02 \\
 0.10 & 10.00 &     \texttt{FD} & 2.77e-02 &  3.94e-02 &  2.68 &       4 &         8 &  2.01e-05 \\
 1.00 &  0.10 &     \texttt{FD} & 1.39e+00 &  1.82e+00 &  2.61 &       7 &        12 &  1.30e-02 \\
 1.00 &  1.00 &     \texttt{FD} & 1.58e-01 &  1.82e-01 &  4.91 &       3 &         6 &  3.01e-03 \\
 1.00 & 10.00 &     \texttt{FD} & 1.58e-02 &  1.82e-02 &  4.58 &       2 &         4 &  6.97e-03 \\
10.00 &  0.10 &     \texttt{FD} & 6.32e-01 &  8.44e-01 &  3.23 &       4 &         7 &  4.76e-04 \\
10.00 &  1.00 &     \texttt{FD} & 6.32e-02 &  8.44e-02 &  3.38 &       1 &         3 &  2.83e-04 \\
10.00 & 10.00 &     \texttt{FD} & 6.92e-03 &  8.44e-03 &  3.34 &       5 &         9 &  2.92e-03 \\
 0.10 &  0.10 &     \texttt{CD} & 3.89e+00 &  3.12e+00 &  5.47 &       4 &        10 &  2.51e-02 \\
 0.10 &  1.00 &     \texttt{CD} & 2.88e-01 &  3.12e-01 &  2.30 &       3 &        10 &  1.38e-02 \\
 0.10 & 10.00 &     \texttt{CD} & 3.20e-02 &  3.12e-02 &  3.13 &       4 &        14 &  1.70e-02 \\
 1.00 &  0.10 &     \texttt{CD} & 1.30e+00 &  1.44e+00 &  2.17 &       3 &         8 &  2.81e-03 \\
 1.00 &  1.00 &     \texttt{CD} & 1.44e-01 &  1.44e-01 &  2.97 &       1 &         4 &  3.46e-03 \\
 1.00 & 10.00 &     \texttt{CD} & 1.60e-02 &  1.44e-02 &  4.06 &       3 &        10 &  4.27e-03 \\
10.00 &  0.10 &     \texttt{CD} & 6.49e-01 &  6.70e-01 &  2.73 &       5 &        16 &  7.02e-04 \\
10.00 &  1.00 &     \texttt{CD} & 7.21e-02 &  6.70e-02 &  3.74 &       4 &        14 &  8.66e-04 \\
10.00 & 10.00 &     \texttt{CD} & 5.34e-03 &  6.70e-03 &  1.52 &       4 &        12 &  4.75e-04 \\
 0.10 &  0.10 & \texttt{FD\_3P} & 4.91e+00 &  8.14e+01 &  2.93 &       4 &        11 &  2.13e-02 \\
 0.10 &  1.00 & \texttt{FD\_3P} & 5.45e-01 &  8.14e+00 &  2.45 &       2 &         7 &  6.48e-02 \\
 0.10 & 10.00 & \texttt{FD\_3P} & 7.57e-02 &  8.14e-01 &  4.90 &       5 &        15 &  1.61e-01 \\
 1.00 &  0.10 & \texttt{FD\_3P} & 1.64e+00 &  2.14e+01 &  2.25 &       3 &         9 &  8.79e-03 \\
 1.00 &  1.00 & \texttt{FD\_3P} & 1.82e-01 &  2.14e+00 &  3.63 &       1 &         5 &  1.86e-04 \\
 1.00 & 10.00 & \texttt{FD\_3P} & 2.02e-02 &  1.83e-02 &  4.43 &       3 &         9 &  1.15e-02 \\
10.00 &  0.10 & \texttt{FD\_3P} & 8.18e-01 &  8.45e-01 &  3.48 &       5 &        13 &  1.71e-03 \\
10.00 &  1.00 & \texttt{FD\_3P} & 9.09e-02 &  8.45e-02 &  4.58 &       4 &        11 &  1.77e-03 \\
10.00 & 10.00 & \texttt{FD\_3P} & 1.01e-02 &  8.45e-03 &  6.67 &       6 &        15 &  1.00e-03 \\
 0.10 &  0.10 & \texttt{FD\_4P} & 9.33e+00 &  8.44e+01 &  4.95 &       9 &        31 &  1.63e-01 \\
 0.10 &  1.00 & \texttt{FD\_4P} & 9.79e-01 &  8.44e+00 &  9.64 &       8 &        32 &  1.50e-01 \\
 0.10 & 10.00 & \texttt{FD\_4P} & 1.96e+00 &  1.78e+01 &  7.71 &       7 &        29 &  9.60e-01 \\
 1.00 &  0.10 & \texttt{FD\_4P} & 2.76e+00 &  3.59e+00 &  4.47 &       3 &        11 &  3.59e-03 \\
 1.00 &  1.00 & \texttt{FD\_4P} & 3.07e-01 &  3.59e-01 &  6.55 &       1 &         6 &  8.16e-03 \\
 1.00 & 10.00 & \texttt{FD\_4P} & 6.62e-01 &  5.88e+00 & 10.15 &       8 &        35 &  9.49e-01 \\
10.00 &  0.10 & \texttt{FD\_4P} & 1.84e+00 &  2.25e+00 &  6.57 &       4 &        14 &  3.68e-04 \\
10.00 &  1.00 & \texttt{FD\_4P} & 2.05e-01 &  2.71e+00 & 10.68 &       3 &        10 &  2.90e-04 \\
10.00 & 10.00 & \texttt{FD\_4P} & 3.07e-01 &  4.62e+00 & 16.47 &       1 &         6 &  8.38e-01 \\
 0.10 &  0.10 & \texttt{CD\_4P} & 6.52e+00 &  7.97e+01 &  2.12 &       5 &        14 &  5.73e-03 \\
 0.10 &  1.00 & \texttt{CD\_4P} & 6.11e-01 &  7.97e+00 &  1.57 &       3 &        14 &  4.45e-03 \\
 0.10 & 10.00 & \texttt{CD\_4P} & 7.64e-02 &  7.97e-01 &  4.32 &       5 &        18 &  1.06e-02 \\
 1.00 &  0.10 & \texttt{CD\_4P} & 4.08e+00 &  4.10e+00 &  2.30 &       7 &        26 &  9.02e-04 \\
 1.00 &  1.00 & \texttt{CD\_4P} & 4.08e-01 &  4.10e-01 &  2.30 &       1 &         6 &  9.02e-04 \\
 1.00 & 10.00 & \texttt{CD\_4P} & 3.82e-02 &  4.10e-02 &  1.68 &       6 &        20 &  6.98e-04 \\
10.00 &  0.10 & \texttt{CD\_4P} & 2.45e+00 &  2.58e+00 &  1.89 &       5 &        18 &  1.18e-04 \\
10.00 &  1.00 & \texttt{CD\_4P} & 2.55e-01 &  2.58e-01 &  2.31 &       4 &        20 &  1.39e-04 \\
10.00 & 10.00 & \texttt{CD\_4P} & 2.55e-02 &  2.58e-02 &  2.31 &       5 &        14 &  1.39e-04 \\
\hline
	\end{tabular}
	\end{center}
}	
\end{table}

\begin{table}[tbhp]
	{\scriptsize
	\caption{Detailed results for $\phi(t) = a\cdot\sin(b \cdot t)$ with $\epsilon_f = \texttt{1E-3}$; $r$ represents the final testing ratio; $h^*$ is the $h$ that minimizes $\delta_S(h; \phi, t, \epsilon_f)$ reported by \texttt{minimize\_scalar} function in \texttt{scipy.optimize} and could be unreliable.}\label{tab:scale_inv_exp_2}
	\begin{center}
	\begin{tabular}{rrlrrrrrr}
		\hline
		$a$ & $b$ & scheme &    $h_\dagger$ & $h^*$ &    $r$ &  \texttt{\#iters} &  \texttt{\#Evals} & relative error \\
\hline
0.10 &  0.10 & \texttt{L2\_CD} & 9.36e+00 &  8.42e+00 & 4.39 &       8 &        23 &  5.97e-02 \\
 0.10 &  1.00 & \texttt{L2\_CD} & 9.36e-01 &  8.42e-01 & 4.79 &       2 &         7 &  4.28e-02 \\
 0.10 & 10.00 & \texttt{L2\_CD} & 8.78e-02 &  8.99e-01 & 3.28 &       5 &        15 &  5.32e-02 \\
 1.00 &  0.10 & \texttt{L2\_CD} & 3.74e+00 &  4.70e+00 & 1.99 &       4 &        11 &  9.49e-03 \\
 1.00 &  1.00 & \texttt{L2\_CD} & 4.68e-01 &  4.70e-01 & 2.57 &       1 &         5 &  2.13e-02 \\
 1.00 & 10.00 & \texttt{L2\_CD} & 4.39e-02 &  4.70e-02 & 2.15 &       6 &        17 &  1.70e-02 \\
10.00 &  0.10 & \texttt{L2\_CD} & 2.81e+00 &  2.64e+00 & 3.59 &       5 &        15 &  7.63e-03 \\
10.00 &  1.00 & \texttt{L2\_CD} & 2.34e-01 &  2.64e-01 & 2.62 &       2 &         7 &  6.94e-04 \\
10.00 & 10.00 & \texttt{L2\_CD} & 2.93e-02 &  2.64e-02 & 5.02 &       5 &        13 &  3.94e-03 \\
\hline
	\end{tabular}
	\end{center}
}	
\end{table}

\subsection{Difficult and Special Examples}

Here, we present the full table of results for the examples listed in Section \ref{sec:experiments} in Table \ref{tab:special_exp} with $\epsilon_f = 10^{-3}$. For reference, the considered problems are:
\begin{enumerate}
	\item $\phi(t) = \bpa{e^t - 1}^2$, at $t = -8$.
	\item $\phi(t) = e^{100 t}$, at $t = 0.01$. 
	\item $\phi(t) = t^4 + 3 t^2 - 10 t$, at $t = 0.99999$. 
	\item $\phi(t) = 10000 t^3 + 0.01 t^2 + 5 t$, at $t = 10^{-9}$. 
\end{enumerate}

\begin{table}[tbhp]
	{\scriptsize
	\caption{Detailed results for special examples, with $\epsilon_f = \texttt{1E-3}$; $r$ represents the final testing ratio; $h^*$ is the $h$ that minimizes $\delta_S(h; \phi, t, \epsilon_f)$ reported by \texttt{minimize\_scalar} function in \texttt{scipy.optimize} and could be unreliable.}\label{tab:special_exp}
	\begin{center}
	\begin{tabular}{llrrrrrr}
		\hline
		$\phi(t)$ & scheme &    $h_\dagger$ & $h^*$ &    $r$ &  \texttt{\#iters} &  \texttt{\#Evals} & relative error \\
		\hline
  $\left(e^{t} - 1.0\right)^{2}$ &     \texttt{FD} & 1.01e+00 &  1.46e+00 &  4.49 &       3 &         5 &  1.02e+00 \\
  $\left(e^{t} - 1.0\right)^{2}$ &     \texttt{CD} & 1.30e+00 &  1.53e+00 &  3.38 &       3 &         8 &  5.73e-02 \\
  $\left(e^{t} - 1.0\right)^{2}$ & \texttt{FD\_3P} & 8.18e-01 &  3.82e+02 &  2.28 &       5 &        13 &  6.63e-01 \\
  $\left(e^{t} - 1.0\right)^{2}$ & \texttt{FD\_4P} & 9.21e-01 &  3.82e+02 &  4.14 &       2 &         8 &  4.15e+00 \\
  $\left(e^{t} - 1.0\right)^{2}$ & \texttt{CD\_4P} & 1.43e+00 &  3.82e+02 &  1.84 &       5 &        22 &  1.11e+00 \\
  $\left(e^{t} - 1.0\right)^{2}$ & \texttt{L2\_CD} & 2.34e+00 &  8.68e+00 &  3.03 &       6 &        19 &  3.90e-01 \\
                     $e^{100 t}$ &     \texttt{FD} & 4.32e-04 &  3.79e-04 &  3.72 &       7 &        11 &  2.74e-02 \\
                     $e^{100 t}$ &     \texttt{CD} & 1.19e-03 &  1.03e-03 &  4.29 &       7 &        20 &  3.09e-03 \\
                     $e^{100 t}$ & \texttt{FD\_3P} & 1.12e-03 &  3.82e+02 &  3.08 &       8 &        19 &  6.81e-03 \\
                     $e^{100 t}$ & \texttt{FD\_4P} & 1.90e-03 &  3.82e+02 &  6.54 &       8 &        23 &  4.97e-03 \\
                     $e^{100 t}$ & \texttt{CD\_4P} & 3.18e-03 &  3.82e+02 &  2.15 &       8 &        20 &  4.60e-04 \\
                     $e^{100 t}$ & \texttt{L2\_CD} & 3.66e-03 &  3.64e-03 &  3.01 &       8 &        19 &  1.18e-02 \\
        $t^{4} + 3 t^{2} - 10 t$ &     \texttt{FD} & 1.58e-02 &  1.48e-02 &  3.55 &       2 &         4 &  7.97e+02 \\
        $t^{4} + 3 t^{2} - 10 t$ &     \texttt{CD} & 4.81e-02 &  5.00e-02 &  2.94 &       2 &         6 &  2.15e+01 \\
        $t^{4} + 3 t^{2} - 10 t$ & \texttt{FD\_3P} & 6.06e-02 &  6.16e-02 &  3.64 &       2 &         7 &  2.38e+02 \\
        $t^{4} + 3 t^{2} - 10 t$ & \texttt{FD\_4P} & 1.54e-01 &  1.39e-01 & 11.90 &       4 &        13 &  1.93e+02 \\
        $t^{4} + 3 t^{2} - 10 t$ & \texttt{CD\_4P} & 9.39e+02 &  4.87e+03 &  1.62 &      16 &        48 &  2.71e-03 \\
        $t^{4} + 3 t^{2} - 10 t$ & \texttt{L2\_CD} & 2.34e-01 &  2.11e-01 &  4.53 &       2 &         7 &  5.38e-03 \\
$10000 t^{3} + 0.01 t^{2} + 5 t$ &     \texttt{FD} & 3.95e-03 &  4.64e-03 &  4.36 &       3 &         5 &  5.46e-02 \\
$10000 t^{3} + 0.01 t^{2} + 5 t$ &     \texttt{CD} & 3.56e-03 &  3.68e-03 &  2.63 &       6 &        18 &  4.02e-02 \\
$10000 t^{3} + 0.01 t^{2} + 5 t$ & \texttt{FD\_3P} & 4.49e-03 &  4.64e-03 &  3.65 &       6 &        15 &  1.17e-02 \\
$10000 t^{3} + 0.01 t^{2} + 5 t$ & \texttt{FD\_4P} & 6.72e+02 &  3.20e+03 & 11.72 &       8 &        22 &  2.37e-06 \\
$10000 t^{3} + 0.01 t^{2} + 5 t$ & \texttt{CD\_4P} & 8.35e+02 &  1.03e+04 &  1.95 &      12 &        28 &  1.99e-07 \\
$10000 t^{3} + 0.01 t^{2} + 5 t$ & \texttt{L2\_CD} & 9.59e+02 &  2.84e+03 &  1.95 &      12 &        27 &  7.49e-08 \\
\hline
\end{tabular}
\end{center}
}
\end{table}

\subsection{Comparison with Mor\'e-Wild Heuristic}

We compare our adaptive forward-difference procedure against the Mor\'e-Wild heuristic \cite{more2012estimating}, as described in Section \ref{subsec:comparison}. 

First, observe that if function $\phi$ has (near) central symmetry at $t$, then Mor\'e-Wild heuristic is very likely to fail. To demonstrate this, we test on $\phi(t) = \sin(t)$ with various value of $t$ close to $0$ and different noise levels $\epsilon_f$. The results are summarized in Table \ref{tab:more_wild_0}. 

\begin{table}[tbhp]
	{\scriptsize
	\caption{Comparison between the Mor\'e-Wild heuristic against our adaptive procedure on function $\phi(t) = \sin(t)$ with various $\epsilon_f$ and $t$. We use ``$--$'' to report the cases where Mor\'e-Wild heuristic fails. Subscript ``MW'' labels the results corresponding to Mor\'e-Wild heuristic, and subscript ``ada'' labels the results corresponding to our adaptive procedure; $\delta$ is the relative error, and $\overline{\delta}$ is the worst-case relative error.}\label{tab:more_wild_0}
	\begin{center}
\begin{tabular}{rr|rrr|rrr}
\hline
   $\epsilon_f$ &            $t$ & $h_\text{MW}$ & $\delta_\text{MW}$ & $\overline{\delta}_\text{MW}$ & $h_\text{ada}$ & $\delta_\text{ada}$ & $\overline{\delta}_\text{ada}$ \\
\hline
1.00e-08 & 1.00e-08 &         $--$ &                 $--$ &                       $--$ &   3.20e-03 &              0.000 &                    0.000 \\
1.00e-08 & 1.00e-06 &         $--$ &                 $--$ &                       $--$ &   3.20e-03 &              0.000 &                    0.000 \\
1.00e-08 & 1.00e-04 &    1.68e-02 &               0.000 &                     0.000 &   3.20e-03 &              0.000 &                    0.000 \\
1.00e-08 & 1.00e-02 &    1.68e-03 &               0.000 &                     0.000 &   2.00e-03 &              0.000 &                    0.000 \\
1.00e-08 & 0.00e+00 &         $--$ &                 $--$ &                       $--$ &   3.20e-03 &              0.000 &                    0.000 \\
1.00e-06 & 1.00e-08 &         $--$ &                 $--$ &                       $--$ &   1.40e-02 &              0.000 &                    0.000 \\
1.00e-06 & 1.00e-06 &         $--$ &                 $--$ &                       $--$ &   1.40e-02 &              0.000 &                    0.000 \\
1.00e-06 & 1.00e-04 &         $--$ &                 $--$ &                       $--$ &   1.40e-02 &              0.000 &                    0.000 \\
1.00e-06 & 1.00e-02 &    1.69e-02 &               0.000 &                     0.000 &   1.40e-02 &              0.000 &                    0.000 \\
1.00e-06 & 0.00e+00 &         $--$ &                 $--$ &                       $--$ &   1.40e-02 &              0.000 &                    0.000 \\
1.00e-04 & 1.00e-08 &         $--$ &                 $--$ &                       $--$ &   5.00e-02 &              0.001 &                    0.004 \\
1.00e-04 & 1.00e-06 &         $--$ &                 $--$ &                       $--$ &   6.50e-02 &              0.003 &                    0.004 \\
1.00e-04 & 1.00e-04 &         $--$ &                 $--$ &                       $--$ &   5.00e-02 &              0.001 &                    0.004 \\
1.00e-04 & 1.00e-02 &         $--$ &                 $--$ &                       $--$ &   5.00e-02 &              0.001 &                    0.005 \\
1.00e-04 & 0.00e+00 &         $--$ &                 $--$ &                       $--$ &   6.50e-02 &              0.000 &                    0.004 \\
1.00e-02 & 1.00e-08 &         $--$ &                 $--$ &                       $--$ &   2.00e-01 &              0.058 &                    0.107 \\
1.00e-02 & 1.00e-06 &    5.20e-01 &               0.067 &                     0.083 &   3.50e-01 &              0.018 &                    0.077 \\
1.00e-02 & 1.00e-04 &         $--$ &                 $--$ &                       $--$ &   3.50e-01 &              0.019 &                    0.077 \\
1.00e-02 & 1.00e-02 &         $--$ &                 $--$ &                       $--$ &   3.50e-01 &              0.029 &                    0.079 \\
1.00e-02 & 0.00e+00 &    6.10e-01 &               0.085 &                     0.094 &   3.50e-01 &              0.032 &                    0.077 \\
\hline
\end{tabular}
\end{center}
}
\end{table}

Next, we test our adaptive procedure and the Mor\'e-Wild heuristic on $\phi(t) = a \cdot \bpa{\exp(b\cdot t) - 1}$ at $t = 0$, with a fixed noise level: $\epsilon_f = \texttt{1E-3}$. We summarize our result in Table \ref{tab:more_wild_1}. Notice that Mor\'e-Wild heuristic may not be able to find a suitable estimation for $h$, in which case a failure is declared. In such cases, we will report the result as ``$--$''.

We can see that when the Mor\'e-Wild heuristic does not declare a failure, it usually outputs an interval $h$ that is quite close to our procedure and produces similar relative error as ours. However, there are many cases where Mor\'e-Wild heuristic fails, while our procedure works very robustly in all cases.

\begin{table}[tbhp]
	{\scriptsize
	\caption{Comparison between the Mor\'e-Wild heuristic against our adaptive procedure on function $\phi(t) = a \cdot (\exp(b\cdot t) - 1)$ with $\epsilon_f = \texttt{1E-3}$ at $t=0$. We use ``$--$'' to report the cases where Mor\'e-Wild heuristic fails. Subscript ``MW'' labels the results corresponding to Mor\'e-Wild heuristic, and subscript ``ada'' labels the results corresponding to our adaptive procedure; $\delta$ is the relative error, and $\overline{\delta}$ is the worst-case relative error.}\label{tab:more_wild_1}
	\begin{center}
\begin{tabular}{rr|rrr|rrr}
\hline
     $a$ &      $b$ & $h_\text{MW}$ & $\delta_\text{MW}$ & $\overline{\delta}_\text{MW}$ & $h_\text{ada}$ & $\delta_\text{ada}$ & $\overline{\delta}_\text{ada}$ \\
\hline
0.01 &   0.01 &         $--$ &                 $--$ &                       $--$ &   4.05e+01 &              0.135 &                    0.727 \\
  0.01 &   0.10 &         $--$ &                 $--$ &                       $--$ &   4.05e+00 &              0.145 &                    0.727 \\
  0.01 &   1.00 &         $--$ &                 $--$ &                       $--$ &   4.43e-01 &              0.275 &                    0.710 \\
  0.01 &  10.00 &    4.68e-02 &               0.177 &                     0.702 &   3.95e-02 &              0.084 &                    0.732 \\
  0.01 & 100.00 &         $--$ &                 $--$ &                       $--$ &   3.95e-03 &              0.123 &                    0.732 \\
  0.10 &   0.01 &         $--$ &                 $--$ &                       $--$ &   1.62e+01 &              0.032 &                    0.209 \\
  0.10 &   0.10 &         $--$ &                 $--$ &                       $--$ &   1.77e+00 &              0.074 &                    0.207 \\
  0.10 &   1.00 &    1.64e-01 &               0.072 &                     0.209 &   1.58e-01 &              0.095 &                    0.210 \\
  0.10 &  10.00 &    1.62e-02 &               0.096 &                     0.209 &   1.58e-02 &              0.055 &                    0.210 \\
  0.10 & 100.00 &         $--$ &                 $--$ &                       $--$ &   1.73e-03 &              0.078 &                    0.207 \\
  1.00 &   0.01 &         $--$ &                 $--$ &                       $--$ &   7.08e+00 &              0.041 &                    0.065 \\
  1.00 &   0.10 &         $--$ &                 $--$ &                       $--$ &   6.32e-01 &              0.034 &                    0.064 \\
  1.00 &   1.00 &    5.26e-02 &               0.029 &                     0.065 &   6.32e-02 &              0.036 &                    0.064 \\
  1.00 &  10.00 &    5.28e-03 &               0.012 &                     0.065 &   6.92e-03 &              0.014 &                    0.064 \\
  1.00 & 100.00 &         $--$ &                 $--$ &                       $--$ &   6.18e-04 &              0.009 &                    0.064 \\
 10.00 &   0.01 &         $--$ &                 $--$ &                       $--$ &   2.53e+00 &              0.012 &                    0.021 \\
 10.00 &   0.10 &    1.76e-01 &               0.011 &                     0.020 &   2.53e-01 &              0.009 &                    0.021 \\
 10.00 &   1.00 &    1.68e-02 &               0.006 &                     0.020 &   1.58e-02 &              0.005 &                    0.021 \\
 10.00 &  10.00 &    1.68e-03 &               0.002 &                     0.020 &   2.47e-03 &              0.014 &                    0.021 \\
 10.00 & 100.00 &         $--$ &                 $--$ &                       $--$ &   2.47e-04 &              0.010 &                    0.021 \\
100.00 &   0.01 &         $--$ &                 $--$ &                       $--$ &   6.32e-01 &              0.003 &                    0.006 \\
100.00 &   0.10 &    5.39e-02 &               0.002 &                     0.006 &   6.32e-02 &              0.004 &                    0.006 \\
100.00 &   1.00 &    5.31e-03 &               0.001 &                     0.006 &   6.92e-03 &              0.001 &                    0.006 \\
100.00 &  10.00 &    5.31e-04 &               0.000 &                     0.006 &   6.18e-04 &              0.001 &                    0.006 \\
100.00 & 100.00 &         $--$ &                 $--$ &                       $--$ &   6.18e-05 &              0.001 &                    0.006 \\

\hline
\end{tabular}
\end{center}
}
\end{table}

\subsection{Finite-Difference L-BFGS}

We present the total number of function evaluations and final optimality gap $\phi(x) - \phi^*$ used by each method in Tables \ref{tab:fd l-bfgs 1e-1}--\ref{tab:cd l-bfgs 1e-7}. 

In general, our adaptive procedure is more robust to different noise levels. Our method only fails when the initial choice of $h$ is not sufficiently small to initially identify the local behavior of the function. This can be seen, for example, with the \texttt{BOX2} example. On the other hand, Mor\'e and Wild's heuristic frequently fails when the noise level is large (for example, with $\epsilon_f = 10^{-1}$). This is due both to the case where $\phi(x) \approx 0$ and hence \eqref{eq:more-wild 2} fails, as well as the case where two iterations are insufficient to find an $h$ that satisfies their conditions. In both cases, we denote a failure case with $^*$. As expected, using a fixed interval is always efficient, but may perform poorly when the Hessian in the function changes, as described in Section \ref{sec:experiments}.

\begin{table}[tbhp]
	{\scriptsize
	 \caption{Total number of function evaluations used and final accuracy achieved by forward-difference L-BFGS method with different choices of the finite-difference interval.}
    \label{tab:fd l-bfgs 1e-1}
    \begin{center}
	% [inline block 0: 8 envs, 56083 chars -> data_tex | \begin{tabular}{lrr|rr|rr|rr} 		\hline...]

\end{center}
}
\end{table}

\end{document}